\numberwithin{equation}{section}
\newcommand{\mb}{\mathbb}
\newtheorem{theorem}{Theorem}
\newtheorem{proposition}{Proposition}
\newtheorem{lemma}{Lemma}
\newtheorem{corollary}{Corollary}
\theoremstyle{definition}
\renewcommand{\mod}[1]{\hspace{-2.9mm}\pmod{#1}}
\newcommand{\ra}{\rightarrow}
\renewcommand{\ss}{\substack}
\newcommand{\mc}{\mathcal}
\newcommand{\ben}{\begin{enumerate}}
\newcommand{\een}{\end{enumerate}}
\newcommand{\eit}{\begin{itemize}}
\newcommand{\beq}{\begin{equation}}
\newcommand{\eeq}{\end{equation}}
\newcommand{\eps}{\epsilon}
\newcommand{\asum}{\sideset{}{^{\ast}}\sum}
\renewcommand{\leq}{\leqslant}
\renewcommand{\geq}{\geqslant}
\newcommand{\llf}{\left\lfloor}
\newcommand{\rrf}{\right\rfloor}
\newcommand{\sg}{\sigma}
\definecolor{red}{rgb}{1,0,0}
\begin{document}

\title{Three conjectures about character sums}
\author{Andrew Granville}

 \address{D{\'e}partment  de Math{\'e}matiques et Statistique,   Universit{\'e} de Montr{\'e}al, CP 6128 succ Centre-Ville, Montr{\'e}al, QC  H3C 3J7, Canada; and  Department of Mathematics, University College London, Gower Street, London WC1E 6BT, England.}
   \email{andrew@dms.umontreal.ca}  
	
\author{Alexander P. Mangerel}
\address{Department of Mathematical Sciences, Durham University, Stockton Road, Durham, DH1 3LE, England}
\email{smangerel@gmail.com}

 \thanks{We would like to thank Dimitris Koukoulopoulos and K. Soundararajan for helpful discussions.
 A.G.~is partially supported by grants from NSERC (Canada). Most of this paper was completed while A.M.~ was a CRM-ISM postdoctoral fellow at the Centre de Recherches Math\'{e}matiques.}  
\begin{abstract}
We establish that three well-known and rather different looking conjectures about Dirichlet characters and their (weighted) sums, (concerning the P\'{o}lya-Vinogradov theorem for maximal character sums,  the maximal admissible range in Burgess' estimate for short character sums, and upper bounds for $L(1,\chi)$ and $L(1+it,\chi)$) are more-or-less ``equivalent''.
We also obtain a new mean value theorem for logarithmically weighted sums of 1-bounded multiplicative functions.
\end{abstract}

\maketitle

\newcommand{\cbar}{\overline{\chi}}
\newcommand{\pbar}{\overline{\psi}}
\newcommand{\sumstar}{\sideset\and ^* \to \sum}

\section{Introduction}

\subsection{Conjectures about weighted sums of Dirichlet characters}
Let $\chi$ be a primitive character mod $q>1$ and
\[
S(\chi,N):= \sum_{n\leq N} \chi(n) \text{ for all } N\geq 1.
\]
The three most widely-used, unconditionally proved estimates about characters sums are:
\begin{itemize}
\item The \emph{P\'{o}lya-Vinogradov} theorem:
\[
M( \chi ) := \max_{1 \leq N\leq q} |S(\chi,N)| \leq c_1 \sqrt{q}\log q
\]
for some explicit $c_1>0$;
\item \emph{Burgess's theorem} (\cite{Bur}, \cite{Hil}): For $N\geq q^{c_2}$ (and $q$ \emph{cube-free})
\[
|S(\chi,N)|  =o( N)
\]
for any $c_2\geq \tfrac 14$; and
\item The Dirichlet $L$-function $L(s,\chi)$ at $s=1$ satisfies
\[
|L(1,\chi)|=\bigg|\sum_{n\geq 1} \frac{\chi(n)}n \bigg| \leq c_3\log q
\]
for some explicit $c_3>0$.  One can also show that for any fixed $T>0$, there exists a constant $c_3(T) >0$ such that if $t\in [-T,T]$ then
$|L(1+it,\chi)| \leq c_3(T)\log q$.
\end{itemize}
The Riemann Hypothesis for $L(s,\chi)$ implies that one can take any $c_1,c_2,c_3 >0$ but this has resisted unconditional proof.  One unlikely but currently plausible `obstruction' to establishing this unconditionally is the possibility  that  $\chi(p)=1$ for all primes $p\leq q^c$, in which case $c_1,c_2,c_3\gg c$, or indeed if $\chi$ is $1$-pretentious for the primes up to $q$.\footnote{``Pretentiousness'' will be defined in section 3.} 

Inspired by connections highlighted in \cite{BoGo, FrGo, Man} 
we show that improving any one of these bounds will, more-or-less, improve the others.

\begin{theorem} \label{thm: main} The following statements are equivalent:
\begin{itemize}
\item   There exists  $\kappa_1>0$ such that there are infinitely many primitive characters $\chi \pmod q$   for which  $M(\chi) \geq \kappa_1\sqrt{q}\log q$;
\item  There exists   $\kappa_3>0$ such that there are infinitely many odd primitive characters $\psi \pmod r$    for which 
$|L(1,\psi)|\geq \kappa_3\log r$.
\end{itemize}
\end{theorem}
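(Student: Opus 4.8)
The plan is to derive both implications from P\'olya's Fourier expansion: for a primitive $\eta\bmod m$ and $1\le N\le m$,
\[
S(\eta,N)=\frac{\tau(\eta)}{2\pi i}\sum_{0<|n|\le m}\frac{\overline{\eta}(n)}{n}\bigl(1-e(-nN/m)\bigr)+O(\log m),
\]
which, after pairing the terms $\pm n$, equals $\tfrac{\tau(\eta)}{\pi i}\sum_{n\le m}\tfrac{\overline{\eta}(n)}{n}\bigl(1-\cos(2\pi nN/m)\bigr)+O(\log m)$ when $\eta$ is odd and $\tfrac{\tau(\eta)}{\pi}\sum_{n\le m}\tfrac{\overline{\eta}(n)}{n}\sin(2\pi nN/m)+O(\log m)$ when $\eta$ is even. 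Since $|\tau(\eta)|=\sqrt m$, in either parity $M(\eta)=\tfrac{\sqrt m}{\pi}\max_{1\le N\le m}\bigl|\sum_{n\le m}\tfrac{\overline{\eta}(n)}{n}w(nN/m)\bigr|+O(\log m)$, where $w$ is the degree-one trigonometric polynomial $1-\cos 2\pi x$ (odd case) or $\sin 2\pi x$ (even case). So everything reduces to understanding when such logarithmically weighted, exponentially twisted sums of the $1$-bounded multiplicative function $\overline{\eta}$ are large.

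\emph{Large $L$-value $\Rightarrow$ large character sum.} Given an odd primitive $\psi\bmod r$ with $|L(1,\psi)|\ge\kappa_3\log r$, take $\eta=\psi$ and average the odd-case identity over $N$. Since $\tfrac1r\sum_{N}\bigl(1-\cos(2\pi nN/r)\bigr)=1$ for $1\le n\le r-1$, the average of $S(\psi,N)$ over $N$ is $\tfrac{\tau(\psi)}{\pi i}\bigl(L(1,\overline{\psi})+O(r^{-1/2}\log r)\bigr)+O(\log r)$, the inner error coming from bounding the tail of $L(1,\overline{\psi})$ by P\'olya--Vinogradov. Hence $M(\psi)\ge\bigl|\mathrm{avg}_N\,S(\psi,N)\bigr|\gg\sqrt r\,|L(1,\psi)|\ge\kappa_3\sqrt r\log r$, so the first statement holds, with $\kappa_1$ a fixed positive multiple of $\kappa_3$ (the family of characters being $\{\psi\}$ itself). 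This direction is routine.

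\emph{Large character sum $\Rightarrow$ large $L$-value.} Suppose $M(\chi)\ge\kappa_1\sqrt q\log q$ for infinitely many primitive $\chi\bmod q$; fix one, let $N$ attain $M(\chi)$, and write $\theta=N/q$ as a reduced fraction $a/\ell$ with $\ell\mid q$ and $\ell\ge2$. By the first paragraph $\bigl|\sum_{n\le q}\tfrac{\overline{\chi}(n)}{n}\,w(n\theta)\bigr|\gg_{\kappa_1}\log q$, so expanding $w$ into its at most three Fourier modes, either $|L(1,\overline{\chi})|\gg_{\kappa_1}\log q$ --- which forces $\chi$ odd, so $\psi=\overline{\chi}$ works --- or $\bigl|\sum_{n\le q}\tfrac{\overline{\chi}(n)}{n}e(n\theta')\bigr|\gg_{\kappa_1}\log q$ for some $\theta'\in\{\theta,-\theta\}$. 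The decisive step, and the one I expect to be the main obstacle, is to deduce from the latter that the denominator $\ell$ of $\theta'$ is bounded \emph{solely in terms of $\kappa_1$}: a logarithmically weighted twisted sum of a $1$-bounded multiplicative function can have size $\asymp\log q$ only for very structured $\theta'$, and this is exactly where the new mean value theorem for logarithmically weighted sums (advertised in the abstract) should enter. The logarithmic weighting is essential --- the classical Hal\'asz/Montgomery--Vaughan theory for \emph{unweighted} exponential sums of multiplicative coefficients only bounds $\ell$ by a power of $q$, which would be useless here. Granting $\ell=O_{\kappa_1}(1)$, I would substitute $e(na/\ell)=\sum_{\xi\bmod\ell}\gamma_\xi\,\xi(n)$ on the integers coprime to $\ell$ (treating the $O_\ell(1)$ residue classes with $(n,\ell)>1$ by a short recursion on the modulus), so that $\sum_{n\le q}\tfrac{\overline{\chi}(n)}{n}e(na/\ell)=\sum_{\xi\bmod\ell}\gamma_\xi\,L(1,\overline{\chi}\xi)\prod_{p\mid\ell}\bigl(1-(\overline{\chi}\xi)(p)/p\bigr)+O_{\kappa_1}(q^{-1/2}\log q)$; when this is fed back into the two-sided P\'olya sum, the pairing of $\pm n$ produces a factor $1-(\overline{\chi}\xi)(-1)$ that annihilates every $\xi$ with $\overline{\chi}\xi$ even, leaving $\max_{\xi\bmod\ell,\ \overline{\chi}\xi\text{ odd}}|L(1,\overline{\chi}\xi)|\gg_{\kappa_1}\log q$. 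Finally, if $\psi$ is the primitive character inducing the winning $\overline{\chi}\xi$, then $\psi$ is odd, its conductor $r$ divides $q$ with $q/r\mid\ell$, so $\log r=\log q+O_{\kappa_1}(1)$ and $|L(1,\psi)|\asymp_{\kappa_1}|L(1,\overline{\chi}\xi)|\gg_{\kappa_1}\log q\gg_{\kappa_1}\log r$; since $r\to\infty$ along the family this gives the second statement for a suitable $\kappa_3>0$. The boundedness of $\ell$ is precisely what makes this last chain lossless: an $\ell$ as large as $(\log q)^{2}$ would only yield $|L(1,\psi)|\gg\log q/\log\log q$, too weak to conclude.
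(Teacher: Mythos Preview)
Your implication ``large $L$-value $\Rightarrow$ large character sum'' is clean and correct: averaging the odd-case P\'olya expansion over $N$ is a nice shortcut, simpler than the paper's general machinery (Proposition~\ref{prop: BoundsOnMchi}) for this particular direction.

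The reverse implication has a genuine structural gap. You write $\theta=N/q=a/\ell$ in lowest terms with $\ell\mid q$ and propose to show that this \emph{exact} denominator $\ell$ is $O_{\kappa_1}(1)$. That is not what holds, and it is not what the paper proves. If, say, $\chi$ is $\xi$-pretentious for some $\xi$ of conductor $3$ but $3\nmid q$, then the maximising $N$ will have $N/q$ close to $1/3$ but never equal to it, and $\ell=q/\gcd(N,q)$ can be as large as $q$. The correct statement is that $\theta$ lies on a \emph{major arc}: by Dirichlet's theorem there is $b/m$ with $m\le R_q$ and $|\theta-b/m|<1/(mR_q)$, and the minor-arc estimate (your ``structured $\theta'$'' intuition, made precise in \eqref{eq: minorArc}) forces $m\le r_q$; the repulsion argument of Lemma~\ref{lem: 2/Pi} then pushes $m$ down to $O_{\kappa_1}(1)$.

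This distinction is not cosmetic. Once you pass to the approximation $b/m$, the sum $\sum_{n\le q}\overline{\chi}(n)e(n\theta)/n$ does not simply become $\sum_{n\le q}\overline{\chi}(n)e(nb/m)/n$: one must first truncate at $N_q:=\min\{q,\,1/|m\theta-b|\}$, as in \eqref{eq: NewSum}, and only then replace $e(n\theta)$ by $e(nb/m)$. After decomposing $e(nb/m)$ into characters mod $m$ you are left with $\bigl|\sum_{n\le N_q}(\chi\bar\xi)(n)/n\bigr|\gg\log q$ for a suitable $\xi$ of bounded conductor, \emph{not} with $|L(1,\chi\bar\xi)|$ directly. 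Bridging from the partial sum at $N_q$ to $|L(1,\psi)|$ is a separate step (the equivalence \eqref{eq:qToNq}), and \emph{this} is where the new logarithmic Hal\'asz-type bound of Proposition~\ref{prop:lambdaOpt}(a) enters: it forces $\mathbb{D}(\psi,1;N_q)\ll 1$, and then a short computation with \eqref{eq:L1Prod} recovers $|L(1,\psi)|\gg\log q$. So the ``new mean value theorem'' you correctly anticipate does appear, but at the passage from $N_q$ to $q$, not in bounding the denominator of $\theta$.
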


 This follows from a more precise connection:

\begin{corollary} \label{cor: MandL} Suppose that $\chi$ is a primitive character mod $q$. We have
$M(\chi) \gg \sqrt{q}\log q$ if and only if  there exists a primitive character 
$\xi \pmod \ell$ with $\xi(-1)=-\chi(-1)$ and $\ell\ll 1$ for which $|L(1,\psi)|\gg \log q$, where $\psi$ is the primitive (odd) character that induces $\chi\bar\xi$.
\end{corollary}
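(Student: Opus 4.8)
The plan is to pass through the Fourier/Gauss-sum expansion of $S(\chi,N)$ and then replace the resulting truncated $L$-value by a genuine $L(1,\cdot)$ using the pretentious machinery. First I would recall the classical identity that expresses a character sum in terms of the character's ``dual'': writing $e_q(a)=e^{2\pi i a/q}$ and using the Gauss sum $\tau(\chi)$, one has $S(\chi,N) = \frac{\tau(\chi)}{2\pi i}\sum_{1\le|n|\le H}\frac{\bar\chi(n)}{n}\bigl(1-e_q(-nN)\bigr) + O(\text{error})$ for a suitable truncation $H$ (say $H$ a small power of $q$, chosen so the tail is negligible), and $|\tau(\chi)|=\sqrt q$. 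Maximizing over $N$, the size of $M(\chi)/\sqrt q$ is governed (up to constants and a $\log$) by how large a partial sum $\sum_{n\le H}\bar\chi(n)/n$ — or more precisely a twisted version $\sum_{n\le H}\bar\chi(n)e(n\theta)/n$ — can be. The main point is that such a logarithmically-weighted partial sum of a $1$-bounded multiplicative function is large precisely when $\bar\chi$ ``pretends'' to be a character $\xi$ of small conductor times $n^{it}$ for small $t$; and since $L(1,\psi)$ for $\psi$ inducing $\chi\bar\xi$ measures exactly that pretentiousness, the two quantities are comparable.

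The key steps, in order, would be: (1) Establish the Gauss-sum expansion above and reduce $M(\chi)\gg\sqrt q\log q$ to the statement that $\max_{\theta}\bigl|\sum_{n\le q}\bar\chi(n)e(n\theta)/n\bigr|\gg \log q$ — here one must handle the $\theta=0$ versus $\theta\ne 0$ cases, but the $\theta$-twist corresponds to multiplying $\chi$ by a character of small modulus (the denominator of a rational approximation to $\theta$), which is exactly where the auxiliary $\xi$ of bounded conductor $\ell$ enters. (2) Invoke the new mean-value theorem for logarithmically weighted sums of $1$-bounded multiplicative functions (advertised in the abstract, hence available) to show that $\bigl|\sum_{n\le q}f(n)/n\bigr|\gg\log q$ for a $1$-bounded multiplicative $f$ forces $f$ to be $n^{it}$-pretentious to distance $O(1)$ for some bounded $t$; apply this with $f=\bar\chi\xi$ (equivalently $\bar\psi$ up to the bad primes), and conclude $\mathbb{D}(\bar\psi,n^{it};q)^2\ll 1$. (3) Convert the pretentiousness bound into $|L(1,\psi)|\gg\log q$ via the standard fact $|L(1,\psi)| \asymp \log q \cdot \exp(-\mathbb{D}(\psi,1;q)^2 + O(1))$ (and its $1+it$ analogue, using that $t$ is bounded to absorb it into the implied constants); run the argument in reverse for the converse direction, building $N$ from the $\theta$ that realizes the large twisted sum.

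The main obstacle I expect is step (1)/(2) done \emph{quantitatively and in both directions}: one must show not merely that a large $M(\chi)$ implies \emph{some} pretentiousness, but that it forces pretentiousness to a character $\xi$ of \emph{absolutely bounded} conductor $\ell\ll 1$ (not just $\ell = q^{o(1)}$), and conversely that such bounded-conductor pretentiousness genuinely produces $M(\chi)\gg\sqrt q\log q$ rather than something slightly smaller. Controlling the twist parameter $\theta$ — showing that only rationals with bounded denominator can contribute a main term of size $\log q$, while all other $\theta$ contribute $o(\log q)$ — is the delicate part, and is presumably where the careful mean-value estimate and a pigeonhole/continued-fraction argument on $\theta$ are needed. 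The passage between $\chi\bar\xi$ and the primitive character $\psi$ inducing it (i.e., controlling the finitely many Euler factors at primes dividing $\ell$) is routine by comparison, contributing only $O(1)$ factors since $\ell\ll 1$.
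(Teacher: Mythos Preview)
Your outline matches the paper's proof. Step (1) is packaged as Proposition~\ref{prop: BoundsOnMchi} (built from the P\'olya expansion \eqref{eq_PFE}, the major/minor arc split \eqref{eq: minorArc}--\eqref{eq: NewSum}, and Proposition~\ref{prop:ChiLarge}), yielding $M(\chi) \asymp \tfrac{\sqrt{q\ell}}{\phi(\ell)} \max_{N\leq q}\big|\sum_{n\leq N}(\chi\bar\xi)(n)/n\big|$ for a primitive $\xi$ of conductor $\ell \leq (\log q)^{2-2\Delta+o(1)}$; the trivial bound on the inner sum then forces $\ell \ll 1$, just as you anticipate in your ``main obstacle'' paragraph. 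Steps (2)--(3) become the equivalence \eqref{eq:qToNq}, and the passage from $\chi\bar\xi$ to the primitive $\psi$ is \eqref{eq: passtoPrim}.

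One point is looser than it should be. In step (2) you deduce $\mathbb{D}(\bar\psi,n^{it};q)^2 \ll 1$ for some bounded $t$, and in step (3) propose ``using that $t$ is bounded to absorb it into the implied constants''. But bounded $t$ yields only $|L(1+it,\psi)| \gg \log q$, which does not in general imply $|L(1,\psi)| \gg \log q$. The paper instead invokes the log-weighted mean-value bound in the form \eqref{eq:GS2b}, which controls $\sum_{n\leq x} f(n)/n$ directly in terms of $\mathbb{D}(f,1;x)^2$ (distance to the constant function $1$, with no $t$) and exponent $\lambda > \tfrac12$. From $\mathcal L := \big|\sum_{n\leq N_q}\psi(n)/n\big| \gg \log q$ one extracts $e^{-\mathbb{D}(\psi,1;N_q)^2} \gg (\mathcal L/\log N_q)^{1/\lambda} \gg (\mathcal L/\log N_q)^2$, and then $|L(1,\psi)| \gg \mathcal L^2/\log q$ via \eqref{eq:L1Prod} together with the trivial bound $\mathbb{D}(\psi,1;N_q,q)^2 \leq 2\log(\tfrac{\log q}{\log N_q}) + O(1)$. (Equivalently, Proposition~\ref{prop: logsum} shows that a log-weighted sum of size $\gg \log x$ forces the minimizing $t$ to satisfy $|t| \ll 1/\log x$, whence $\mathbb{D}(f,1;x) = \mathbb{D}(f,n^{it};x) + O(1)$; but this must be argued, not asserted.) Note also that the maximizing truncation $N_q$ need not be $q$ --- the paper gives an explicit example after \eqref{eq:qToNq} --- so the argument must go through a variable $N_q \in [R_q,q]$ rather than the full sum to $q$ as you wrote; this is why the factor $(\log N_q/\log q)^2$ appears and why the exponent $1/\lambda < 2$ matters.
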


In other words we prove that if $M(\chi) \gg \sqrt{q}\log q$ then $\chi$ is $\xi$-pretentious for some $\xi$ of bounded conductor, and we will also establish a converse theorem.

  Next we relate large $S(\chi, N)$-values with large $L(1+it,\chi)$-values:

\begin{theorem} \label{thm: main2} The following statements are equivalent:
\begin{itemize}
\item There exists  $\kappa_2>0$ such that there are infinitely many primitive characters $\chi \pmod q$  
for which there is an integer $N\in [q^{\kappa_2},q]$ such that
$|S(\chi,N)|  \geq \kappa_2 N$;
\item  There exists   $\kappa_3>0$ and $T>0$ such that there are infinitely many primitive characters $\chi \pmod q$ for which there exists $t\in [-T,T]$  such that  $|L(1+it,\chi)|\geq \kappa_3\log q$.
\end{itemize}
\end{theorem}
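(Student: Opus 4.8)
The plan is to establish the two implications separately, working character by character (so that infinitude of the family is automatically preserved); I fix a primitive $\chi \bmod q$ and aim for the two pointwise statements. Throughout, write $\mathbb{D}(f,g;x)^2 := \sum_{p\le x}p^{-1}\bigl(1-\mathrm{Re}\, f(p)\overline{g(p)}\bigr)$ for the pretentious distance.

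\emph{From $|L(1+it_0,\chi)|$ large to a large short sum.} This direction I expect to be soft. Since $\chi$ is primitive, hence non-principal, $S(\chi,q)=0$, so $S(\chi,u)=S(\chi,\, u - q\lfloor u/q\rfloor)$ and therefore $|S(\chi,u)|\le M(\chi)\ll \sqrt q\log q$ for \emph{all} $u\ge 1$ by Pólya--Vinogradov. Abel summation (the boundary term vanishing at infinity) gives $L(1+it_0,\chi) = (1+it_0)\int_1^\infty S(\chi,u)u^{-2-it_0}\,\d u$, so that if $|L(1+it_0,\chi)|\ge\kappa_3\log q$ and $|t_0|\le T$ then
\[
\kappa_3\log q \;\le\; (1+T)\Bigl(\int_1^{q^{\kappa}} + \int_{q^{\kappa}}^{q} + \int_q^\infty\Bigr)\frac{|S(\chi,u)|}{u^2}\,\d u .
\]
Using $|S(\chi,u)|\le u$ the first integral is $\le\kappa\log q$, and by the bound above the third is $\ll T\log q/\sqrt q = o(\log q)$. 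Choosing $\kappa\asymp \kappa_3/(1+T)$ appropriately (and $\kappa<1$) forces $\int_{q^\kappa}^q |S(\chi,u)|u^{-2}\,\d u \gg_{\kappa_3,T}\log q$; since $[q^\kappa,q]$ has logarithmic length $(1-\kappa)\log q$, the pigeonhole principle produces an integer $N\in[q^\kappa,q]$ with $|S(\chi,N)|\gg_{\kappa_3,T} N$, and one takes $\kappa_2$ to be a suitable small multiple of $\kappa_3/(1+T)$.

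\emph{From a large short sum to $|L(1+it_0,\chi)|$ large.} Suppose $|S(\chi,N)|\ge\kappa_2 N$ with $N\in[q^{\kappa_2},q]$. First I would invoke a quantitative Halász theorem: since $\tfrac1N|S(\chi,N)|\ge\kappa_2$, there is a frequency $t_0$, $|t_0|\le\log N$, with $M:=\mathbb{D}(\chi,n^{it_0};N)^2\ll_{\kappa_2}1$; moreover the corresponding structure statement shows $\tfrac1N S(\chi,N)$ equals, up to $o(1)$, a constant of modulus $\ll_M 1$ times $N^{it_0}/(1+it_0)$, whence $\kappa_2\le\tfrac1N|S(\chi,N)|\ll_M (1+|t_0|)^{-1}$ and so $|t_0|\le T$ for some $T=T(\kappa_2)$. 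Since the primes of $(N,q]$ occupy a range of bounded logarithmic measure ($\sum_{N<p\le q}p^{-1}\le\log(1/\kappa_2)+o(1)$), pretentiousness persists up to $q$:
\[
\mathbb{D}(\chi,n^{it_0};q)^2 \;=\; M + \sum_{N<p\le q}\frac{1-\mathrm{Re}(\chi(p)p^{-it_0})}{p} \;\le\; M + 2\log(1/\kappa_2) + o(1) \;=:\; C_1 .
\]
Exactly as in the first part, Abel summation together with $S(\chi,q)=0$ and Pólya--Vinogradov reduce the $L$-value to a short Dirichlet polynomial, $L(1+it_0,\chi) = \sum_{n\le q}\chi(n)n^{-1-it_0} + O_T(\log q/\sqrt q)$; crucially this bypasses any need to estimate the tail $\sum_{p>q}\chi(p)p^{-1-it_0}$. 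Writing $g:=\chi\cdot n^{-it_0}$ (so $\mathbb{D}(g,1;q)^2\le C_1$), I would then apply the new mean value theorem for logarithmically weighted sums to obtain the \emph{lower} bound $\bigl|\sum_{n\le q}g(n)/n\bigr|\gg_{C_1}\log q$. Combining the last two displays yields $|L(1+it_0,\chi)|\gg_{\kappa_2}\log q$ with $|t_0|\le T(\kappa_2)$, as required.

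The first direction is purely soft (partial summation plus a pigeonhole). In the second, the genuinely new ingredient — and the hard part — is the final step: classical tools such as Halász's theorem yield only \emph{upper} bounds for sums of $1$-bounded multiplicative functions, whereas here one needs the matching \emph{lower} bound, namely that bounded pretentious distance to $1$ really forces $\sum_{n\le q}g(n)/n$ to have size $\asymp\log q$; this is precisely the new mean value theorem the paper must establish. (The naive alternative, estimating $|L(1+it_0,\chi)|$ through its Euler product, would instead demand unconditional control of $\sum_{p>q}\chi(p)p^{-1-it_0}$, a prime-number-theorem-type input one wants to avoid — which is why passing to the \emph{truncated} series first is important.) A secondary, more routine obstacle is pinning the frequency down to $|t_0|\le T(\kappa_2)$: here one uses that largeness of the \emph{unweighted} sum $S(\chi,N)$ already penalises large $|t_0|$ through the oscillation of $n^{it_0}$, read off from the structural form of Halász's theorem.
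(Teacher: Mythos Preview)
Your first direction is correct and matches the paper's argument (essentially inequality \eqref{eq: Lbd}). The second direction, however, has a real gap. You assert that the paper's ``new mean value theorem for logarithmically weighted sums'' supplies the \emph{lower} bound $\bigl|\sum_{n\le q} g(n)/n\bigr|\gg_{C_1}\log q$ once $\mathbb D(g,1;q)^2\le C_1$; but Proposition~\ref{prop:lambdaOpt} is an \emph{upper} bound (part (a)), and part (b) merely exhibits one extremal function showing that the exponent $\lambda$ is sharp. No general lower bound of the type you invoke is proved anywhere in the paper, and in fact for $1$-bounded multiplicative $g$ it is false: take $g(2^k)=-1$ for all $k\ge1$ and $g(p^k)=1$ for odd $p$, so that $\mathbb D(g,1;x)^2=1$ while $\sum_{n\le x} g(n)/n = O(1)$. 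Restricting to completely multiplicative $g$ rules out this example, but a lower bound in that class is still a nontrivial statement not available from Hal\'asz-type machinery alone.

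What the paper actually does in this direction is precisely the route you dismiss as ``naive'': after Hal\'asz gives $\mathbb D(\chi,n^{it};q)\ll 1$ for some $|t|\le T$ (with $T$ chosen as a fixed constant from the outset, so no structural asymptotic is needed to bound $|t_0|$), it invokes the Euler-product relation $|L(1+it,\chi)|\asymp (\log q)\,e^{-\mathbb D(\chi,n^{it};q)^2}$ of \eqref{eq:L1Prod}. That relation does require arithmetic input---the prime-number theorem for $L(s,\chi)$ in the form \eqref{eq:PNTAP}---and is only valid for non-exceptional $\chi$; the exceptional case is handled separately by Lemma~\ref{lem: excepChi}, which shows that for such $\chi$ neither hypothesis of the theorem can hold, so the equivalence is vacuous there. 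Thus the PNT-type input and the Siegel-zero dichotomy are genuinely used, and your attempt to sidestep them by appealing to a pretentious lower bound does not go through.
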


If we restrict attention here to characters of bounded order then one can take $t=0$.
The precise connection is given in the following result.

 \begin{proposition} \label{prop: LandB} Fix $c>0$. Let $\chi$ be a primitive  character mod $q$.
There exists $t\in \mathbb R$ with $|t|\ll 1$ for which $|L(1+it,\chi)| \geq c\log q$
if and only if there exist $\kappa = \kappa(c) > 0$ and $x\in [q^\kappa,q]$ for which $|S(\chi,x)| \gg_c  x$.
If $\chi^k=\chi_0$ for some $k\ll 1$  then we may take  $t=0$.
\end{proposition}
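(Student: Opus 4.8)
The plan is to set up a dictionary relating the partial sums $S(\chi,u)$, their logarithmically weighted counterparts, and the value $L(1+it,\chi)$: partial summation handles the analytic side, while the pretentious toolkit (Hal\'asz's theorem, its quantitative converse, and the ``repulsion'' inequality for the distances $\mathbb{D}(f,g;x)^{2}=\sum_{p\le x}\tfrac1p\bigl(1-\operatorname{Re}(f(p)\overline{g(p)})\bigr)$), together with a logarithmically weighted mean value theorem, handles the arithmetic side. The only analytic identity used is
\[
L(1+it,\chi)=(1+it)\int_{1}^{\infty}\frac{S(\chi,u)}{u^{2+it}}\,\d u ,
\]
which follows from partial summation because $|S(\chi,u)|\le q$ for all $u\ge1$ (periodicity of $\chi$ and $\sum_{n\le q}\chi(n)=0$); the same computation gives $L(1+it,\chi)=\sum_{n\le q}\chi(n)n^{-1-it}+O(1)$ whenever $|t|\ll1$.

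\emph{Large $L$ gives large $S$.} Suppose $|L(1+it,\chi)|\ge c\log q$ with $|t|\ll1$, so $|1+it|\ll1$. In the identity above the range $u>q$ contributes $\le q\int_{q}^{\infty}u^{-2}\,\d u=1$, and the range $1\le u\le q^{\kappa}$ contributes at most $\int_{1}^{q^{\kappa}}u^{-1}\,\d u=\kappa\log q$ since $|S(\chi,u)|\le u$; choosing $\kappa=\kappa(c)$ small enough we obtain
\[
\int_{q^{\kappa}}^{q}\frac{|S(\chi,u)|}{u^{2}}\,\d u\ \gg_{c}\ \log q .
\]
Since $\int_{q^{\kappa}}^{q}u^{-1}\,\d u\le\log q$, the inequality $|S(\chi,u)|\le\delta u$ cannot hold throughout $[q^{\kappa},q]$ for a suitably small $\delta=\delta(c)$, so some $x\in[q^{\kappa},q]$ satisfies $|S(\chi,x)|\gg_{c}x$.

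\emph{Large $S$ gives large $L$.} Suppose $|S(\chi,x_{0})|\ge\varepsilon x_{0}$ for some $x_{0}\in[q^{\kappa},q]$. By Hal\'asz's theorem in the refined form that isolates the main term $x_{0}^{it_{0}}/(1+it_{0})$ of the mean value (equivalently, by the Lipschitz estimate of Granville--Soundararajan applied to $\chi$ twisted by the minimising frequency $t_{0}$), the bound $|S(\chi,x_{0})/x_{0}|\ge\varepsilon$ forces $1/|1+it_{0}|\gg\varepsilon$, hence $|t_{0}|\ll_{\varepsilon}1$, and also $\mathbb{D}(\chi,n^{it_{0}};x_{0})^{2}\ll_{\varepsilon}1$. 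Because $x_{0}\ge q^{\kappa}$, the primes in $(x_{0},q]$ add only $\sum_{x_{0}<p\le q}p^{-1}\le\log(1/\kappa)+O(1)$, so $\mathbb{D}(\chi,n^{it_{0}};q)^{2}\ll_{\varepsilon,\kappa}1$. Put $g:=\chi\cdot n^{-it_{0}}$, a $1$-bounded multiplicative function with $\mathbb{D}(g,1;q)\ll_{\varepsilon,\kappa}1$; by the repulsion estimate $\mathbb{D}(1,n^{iw};q)^{2}=\log(|w|\log q)+O(1)$ (valid for $1/\log q\le|w|\le1$) every resonating frequency of $g$ lies within $O(1/\log q)$ of $0$, so no genuine oscillation is present and the logarithmically weighted mean value theorem gives $\bigl|\sum_{n\le q}g(n)/n\bigr|\gg_{\varepsilon,\kappa}\log q$. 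Since $\sum_{n\le q}g(n)/n=\sum_{n\le q}\chi(n)n^{-1-it_{0}}=L(1+it_{0},\chi)+O(1)$, we conclude $|L(1+it_{0},\chi)|\gg_{\varepsilon,\kappa}\log q$ with $|t_{0}|\ll_{\varepsilon}1$, which is what was wanted.

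\emph{Bounded order and the main obstacle.} If $\chi^{k}=\chi_{0}$ with $k\ll1$, first note that the hypothesis already forces $\sum_{p\mid q}p^{-1}\ll_{\varepsilon}1$, since otherwise $\mathbb{D}(\chi,n^{it};x_{0})^{2}\ge\sum_{p\mid q,\,p\le x_{0}}p^{-1}$ would be unbounded and $S(\chi,x_{0})=o(x_{0})$ by Hal\'asz. Raising $\mathbb{D}(\chi,n^{it_{0}};q)\ll1$ to the $k$th power gives $\mathbb{D}(\chi_{0},n^{ikt_{0}};q)\ll_{k}1$, whence $\mathbb{D}(1,n^{ikt_{0}};q)^{2}\le\mathbb{D}(\chi_{0},n^{ikt_{0}};q)^{2}+\sum_{p\mid q}p^{-1}\ll1$; by the repulsion estimate this forces $|t_{0}|\ll1/\log q$, and then $\mathbb{D}(\chi,1;q)\le\mathbb{D}(\chi,n^{it_{0}};q)+\mathbb{D}(1,n^{it_{0}};q)\ll1$, so the previous paragraph applies with frequency $0$ and gives $|L(1,\chi)|\gg\log q$; thus one may take $t=0$. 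The main obstacle throughout is the lower bound $\bigl|\sum_{n\le q}g(n)/n\bigr|\gg_{C}\log q$ for a $1$-bounded multiplicative $g$ with $\mathbb{D}(g,1;q)\le C$ -- the matching upper bound being immediate -- together with the clean extraction of the bounded frequency $t_{0}$ from the converse of Hal\'asz's theorem: a mean value statement of exactly the right strength, uniform in $q$ and with the correct dependence on $C$, is the purpose of the new logarithmically weighted mean value theorem, and securing it is where the substantive work lies.
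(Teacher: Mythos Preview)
Your ``large $L$ gives large $S$'' direction via partial summation is correct and is exactly the paper's argument through \eqref{eq: Lbd}. The bounded-order addendum is also essentially the paper's: bound $\sum_{p\mid q}1/p$, raise to the $k$th power, and use \eqref{eq: Dfornit} to force $|t_0|\ll 1/\log q$.

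The genuine gap is in ``large $S$ gives large $L$''. You correctly apply Hal\'asz to get $\mathbb{D}(\chi,n^{it_0};q)\ll_{\varepsilon,\kappa}1$, but you then claim that ``the logarithmically weighted mean value theorem'' yields the \emph{lower} bound $\bigl|\sum_{n\le q}g(n)/n\bigr|\gg_C\log q$ for $g=\chi n^{-it_0}$ whenever $\mathbb{D}(g,1;q)\le C$. This is not what Propositions~\ref{prop:lambdaOpt} and~\ref{prop: logsum} say: both are \emph{upper} bounds for $\sum_{n\le x}f(n)/n$, and neither has a converse of the type you invoke. Your closing paragraph even identifies this lower bound as ``the main obstacle'' and asserts that securing it ``is the purpose of the new logarithmically weighted mean value theorem'' --- but the new theorem does the opposite job, and no general lower bound of this shape for $1$-bounded multiplicative $g$ is proved (or used) anywhere in the paper.

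The paper closes the implication differently, exploiting structure specific to Dirichlet $L$-functions rather than a soft mean-value converse. First, exceptional $\chi$ are dispatched by Lemma~\ref{lem: excepChi}: for such $\chi$ one has both $|L(1+it,\chi)|=o(\log q)$ and $|S(\chi,x)|=o(x)$, so the equivalence is vacuous. For non-exceptional $\chi$ the paper uses the prime number theorem for $\chi$ (equation \eqref{eq:PNTAP}) to truncate the Euler product at $q$, obtaining the two-sided estimate
\[
|L(1+it,\chi)|\ \asymp\ (\log q)\,e^{-\mathbb{D}(\chi,n^{it};q)^2}
\]
of \eqref{eq:L1Prod}; the lower bound $|L(1+it_0,\chi)|\gg_\kappa\log q$ then follows immediately from $\mathbb{D}(\chi,n^{it_0};q)\ll 1$. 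That Euler-product step, together with the separate treatment of exceptional characters, is the actual input here; your proposal substitutes for it a lower-bound mean-value statement that the cited tools do not provide.
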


In other words we prove that if $|S(\chi,N)| \gg  N$ for some $N>q^\kappa$ and $\kappa > 0$ then $\chi$ is $n^{it}$-pretentious  for some bounded real number $t$, and we will also establish a converse theorem.

We can combine these results: If $M(\chi) \gg \sqrt{q}\log q$ and $|S(\chi,N)| \gg  N$ for some $N>q^\kappa$,
then $\chi$ is both $\xi$-pretentious and $n^{it}$-pretentious, which implies that $\xi$ is $n^{it}$-pretentious, where $\xi$ is a primitive character of bounded conductor. We will show that this implies  $\xi=1$ and $t=0$, so that 
$\chi$ is an odd character and $1$-pretentious for the primes up to $q$ (see Corollary \ref{cor: allLarge}). 
Therefore such a putative character is  the only obstruction to improving at least one of our three famous results unconditionally (that is, being able to take any $c_1>0$ in  the P\'{o}lya-Vinogradov theorem, or
being able to take any $c_2>0$ in Burgess's theorem, or being able to take any $c_3>0$ in bounds for  $L(1,\chi)$).

Other new results on this topic will be discussed in section \ref{sec: DifferentSums}.

\subsection{Logarithmic averages of multiplicative functions}  We prove our results on  sums of characters by viewing characters as examples of  multiplicative functions that take their values on the unit disk
$\mb{U} := \{z \in \mb{C} : |z| \leq 1\}$.   Hal\'asz's theorem, which we will discuss in detail below, bounds the mean value of $f(n)$ for $n$ up to $x$ in terms of how ``pretentious'' $f$ is. In particular, if $f$ is real-valued then
Hall and Tenenbaum \cite{HT} showed that 
 \begin{align}\label{eq:HTbd}
\sum_{n \leq x} f(n)  \ll  x\, e^{-\tau \mathbb{D}(f,1;x)^2 } \text{ where } 
\mathbb{D}(f,1;x)^2=\sum_{p\leq x} \frac{1-\text{Re}(f(p))}p,
\end{align}
and 
\[
\tau=0.3286\ldots=-\cos \theta \text{ where } \theta\in (0,\pi) \text{ satisfies }
\sin \theta - \theta \cos \theta =\frac \pi 2.
\]
 They gave an example where one attains equality in \eqref{eq:HTbd} (up to the inexplicit constant).  

We give an analogous result for logarithmic averages of the form $\sum_{n\leq x} f(n)/n$ though, as discovered in \cite{GSPret}, we do not need to restrict attention to real-valued $f$. Here we let $\lambda\in \mathbb R$ be such that 
\[
\int_0^{1} |e(\theta)-\lambda| d\theta = 2-\lambda,
\]
so that $\lambda=0.8221\dots$.

 \begin{proposition} \label{prop:lambdaOpt}
Let $f:\mb{N} \ra \mb{U}$ be a multiplicative function. \\
\noindent {\rm (a)}\ We have
\begin{equation} \label{eq:GS2b}
\sum_{n \leq x} \frac{f(n)}{n} \ll (\log x)(1+\mathbb D(f,1;x)^2) e^{ -\lambda \, \mathbb D(f,1;x)^2 } + \log\log x.
\end{equation} 
{\rm (b)}\ The exponent $\lambda$ is ``best possible'' in a result of this kind, since there exists
 a multiplicative function $f: \mb{N} \ra \mb{U}$ such that
\begin{equation} \label{eq:GS25}
\bigg|\sum_{n \leq x} \frac{f(n)}{n}\bigg| \asymp (\log x) e^{-\lambda \, \mb{D}(f,1;x)^2}.
\end{equation}
\end{proposition}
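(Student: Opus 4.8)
The plan is to treat parts (a) and (b) separately. For part (a), the natural route is to leverage the (unweighted) Halász-type machinery: by partial summation,
\[
\sum_{n\leq x}\frac{f(n)}{n} = \int_1^x \frac{1}{t}\,\d\Big(\sum_{n\leq t} f(n)\Big) = \frac{1}{x}\sum_{n\leq x}f(n) + \int_1^x \frac{1}{t^2}\Big(\sum_{n\leq t}f(n)\Big)\d t,
\]
so it suffices to control $\frac{1}{t}\sum_{n\leq t}f(n)$ uniformly. Halász's theorem gives $\frac{1}{t}\sum_{n\leq t}f(n) \ll (1+M_t)e^{-M_t} + \frac{1}{\log t}$, where $M_t := \min_{|y|\leq\log t}\mathbb{D}(f,n^{iy};t)^2$. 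The subtlety is that for the \emph{logarithmic} average one can do better than inserting this bound directly and integrating: as in \cite{GSPret}, one wants the exponent $\lambda$ (coming from the extremal problem $\int_0^1|e(\theta)-\lambda|\,\d\theta = 2-\lambda$) rather than the constant $\tau$ from \eqref{eq:HTbd}. I would therefore split the range of $t$ according to whether the minimizing $y=y(t)$ in $M_t$ is close to $0$ or not, handle the ``twisted'' contribution $n^{iy}$ by the standard argument that a nonzero $y$ forces cancellation over dyadic blocks, and for the main term reduce to bounding $\int \frac{1}{t}e^{-\mathbb{D}(f,1;t)^2}\d t$ against $(\log x)(1+\mathbb{D}(f,1;x)^2)e^{-\lambda\mathbb{D}(f,1;x)^2}$. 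Since $\mathbb{D}(f,1;t)^2$ is nondecreasing in $t$ and grows slowly (like $\log\log$), this last step is a one-variable convexity/monotonicity estimate; the $\log\log x$ term absorbs the $t$ near $1$ and the contribution of the $\frac{1}{\log t}$ error.

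For part (b), the strategy is an explicit construction. Guided by the extremal problem defining $\lambda$, I would take $f(p) = e(\theta_0)$ for all primes $p$ up to $x$, where $e(\theta_0)$ is (a point near) the optimizer in $\int_0^1|e(\theta)-\lambda|\,\d\theta = 2-\lambda$; more precisely one chooses $f$ supported on primes in a suitable range so that $\mathbb{D}(f,1;x)^2 = \sum_{p\leq x}\frac{1-\mathrm{Re}(f(p))}{p}$ equals a prescribed quantity, say $\rho$, and then one computes $\sum_{n\leq x}f(n)/n$ via the Euler-product heuristic $\sum_{n\leq x}\frac{f(n)}{n}\approx \prod_{p\leq x}\big(1-\tfrac{f(p)}{p}\big)^{-1}\approx (\log x)^{f(p)-1}$-type expressions, made rigorous by a mean-value theorem for logarithmically weighted sums (the ``new mean value theorem'' advertised in the abstract, which I may assume). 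Optimizing the phase to make $|\sum f(n)/n|$ as large as possible relative to $(\log x)e^{-\rho}$ is exactly the variational problem that produces $\lambda$, so the constant in the exponent comes out as $\lambda$ and not smaller.

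I expect the main obstacle to be part (a): specifically, pushing the constant in the exponent down to the optimal $\lambda$ rather than settling for the weaker $\tau$ or some other admissible constant. This requires carefully exploiting the extra averaging in $\int \frac{\d t}{t}$ — one must not bound $|\sum_{n\leq t}f(n)|$ pointwise by its worst case but rather integrate a family of Halász bounds and recognize the resulting integral as governed by the $\int_0^1|e(\theta)-\lambda|$ extremal problem. Handling the ``pretentious to $n^{iy}$ with $y\neq 0$'' case uniformly in $t$, and checking that such twists cannot conspire to beat the $\lambda$-exponent, is the technically delicate point; the factor $(1+\mathbb{D}(f,1;x)^2)$ in \eqref{eq:GS2b} is precisely the slack one needs to make the integration step go through, so getting that polynomial factor right (rather than something larger) is where the care lies.
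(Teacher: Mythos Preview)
Your approach to part (a) differs substantially from the paper's. You propose partial summation followed by integrating pointwise Hal\'asz bounds for $\tfrac{1}{t}\sum_{n\leq t}f(n)$. The paper instead applies the $\kappa=2$ generalized Hal\'asz theorem \eqref{eq: genHal} directly to the convolution $1\ast f$, using the identity $\sum_{m\leq x}(1\ast f)(m) = x\sum_{n\leq x}\tfrac{f(n)}{n}+O(x)$. This yields a single bound in terms of $|\zeta(s)F(s)/s|$ at $s=1+\tfrac{1}{\log x}+it$, and the constant $\lambda$ then emerges from the pointwise prime-sum inequality
\[
\sum_{y_t<p\leq x}\frac{\text{Re}\big(f(p)(p^{-it}-\lambda)\big)}{p}\leq \sum_{y_t<p\leq x}\frac{|p^{-it}-\lambda|}{p}=(2-\lambda)\log\Big(\tfrac{\log x}{\log y_t}\Big)+O(1),
\]
which rearranges to $\mathbb{D}(f,n^{it};x)^2+\log(\tfrac{\log x}{\log y_t})\geq \lambda\,\mathbb{D}(f,1;x)^2+O(1)$. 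Your route might be salvageable, but you have not identified this inequality, and without it there is no mechanism by which $\lambda$ (rather than some weaker constant) would appear from integrating Hal\'asz bounds in $t$.

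There is a genuine gap in part (b). Your proposed construction $f(p)=e(\theta_0)$ constant in $p$ does \emph{not} work. For such $f$ one has $\mathbb{D}(f,1;x)^2=(1-\text{Re}\,e(\theta_0))\log\log x+O(1)$ while $|\sum_{n\leq x}f(n)/n|\asymp (\log x)^{\text{Re}\,e(\theta_0)}$; matching this to $(\log x)^{1-\lambda(1-\text{Re}\,e(\theta_0))}$ forces $\text{Re}\,e(\theta_0)=1$, which is trivial. The equation $\int_0^1|e(\theta)-\lambda|\,d\theta=2-\lambda$ is the \emph{definition} of $\lambda$, not an optimization with a free variable $\theta_0$. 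The paper's example is instead
\[
f(p)=\frac{p^{it}-\lambda}{|p^{it}-\lambda|}=g\Big(\tfrac{t\log p}{2\pi}\Big),\qquad g(u)=\frac{e(u)-\lambda}{|e(u)-\lambda|},
\]
for small fixed $t>0$; this choice varies with $p$ and is precisely what makes the inequality displayed above an equality. The asymptotic $|\sum_{n\leq x}f(n)/n|\asymp t^{-1}(t\log x)^{g_1-1}$ is obtained via a nontrivial contour-integration theorem (Theorem~\ref{lem:asympForft}) for such ``periodic-in-$\log p$'' multiplicative functions, together with the Fourier-coefficient identity $g_1-2=\lambda(g_0-1)$. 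Your Euler-product heuristic $\sum f(n)/n\approx (\log x)^{f(p)-1}$ presupposes $f(p)$ constant and is not the relevant computation here.
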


The $\lambda$ in the bound \eqref{eq:GS2b}  improves on the $\tfrac 12$ in the bound given in Lemma 4.3 of \cite{GSPret}.

We deduce Proposition \ref{prop:lambdaOpt}(b) from Theorem \ref{lem:asympForft} which establishes asymptotics for 
$\sum_{n \leq N} f(n)/n$ for a class of multiplicative functions $f:\mb{N} \ra \mb{U}$ for which 
$f(p)=g(\tau \log p)$ for each prime $p$ for some fixed small real $\tau$, where $g(t)$ is a $1$-periodic function with ``well-behaved'' Fourier coefficients.

More details, as well as other new results on this topic, will be discussed in section \ref{sec: HalaszBeyond}.

\section{Connections between different sums of characters} \label{sec: DifferentSums}

\subsection{Large character sums} \label{subsec LargeChisum}
P\'{o}lya gave the following Fourier expansion (see e.g., Lemma 1 of \cite{MV}) for character sums: for $\alpha \in [0,1)$
\begin{align} \label{eq_PFE}
\sum_{n \leq \alpha q} \chi(n) = \frac{ g(\chi) }{ 2\pi i } \sum_{1 \leq |n| \leq q} \frac{ \bar{\chi}(n) }{n} \bigg( 1-e(-n\alpha) \bigg) + O(\log q),
\end{align}
where $e(t) := e^{2\pi i t}$ for $t \in \mb{R}$ and $g(\chi) := \sum_{a \pmod{q}} \chi(a) e( \tfrac aq )$ is the Gauss sum. When $\chi$ is primitive we know that  $|g(\chi)|=\sqrt{q}$ and 
\begin{equation}\label{eq: L1,chi}
\sum_{1 \leq |n| \leq q} \frac{ \bar{\chi}(n) }{n} = (1-\chi(-1))L(1,\bar{\chi}) +o(1),
\end{equation}
and so to estimate the left-hand side of \eqref{eq_PFE} for any $\alpha$ we are left to estimate the sums 
\[
\sum_{n\leq q}  \frac{\bar{\chi}(n)e(\pm n\alpha)}n.
\]
 Note that \eqref{eq: L1,chi} is large if and only if $\chi$ is an odd character and $L(1,\chi)$ is large.

Fix $\tfrac 2\pi<\Delta<1$ and let $R_q:=\exp( \tfrac {(\log q)^\Delta }{\log\log q} )$.
For any $\alpha\in [0,1)$ we may obtain an approximation $|\alpha-\tfrac bm|<\tfrac 1{mR_q}$, with $(b,m)=1$ and $m\leq R_q$, by Dirichlet's theorem.

If $r_q:= (\log q)^{2-2\Delta}(\log\log q)^4<m\leq R_q$ then we say that $\alpha$ is on a \emph{minor arc}. By straightforward modifications to the proof of Lemma 6.1 of \cite{GSPret}, for such $\alpha$ we get
\begin{align}\label{eq: minorArc}
\sum_{n\leq q} \frac{\chi(n) e(n\alpha)}n \ll \log \log q + \frac{(\log r_q)^{3/2}}{\sqrt{r_q}} \log q + \log R_q = o( (\log q)^\Delta ).
\end{align}
If $m\leq r_q$ then we say that $\alpha$ is on a \emph{major arc}.  Let $N:=\min\{ q,\tfrac 1{|m\alpha -b|}\}$, 
so that $R_q\leq N\leq q$. By Lemma 6.2 of \cite{GSPret},
\begin{align} \label{eq: NewSum}
\sum_{1 \leq |n| \leq q} \frac{\chi(n) e(n\alpha)}n = \sum_{1 \leq |n| \leq  N} \frac{\chi(n) e(n\tfrac bm)}n+O(\log\log q).
\end{align}
Therefore, if $M(\chi)\gg \sqrt{q}(\log q)^\Delta$ then
either $\chi$ is odd and $L(1,\chi)\gg (\log q)^\Delta$, or
$M(\chi) = |S(\chi,\alpha q)|$ where $\alpha$ lies on a major arc.
The following proposition provides more detailed information in these cases.

 \begin{proposition}\label{prop: BoundsOnMchi}
Fix $\tfrac 2\pi < \Delta < 1$ and let $\chi$ be a character mod $q$. We have
\[ M(\chi) \gg \sqrt{q}(\log q)^{\Delta}\]
 if and only if there is a primitive character 
$\xi \pmod \ell$ with $\xi(-1)=-\chi(-1)$ and $\ell\leq  (\log q)^{2-2\Delta}(\log\log q)^4$ such that 
\[
\max_{1 \leq N \leq q} \bigg|\sum_{n \leq N}  \frac { (\chi\bar{\xi})(n) }n \bigg| \gg \frac{ \phi( \ell ) }{\sqrt{\ell } } (\log q)^{\Delta}.
\]

More precisely, in this case we have
\begin{equation}\label{eq: MchiAsymp}
M(\chi)  \sim \tau_{\chi,\xi} \cdot \frac{  \sqrt{q\ell}  }{\pi \phi(\ell)} \cdot  \max_{1\leq N\leq q} \bigg| \sum_{n\leq N} \frac{(\chi\bar{\xi})(n)}n \bigg|
\end{equation}
where $\tau_{\chi,1}\in [\tfrac 12,3]$ and $\tau_{\chi,\xi}=\max\{ 1,   |1-(\chi\bar{\xi})(2)| \}$ if $\xi \ne 1$.
\end{proposition}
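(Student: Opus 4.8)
The plan is to establish the sharper statement \eqref{eq: MchiAsymp} directly; the claimed equivalence is then immediate, since $\tfrac{\sqrt{q\ell}}{\pi\phi(\ell)}\cdot\tfrac{\phi(\ell)}{\sqrt\ell}=\tfrac{\sqrt q}{\pi}$ and $\tau_{\chi,\xi}\asymp 1$. The starting point is the reduction already carried out above: by P\'olya's expansion \eqref{eq_PFE}, the evaluation \eqref{eq: L1,chi}, the minor-arc bound \eqref{eq: minorArc} and the major-arc identity \eqref{eq: NewSum}, if $M(\chi)\gg\sqrt q(\log q)^\Delta$ then, up to an error $o(\sqrt q(\log q)^\Delta)$,
\[
M(\chi)=\frac{\sqrt q}{2\pi}\Bigl|(1-\chi(-1))L(1,\bar\chi)-\sum_{1\le|n|\le N}\frac{\bar\chi(n)e(-nb/m)}{n}\Bigr|
\]
for some $b/m$ with $(b,m)=1$ and $m\le r_q:=(\log q)^{2-2\Delta}(\log\log q)^4$, and some truncation $N=\min\{q,1/|m\alpha-b|\}\in[R_q,q]$ (with the understanding that the degenerate choice $m=1,b=0$, together with the minor arcs, recovers the $L(1,\bar\chi)$-only case, which is the $\ell=1$ case; and that for $\ell\ne 1$ the term $(1-\chi(-1))L(1,\bar\chi)$ is either subsumed in the $\ell=1$ analysis or is $O((\log q)^\Delta)$ and hence part of the error).

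Writing the sum over $1\le|n|\le N$ as $\sum_{n\le N}\bar\chi(n)h(n)/n$, where $h(n):=e(-nb/m)-\chi(-1)e(nb/m)$ is a $1$-bounded function of $n\bmod m$, the next step is to expand $h$ into Dirichlet characters. On the units mod $m$ only those $\psi\bmod m$ with $\psi(-1)=-\chi(-1)$ occur, each with Fourier coefficient $2\psi(-b)g(\bar\psi)/\phi(m)$, where $g(\bar\psi)$ is the Gauss sum; meanwhile the contribution of the $n$ with $(n,m)>1$ unwinds recursively into sums of exactly the same shape to moduli properly dividing $m$. Reorganising by conductor, $\sum_{n\le N}\bar\chi(n)h(n)/n$ becomes a linear combination, with coefficients of modulus $\ll\sqrt{\mathrm{cond}(\psi)}/\phi(m)$, of the sums $\sum_{n\le N}(\bar\chi\psi)(n)/n$ over primitive $\psi$ of conductor dividing $m$ and with $\psi(-1)=-\chi(-1)$.

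The heart of the matter --- and the step I expect to be the main obstacle --- is to show that, whenever this linear combination has size $\gg(\log q)^\Delta$, it is dominated (up to an error that is $o$ of the main term) by the single term attached to one primitive character $\xi$, of conductor $\ell\mid m$. Two ingredients are needed. First, the triangle inequality for the pretentious distance $\mathbb{D}$, together with the lower bound $\mathbb{D}(\psi_1\bar\psi_2,1;x)^2\ge(1-o(1))\log\log x$ valid for every non-principal character of conductor at most $r_q$ (which follows from $|L(1,\cdot)|\ll\log r_q\ll\log\log q$ for such characters), which forces ``$\bar\chi\psi$ pretentious to $1$'' for at most one $\psi$. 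Second, Proposition \ref{prop:lambdaOpt}(a), combined with a large-sieve type mean value bound for $\sum_\psi\bigl|\sum_{n\le N}(\bar\chi\psi)(n)/n\bigr|^2$, to absorb the remaining characters. It is here that the precise ranges are essential: the threshold $\Delta>\tfrac 2\pi$ and the choice $r_q=(\log q)^{2-2\Delta}(\log\log q)^4$ are calibrated so that, given the exponent $\lambda$ appearing in \eqref{eq:GS2b}, the conductors (and the number of relevant characters, weighted by the Gauss-sum factors) are small enough that all non-dominant contributions together are $o(\sqrt q(\log q)^\Delta)$. This produces a primitive $\xi$ with $\xi(-1)=-\chi(-1)$ and $\ell\le r_q$ for which $\tfrac{\sqrt q}{2\pi}\cdot\tfrac{2\sqrt\ell}{\phi(m)}\bigl|\sum_{n\le N}(\chi\bar\xi)(n)/n\bigr|\gg\sqrt q(\log q)^\Delta$; since $\phi(m)\ge\phi(\ell)$ this gives the ``only if'' implication along with $\max_{N\le q}\bigl|\sum_{n\le N}(\chi\bar\xi)(n)/n\bigr|\gg\tfrac{\phi(\ell)}{\sqrt\ell}(\log q)^\Delta$.

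For the converse, and for the sharp constant in \eqref{eq: MchiAsymp}, one runs the construction backwards: given such a $\xi$, let $N_0\le q$ attain $\max_N\bigl|\sum_{n\le N}(\chi\bar\xi)(n)/n\bigr|$, and take $\alpha=b/m+\delta$ with $m$ a small multiple of $\ell$, with $b$ chosen so that the phase $\psi(-b)g(\bar\psi)$ at $\psi=\xi$ aligns with the argument of the cutoff sum, and with $|\delta|\asymp 1/(mN_0)$ so that the effective truncation in \eqref{eq: NewSum} is $\asymp N_0$; the same domination argument then yields $|S(\chi,\alpha q)|\sim\tfrac{\sqrt{q\ell}}{\pi\phi(m)}$ times that sum. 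Optimising over the admissible moduli $m$ (namely $\ell,2\ell,\dots$) and over $b$ produces the constant $\tau_{\chi,\xi}$: passing from $m=\ell$ to $m=2\ell$ replaces the cutoff sum by its odd-$n$ part and allows the recursive $(n,m)>1$ term to reinforce the main term, which is precisely the source of the factor $\max\{1,|1-(\chi\bar\xi)(2)|\}$ when $\xi\neq 1$. Finally, in the case $\xi=1$ (so that $\chi$ is odd) one must additionally carry the term $(1-\chi(-1))L(1,\bar\chi)$ of \eqref{eq: L1,chi} through the analysis; its interference with the $m=1$ major-arc sum, as $N$ ranges over $[1,q]$, is what accounts for the constant $\tau_{\chi,1}\in[\tfrac12,3]$.
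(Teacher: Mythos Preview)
Your overall architecture matches the paper's: P\'olya's expansion plus the major/minor arc dichotomy, then a Dirichlet-character expansion of the additive phase $e(nb/m)$, followed by the isolation of a single dominant primitive character $\xi$ and an optimisation over $m$ to extract $\tau_{\chi,\xi}$. The treatment of the constants in the $\ell>1$ and $\ell=1$ cases is also in the right spirit.

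The gap is in the domination step, and it is exactly where the threshold $\Delta>\tfrac 2\pi$ lives. Your two ingredients do not reach it. From the triangle inequality for $\mathbb D$ together with $\mathbb D(\psi_1\bar\psi_2,1;q)^2=(1-o(1))\log\log q$ you only get $\mathbb D(\chi\bar\psi_2,1;q)^2\ge(\tfrac14-o(1))\log\log q$ for the second-closest $\psi_2$; feeding this into Proposition~\ref{prop:lambdaOpt}(a) (exponent $\lambda$) yields
\[
\Bigl|\sum_{n\le N}\frac{(\chi\bar\psi_2)(n)}{n}\Bigr|\ll(\log q)^{1-\lambda/4+o(1)},
\]
and $1-\lambda/4\approx 0.794$, strictly above $\tfrac 2\pi\approx 0.637$. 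A ``large-sieve type mean value bound'' over all $\psi$ does not help here: the obstruction is a \emph{single} secondary character $\psi_2$ with an $O(1)$ coefficient, not an aggregate over many characters, and orthogonality over characters mod $m\le r_q$ says nothing useful about a single partial sum up to $N\le q$.

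The paper closes this gap with two sharper inputs that you do not invoke. First, in place of Proposition~\ref{prop:lambdaOpt}(a) it uses the $t$-twisted Hal\'asz-type bound \eqref{eq: logChiUppBd} (from \cite{LamMan}), which gives $|\sum_{n\le N}(\chi\bar\psi_k)(n)/n|\ll(\log q)\,e^{-\mathbb D(\chi,\psi_k n^{it_k};q)^2}$ with exponent $1$ rather than $\lambda$. Second, in place of the naive triangle-inequality repulsion it uses Lemma~\ref{lem: 2/Pi}: since the Dirichlet expansion of $e(nb/m)$ only produces characters $\psi$ with $(\chi\psi)(-1)=-1$, the ratio of any two such has a fixed parity, and the Fourier expansion \eqref{eq_fourier} of $|1+e(\alpha)|$ then forces $\mathbb D(\chi,\psi_2 n^{it_2};q)^2\ge(1-\tfrac 2\pi-o(1))\log\log q$. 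These two together give $(\log q)^{2/\pi+o(1)}$ for each of the finitely many nearby $\psi_k$, while the generic characters are handled by $c_K\to 1$ and absorb the factor $\sqrt{r_q}=(\log q)^{1-\Delta+o(1)}$; this is precisely why $r_q$ is chosen as it is and why $\Delta>\tfrac 2\pi$ suffices.
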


Throughout, $N_q$  denotes an integer value of $N$ that maximizes the right-hand side of \eqref{eq: MchiAsymp}.

Using results from the next subsection, we will deduce Corollary \ref{cor: MandL} by showing that 
if $M(\chi) \gg \sqrt{q} \log q$ then for $\psi=\chi\bar{\xi}$ we have
\begin{align}\label{eq:qToNq}
|L(1,\psi)|\gg \log q \text{  if and only if  } \bigg| \sum_{n\leq N_q} \frac{\psi(n)}n \bigg|\gg \log q.
\end{align}
However, there is not necessarily  a correspondence between these two sums when they are slightly smaller.  
For example, if $\psi(p)=1$ for all $p\leq N:=\exp((\log q)^\tau)$ and 
$\psi(p)=-1$ for all $ \exp((\log q)^\tau)<p\leq q^\delta$ 
where   $\tfrac 12 < \tau<1$ and  $\delta>0$ is some small fixed constant, then assuming $\psi$ is non-exceptional (see \eqref{eq:L1Prod}),
\[
\bigg| \sum_{n\leq N} \frac{\psi(n)}n \bigg|\asymp (\log q)^{\tau} \text{ while }  |L(1,\psi)|\asymp \prod_{p \leq q} \bigg|1-\frac{\psi(p)}{p}\bigg|^{-1} \asymp (\log q)^{2\tau-1},
\]
which is much smaller. Moreover this (purported) example shows why \emph{we cannot assume} that $N_q=q$ and that the largest sum is $\sim |L(1,\psi)|$.

Following an idea from \cite{BoGo}, Proposition \ref{prop: BoundsOnMchi} has the following consequence for quadratic non-residues:\footnote{It is worth recalling that the real primitive characters are given by $1(\cdot)$ as well as $(\frac \cdot n)$ if $n>1$, $(\frac {2n} \cdot)$ and $(\frac {4n} \cdot)$ if $n\equiv 3 \pmod 4$, for each odd squarefree integer $n$.}

 \begin{corollary} \label{cor: QRBd} Fix $\Delta \in ( \tfrac 2\pi , 1)$. Let $n_q$ be the least quadratic non-residue modulo a prime  $q \equiv 3 \pmod{4}$  and suppose that $n_q\geq \exp( (\log q)^{\Delta})$.
 For any odd, squarefree integer $\ell\leq \big(\frac{ \log n_q } { (\log q)^{\Delta} }\big)^2$  we have
\[
M((\tfrac \cdot  {\ell q} )) \geq (c\tau-o(1)) \sqrt{q} \log n_q \text{ where } c=\tfrac{2(\sqrt{e}-1)}{ \pi}=0.41298\dots,
\]
$\tau=\frac 12$ if $\ell=1$, otherwise $\tau=1$ or $2$ depending on whether $q\equiv 7$ or $3 \pmod 8$, respectively.
 \end{corollary}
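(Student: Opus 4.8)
The plan is to apply Proposition~\ref{prop: BoundsOnMchi} to $\chi=(\tfrac{\cdot}{\ell q})$, which is primitive modulo $\ell q$: $\ell$ is odd and squarefree, and since $n_q\ll\sqrt{q}\log q$ by P\'olya--Vinogradov we have $\ell\leq(\log n_q)^{2}\ll(\log q)^{2}<q$, so $(\ell,q)=1$. Take the auxiliary character to be $\xi=(\tfrac{\cdot}{\ell})$ when $\ell>1$ and $\xi=1$ when $\ell=1$; in either case $\xi$ is primitive of conductor $\ell$, and since $q\equiv3\pmod4$ forces $(\tfrac{-1}{q})=-1$, one checks at once that $\xi(-1)=-\chi(-1)$. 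The product function is $(\chi\bar\xi)(n)=(\tfrac nq)\,\mathbf{1}_{(n,\ell)=1}$, induced by the odd character $(\tfrac\cdot q)$. Everything then reduces to bounding below
\[
Q_\ell:=\max_{1\leq N\leq \ell q}\ \Bigl|\ \sum_{\substack{n\leq N\\(n,\ell)=1}}\frac{(\tfrac nq)}{n}\ \Bigr|,
\]
since in \eqref{eq: MchiAsymp}, applied with modulus $\ell q$ in place of $q$ and with $\ell$ the conductor of $\xi$, the relevant factor is $\sqrt{(\ell q)\cdot\ell}/\phi(\ell)=\ell\sqrt{q}/\phi(\ell)$, and the $\ell/\phi(\ell)$ will cancel against a $\phi(\ell)/\ell$ that the coprimality restriction produces inside $Q_\ell$ --- which is why no power of $\ell$ survives in the final bound.

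The heart of the argument is an elementary estimate using only that $(\tfrac nq)=1$ for all $n<n_q$ (every prime factor of such an $n$ is $<n_q$, hence a quadratic residue by minimality of $n_q$). Fix $\delta\in[0,1)$, put $N=n_q^{1+\delta}$, and write $S_\ell(z)$ for $\sum 1/m$ over those $m\leq z$ coprime to $\ell$ all of whose prime factors are $<n_q$. Since $N<n_q^{2}$, every $n\leq N$ has at most one prime factor $\geq n_q$, and only to the first power; splitting off this (possible) large prime, using $(\tfrac mq)=1$ on the remaining factor $m$, and bounding the signed contribution of these terms below by minus their absolute value gives
\[
\sum_{\substack{n\leq N\\(n,\ell)=1}}\frac{(\tfrac nq)}{n}\ \geq\ S_\ell(N)\ -\ \sum_{n_q\leq p\leq N}\frac1p\,S_\ell(N/p),
\]
whatever the character $(\tfrac\cdot q)$ does on the primes of $[n_q,N]$. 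All these sums are evaluated to leading order by Mertens' theorems --- for $S_\ell(N/p)$ the smoothness is automatic as $N/p<n_q$, while $S_\ell(N)=\frac{\phi(\ell)}{\ell}\bigl[1+2\delta-(1+\delta)\log(1+\delta)\bigr]\log n_q$ to main order --- and assembling them, absorbing the $O(\log\log q)$ errors into the main term (legitimate since $\log n_q\geq(\log q)^{\Delta}\to\infty$), yields, uniformly in $\ell$,
\[
\sum_{\substack{n\leq N\\(n,\ell)=1}}\frac{(\tfrac nq)}{n}\ \geq\ \frac{\phi(\ell)}{\ell}\,h(\delta)\,\log n_q\,(1+o(1)),\qquad h(\delta):=1+3\delta-2(1+\delta)\log(1+\delta).
\]
Since $h'(\delta)=1-2\log(1+\delta)$, $h$ is maximised at $\delta=\sqrt{e}-1\in(0,1)$, with $h(\sqrt{e}-1)=2(\sqrt{e}-1)$; this value, together with the $\tfrac1\pi$ in \eqref{eq: MchiAsymp}, is the source of $c=\tfrac{2(\sqrt{e}-1)}{\pi}$, and it is precisely why one must push $N$ out to $n_q^{\sqrt{e}}$ rather than stop at $N=n_q$ (where $h(0)=1$ only).

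Choosing $N=\lceil n_q^{\sqrt{e}}\rceil$, which is $\leq q$ for large $q$ since $n_q\ll\sqrt{q}\log q$, the above gives $Q_\ell\geq 2(\sqrt{e}-1)\,\tfrac{\phi(\ell)}{\ell}\log n_q\,(1+o(1))$. The hypothesis $\ell\leq\bigl(\log n_q/(\log q)^{\Delta}\bigr)^{2}$ rearranges to $\log n_q\geq\sqrt{\ell}\,(\log q)^{\Delta}$, so $Q_\ell\geq 2(\sqrt{e}-1)\,\tfrac{\phi(\ell)}{\sqrt{\ell}}(\log q)^{\Delta}(1+o(1))$, which is $\gg\tfrac{\phi(\ell)}{\sqrt{\ell}}(\log(\ell q))^{\Delta}$ with room to spare; likewise $\ell\leq(\log(\ell q))^{2-2\Delta}(\log\log(\ell q))^{4}$ for large $q$, again from $n_q\ll\sqrt{q}\log q$. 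Hence Proposition~\ref{prop: BoundsOnMchi} applies and \eqref{eq: MchiAsymp} yields
\[
M(\chi)\ \sim\ \tau_{\chi,\xi}\cdot\frac{\ell\sqrt{q}}{\pi\phi(\ell)}\cdot Q_\ell\ \geq\ \tau_{\chi,\xi}\cdot\frac{2(\sqrt{e}-1)}{\pi}\,\sqrt{q}\,\log n_q\,(1+o(1)).
\]
For $\ell=1$ we use only $\tau_{\chi,1}\geq\tfrac12$, giving $M(\chi)\geq(c\cdot\tfrac12-o(1))\sqrt{q}\log n_q$; for $\ell>1$, $(\chi\bar\xi)(2)=(\tfrac2q)$ equals $1$ or $-1$ according as $q\equiv7$ or $3\pmod8$, so $\tau_{\chi,\xi}=\max\{1,|1-(\tfrac2q)|\}$ equals $1$ or $2$ respectively --- exactly the values of $\tau$ asserted. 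The main obstacle is the elementary estimate of the middle paragraph: making the displayed inequality a genuine lower bound valid for every behaviour of $(\tfrac\cdot q)$ on the primes of $[n_q,n_q^{\sqrt{e}}]$, keeping all the Mertens estimates uniform in $\ell$, and carrying out the optimisation that extracts $2(\sqrt{e}-1)$.
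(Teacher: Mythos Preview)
Your proof is correct and follows essentially the same approach as the paper: set up $\chi=(\tfrac{\cdot}{\ell q})$ and $\xi=(\tfrac{\cdot}{\ell})$, take $N=n_q^{\sqrt e}$, use that every $n_q$-smooth integer is a quadratic residue to bound the logarithmic sum below by $2(\sqrt e-1)\tfrac{\phi(\ell)}{\ell}\log n_q$, and feed this into Proposition~\ref{prop: BoundsOnMchi}. The differences are purely presentational: the paper writes the lower bound as $2\sum_{P(n)\leq y}1/n-\sum 1/n$ and evaluates via the Dickman $\rho$-function with $w=\sqrt e$ chosen in advance, handling the coprimality to $\ell$ by M\"obius inversion, whereas you make the one-large-prime structure explicit, evaluate the smooth and rough pieces separately via Mertens, and carry out the optimization over $\delta$; the two computations are termwise identical, and your $h(\delta)=1+3\delta-2(1+\delta)\log(1+\delta)$ is exactly the paper's $3w-2-2w\log w$ with $w=1+\delta$.
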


 \begin{proof} Let $\xi=(\tfrac  \cdot \ell)$ and 
 $\chi=(\tfrac \cdot  {\ell q} )$, so that $\chi \overline{\xi} = (\tfrac  \cdot q) 1_{(\cdot,\ell)=1}$ and 
  \[
 \sum_{n \leq x}  \frac { (\chi\bar{\xi})(n) }n = \sum_{\substack{n\leq x\\ (n,\ell)=1}} \frac{(\tfrac n q)}n
 \]
 Let $y=n_q-1$ and $N=x=y^w$ where $w=e^{1/2}$.
 A \emph{$y$-smooth integer} has all of its prime factors $\leq y$, and any $y$-smooth integer here  is a quadratic residue mod $q$. Therefore 
 \begin{align*}  
\sum_{\substack{n\leq x\\ (n,\ell)=1}}  \frac{(\tfrac n q)}n\geq \sum_{\substack{n\leq x\\ P(n)\leq y \\ (n,\ell) = 1}} \frac 1n - 
 \sum_{\substack{n\leq x\\ P(n)>y\\ (n,\ell)=1}} \frac 1n =2 \sum_{\substack{n\leq x\\ P(n)\leq y\\ (n,\ell)=1}} \frac 1n - 
\sum_{\substack{n\leq x\\ (n,\ell)=1}} \frac 1n
 \end{align*}
where $P(n)$ is the largest prime factor of $n$.  Therefore, since $1_{(n,\ell)=1}=\sum_{d|(n,\ell)} \mu(d) $, and as $\ell\leq y$ we deduce that
\begin{align} \label{eq: nqLowBd2}
\sum_{\substack{n\leq x\\ (n,\ell)=1}}  \frac{(\tfrac n q)}n\geq 
\sum_{d | \ell} \mu(d)  \bigg( 2 \sum_{\substack{n\leq x\\ d|n\\ P(n)\leq y}} \frac 1n  - \sum_{\substack{n\leq x\\ d|n}} \frac 1n \bigg) =
\sum_{d | \ell} \frac{\mu(d)}d  \bigg( 2 \sum_{\substack{m\leq x/d \\ P(n)\leq y}} \frac 1m  - \sum_{\substack{m\leq x/d}} \frac 1m \bigg) .
 \end{align}
Let  $\psi(x,y)$ be the number of $y$-smooth integers $\leq x$. It is well-known that $\psi(y^u,y)= y^u\rho(u)(1+O(\frac 1{\log y}))$ as $y\to \infty$ for bounded $u$, with $\rho(u)=1$ for $0\leq u\leq 1$ and $\rho(u)=1-\log u$ for $1\leq u\leq 2$.
Therefore by partial summation we have
\[
\sum_{\substack{n\leq x\\ P(n)\leq y}} \frac 1n = \int_{t=1}^x \frac{\psi(t,y)}{t^2}dt +O(1)
= \int_{u=0}^w \rho(u) du \cdot \log y +O(1),
\]
and
\[
\int_{u=0}^w \rho(u) du=\int_{u=0}^1 du+\int_{u=1}^w (1-\log u) du = \frac 32 w-1,
\]
so that 
\[
2 \sum_{\substack{n\leq x\\ P(n)\leq y}} \frac 1n - 
\sum_{\substack{n\leq x}} \frac 1n = (2w-2)\log y +O(1).
\]  
Since $d\leq \ell=y^{o(1)}$ we can use this in \eqref{eq: nqLowBd2} with $x$ replaced by $x/d$ to obtain
 \begin{align*}
\sum_{\substack{n\leq x\\ (n,\ell)=1}}  \frac{(\tfrac n q)}n &\geq 
\sum_{d | \ell} \frac{\mu(d)}d  ( (2w-2)\log y+O(1))\\
& = \frac{\phi(\ell)}{\ell}   (2w-2)\log y+O\bigg(\frac{\ell}{\phi(\ell)}\bigg) \sim \frac{\phi(\ell)}{\ell}   (2\sqrt{e}-2)\log n_q \\
& \gg \frac{\phi(\ell)}{\sqrt{\ell}}   (\log q)^\Delta
\end{align*}
using the hypothesis on the size of $\ell$. The hypothesis of the second part of Proposition \ref{prop: BoundsOnMchi} 
is therefore satisfied (with $q$ replaced by $\ell q$), and so by \eqref{eq: MchiAsymp} and the last displayed equation we have
\[
M((\tfrac \cdot  {\ell q} ))  \sim \tau_{\chi,\xi} \cdot \frac{  \sqrt{q} \ell }{\pi \phi(\ell)} \cdot  \max_{1\leq N\leq q} \bigg| \sum_{\substack{n\leq N\\ (n,\ell)=1}} \frac{(\tfrac n q)}n \bigg| \gtrsim
\frac{ 2(\sqrt{e}-1)   \tau_{\chi,\xi}  }{\pi  } \cdot    \sqrt{q} \log n_q
\]
where $\tau_{\chi,1}\in [\tfrac 12,3]$ and $\tau_{\chi,\xi}=\max\{ 1,   |1-(\frac 2q)| \}$ if $\xi \ne 1$.
     \end{proof}

 \section{Hal\'asz's Theorem and beyond}\label{sec: HalaszBeyond}
 
  For multiplicative functions $f,g : \mathbb{N} \rightarrow \mathbb{U}$ and $x \geq 2$, we define the \emph{pretentious distance}
$$
\mathbb{D}(f,g;x) := \bigg(\sum_{p \leq x } \frac{1-\text{Re}(f(p)\bar{g}(p))}{p}\bigg)^{1/2}.
$$
It is well-known that $\mathbb{D}$ satisfies the triangle inequality:
\begin{align}\label{eq: triIneq}
\mb{D}(f,h;x) \leq \mb{D}(f,g;x) + \mb{D}(g,h;x) \text{ for all } f,g,h : \mb{N} \ra \mb{U}.
\end{align}
With $2 \leq y \leq x$ we also write $\mathbb{D}(f,g;y,x)$ to work only with the primes in $(y,x]$.
We say that $f$ is \emph{$g$-pretentious} (for the primes up to $x$) if 
$\mb{D}(f,g;x)  \ll 1$; so if $f$ is $g$-pretentious then $f(p) \approx g(p)$ frequently for $p \leq x$.

 \subsection{Hal\'{a}sz's theorem}
For $T>0$, $x \geq 2$ and a multiplicative function $f: \mb{N} \ra \mb{U}$, we also define
$$
M(f;x,T) := \min_{|t| \leq T} \mathbb{D}(f,n^{it};x)^2.
$$
We let $t=t(f;x,T)$ be a real number in this range where we get equality.
Hal\'asz's Theorem (see e.g., \cite[Thm. 1]{GSDec}) states that if $1\leq T\leq \log x$ then, for $M=M(f;x,T)$,
\begin{align} \label{eq: HalThm}
\sum_{n \leq x} f(n)  \ll  (1+M)e^{-M}x + \frac{x}{T}.
\end{align}
If $f(n)=n^{it}$ with $|t| \leq T$ then $M=0$, which reflects the fact that 
$|\sum_{n \leq x}n^{it}|\sim x/|1+it|$.

Hal\'{a}sz's theorem shows that $\bigg|\sum_{n \leq x} f(n)\bigg|$ is $o(x)$ if $f$ is not $n^{it}$-pretentious for any $t\in \mb{R}$.
Elementary estimates for $\zeta(s)$ to the right of the $1$-line imply that 
 \begin{equation} \label{eq: Dfornit}
 \mathbb{D}(1,n^{it};x)^2=\begin{cases}
 \log(1+|t|\log x) + O(1) & \text{ if } |t|\leq 100;\\
 \log\log x +O(\log\log |t|)& \text{ if } |t|\geq 100.
 \end{cases}
 \end{equation}
This shows that $1$ is \emph{not} $n^{it}$-pretentious unless $|t| \ll \tfrac 1{\log x}$.
Therefore if  $|t_1|,|t_2| \leq  T := (\log x)^{O(1)}$ then   $n^{it_1}$ cannot be $n^{it_2}$-pretentious unless $|t_1-t_2|\log x  \ll 1$. 

If $t(f;x,T)$ is not unique, say $t_1$ and $t_2$ both yield equality above, then \eqref{eq: triIneq} implies that 
$\mb{D}(n^{it_1},n^{it_2};x) \leq \mb{D}(f,n^{it_1};x) + \mb{D}(f,n^{it_2};x)=2\mb{D}(f,n^{it_1};x)$. In particular if 
$f$ is  $n^{it_1}$-pretentious  then $f$ is not $n^{it_2}$-pretentious unless $t_2=t_1+O( \tfrac 1{\log x} )$.

\subsection{Hal\'asz-type bounds for  logarithmically weighted sums}

If $f$ is real-valued then we might expect that $t(f;x,T)=0$ but there are examples where this is not so
(which lead to the ``best possible examples'' in Hall and Tenenbaum's estimate \eqref{eq:HTbd}).
The examples  $f(n)=n^{it}$ show that there cannot be an upper bound in terms of $\mathbb{D}(f,1;x)$ alone for arbitrary $f : \mb{N} \ra \mb{U}$, though such bounds are given in \cite{HT} for $f$ belonging to certain restricted families of multiplicative functions (most importantly those of bounded order).

In this article we will need bounds for the logarithmically weighted sums $\sum_{n \leq x} f(n)/n$. 

\begin{proposition} \label{prop: logsum} Let $x \geq 3$. 
Let $f:\mb{N} \ra \mb{U}$ be a multiplicative function with 
 $M=M(f;x,1)$, and let $t \in [-1,1]$ minimize the expression
\[
\tau \mapsto \sum_{p \leq x} \frac{2-\text{\rm Re}( (1+f(p)) p^{-i\tau} ) }{p}, \quad \tau \in [-1,1].
\]
If  $|t| \leq \tfrac 1{\log x}$ then
 \begin{equation} \label{eq:GS.2}
\sum_{n \leq x} \frac{f(n)}{n} \ll (1+M)e^{-M}\log x + \log\log x. 
\end{equation} 
 If  $|t| \geq \tfrac 1{\log x}$ then
\begin{equation} \label{eq:GS2}
\sum_{n \leq x} \frac{f(n)}{n} \ll \frac 1{|t|}(1+M + \log(|t|\log x))e^{-M} + \log\log x.  
\end{equation} 
\end{proposition}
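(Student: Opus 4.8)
The plan is to derive Proposition \ref{prop: logsum} from Hal\'asz's theorem \eqref{eq: HalThm} by partial summation, being careful to track the location of the minimizing frequency $t$. First I would recall that for $1$-bounded multiplicative $f$, the partial sum $A(y) := \sum_{n \leq y} f(n)$ satisfies $A(y) \ll (1+M(f;y,1)) e^{-M(f;y,1)} y + y$, but more usefully, applying Hal\'asz with the truncated range and with the ``twisted'' function $n \mapsto f(n) n^{-it}$: since $t$ minimizes $\tau \mapsto \mathbb{D}(f n^{-i\tau}, 1; x)^2 + \mathbb{D}(n^{-i\tau},1;x)^2$ (rewriting the given expression using $2 - \mathrm{Re}((1+f(p))p^{-i\tau}) = (1 - \mathrm{Re}(p^{-i\tau})) + (1 - \mathrm{Re}(f(p)p^{-i\tau}))$), one sees that $M(f;x,1) = \mathbb{D}(f,n^{it};x)^2$ up to $O(1)$ when $|t|\le 1/\log x$, and in general $t$ is comparable to $t(f;x,1)$ by the uniqueness discussion after \eqref{eq: Dfornit}. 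The key device is $\sum_{n \leq x} f(n)/n = \int_{1^-}^{x} A(u)/u^2 \, du + O(1)$ — actually $= \int_1^x u^{-1} \, dA(u)$, integrated by parts to $A(x)/x + \int_1^x A(u) u^{-2}\, du + O(1)$.

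The main computation is then to estimate $\int_1^x |A(u)| u^{-2}\, du$. Splitting the integral dyadically, on the range $u \in (x^{1/2}, x]$ we have $M(f;u,1) = M + O(1)$ (the sum $\sum_{p\le u}(1-\mathrm{Re}(f(p)n^{it}))/p$ changes by $O(1)$ over such a range), so Hal\'asz gives $|A(u)| \ll (1+M)e^{-M} u + u/1$; but the naive $u/T = u$ term is too lossy, so instead I would use the twisted version: $|A(u)| \le |\sum_{n\le u} f(n) n^{-it}| \cdot$ (correction from Abel summation on $n^{it}$), or more directly bound $\sum_{n\le u} f(n)/n$ by first summing $f(n)n^{-it}/n^{1-it}$. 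The cleanest route is the one used in \cite{GSPret}, Lemma 4.3: write $\sum_{n\le x} f(n)/n$ and apply Hal\'asz to $g(n) = f(n) n^{-it}$, whose mean value is $o(u)$-controlled with $M(g;u,1) = M + O(1)$, then $\sum_{n \le x} f(n)/n = \sum_{n\le x} g(n) n^{it}/n = \int_1^x t^{it-1} \, d\big(\sum_{n \le t} g(n)\big)$; integrating by parts and inserting the Hal\'asz bound for $\sum_{n\le t} g(n)$ yields, after the substitution $v = \log t$, an integral of the shape $\int_0^{\log x} (1+M)e^{-M} e^{iv t}\, dv$ whose modulus is $\ll \min(\log x, 1/|t|)(1+M)e^{-M}$, plus the error contributions from the small-$u$ range (where $u \le x^{1/2}$ contributes $\ll \int_1^{x^{1/2}} du/u \ll \log x$, absorbed, and the very small primes force the $\log\log x$ term via the trivial bound $|A(u)| \ll u$ on $u \le $ some fixed power, but more precisely the $\log\log x$ is the genuine obstruction coming from $n$ composed only of tiny primes). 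When $|t| \geq 1/\log x$ the oscillation $e^{ivt}$ saves a factor $1/(|t|\log x)$ over the full range, and one must also account for the variation of $M(g;u,1)$ which contributes the extra $\log(|t|\log x)$ factor in \eqref{eq:GS2}; when $|t|\le 1/\log x$ there is no oscillatory saving and one gets \eqref{eq:GS.2}.

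I expect the main obstacle to be handling the small-$u$ part of the integral and producing exactly the $\log\log x$ error term rather than something larger: for $u$ bounded (or up to $\exp((\log\log x)^{O(1)})$ or so), Hal\'asz is vacuous and one must use the trivial bound $|A(u)| \le u$, giving $\int u^{-1}\,du = \log$ of the cutoff; choosing the cutoff at the point where Hal\'asz becomes effective — essentially where $M(f;u,1)$ stabilizes — and checking that the contribution is $O(\log\log x)$ requires the standard fact that $\sum_{p \le u} 1/p = \log\log u + O(1)$, so that below $u_0 := \exp(\exp(M))$ (roughly) the trivial bound already matches $(1+M)e^{-M}\log x$. The secondary subtlety is the passage between the minimizer of the stated quadratic form and the Hal\'asz minimizer $t(g;u,1)$ for the twisted function as $u$ varies: one needs that a single $t$ (the one from the hypothesis, chosen at scale $x$) is within $O(1/\log x)$ of the optimal frequency at every scale $u \in (x^{1/2},x]$, which follows from \eqref{eq: Dfornit} and the triangle-inequality argument after \eqref{eq: Dfornit}, together with the observation that including the $1 - \mathrm{Re}(p^{-i\tau})$ term in the form being minimized is exactly what pins $t$ down to be $O(1/\log x)$-close to a ``non-oscillating'' frequency, which is what makes the two cases of the Proposition clean. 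Everything else — the dyadic decomposition, Abel summation, the $v = \log t$ substitution, bounding $|\int_0^V (1+M)e^{-M}e^{ivt}\,dv|$ — is routine.
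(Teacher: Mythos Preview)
Your partial-summation approach has a genuine gap at the step where you claim the oscillatory saving $1/|t|$ in the case $|t|\ge 1/\log x$. After integrating by parts and substituting $v=\log u$, you arrive at an integral of the form $\int_0^{\log x} \big(B(e^v)/e^v\big)\, e^{ivt}\, dv$, where $B(u)=\sum_{n\le u} f(n)n^{-it}$. Hal\'asz's theorem \eqref{eq: HalThm} gives only an \emph{upper bound} $|B(e^v)/e^v| \ll (1+M)e^{-M}+\text{(error)}$; it does not furnish a smooth main term. You then replace the integrand by $(1+M)e^{-M}e^{ivt}$ and bound the modulus of the resulting integral by $(1+M)e^{-M}\min(\log x,1/|t|)$, but this is illegitimate: one cannot substitute an upper bound for $|h(v)|$ into $\int h(v)e^{ivt}\,dv$ and still claim cancellation from the phase. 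With only the Hal\'asz upper bound available, partial summation yields at best $\int_0^{\log x}|h(v)|\,dv \ll (1+M)e^{-M}\log x$, which recovers \eqref{eq:GS.2} but not \eqref{eq:GS2}. Your remark that the ``variation of $M(g;u,1)$'' accounts for the extra $\log(|t|\log x)$ does not address this: the missing factor $1/(|t|\log x)$ is multiplicative, not additive inside the exponent.

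The paper avoids this obstruction by a different route: it applies the generalized Hal\'asz theorem \eqref{eq: genHal} with $\kappa=2$ directly to the convolution $1\ast f$, using the identity $\sum_{m\le x}(1\ast f)(m)=x\sum_{n\le x}f(n)/n+O(x)$. The relevant Dirichlet series is then $\zeta(s)F(s)$, and the quantity $e^{-M}(\log x)^2=\max_{|\tau|}|\zeta(s)F(s)/s|$ on the line $\text{Re}(s)=1+\tfrac{1}{\log x}$ is attained at precisely the $t$ of the Proposition, since maximizing $|\zeta(s)F(s)|$ amounts to minimizing $\mathbb{D}(1,n^{i\tau};x)^2+\mathbb{D}(f,n^{i\tau};x)^2$. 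The factor $1/|t|$ in \eqref{eq:GS2} then arises not from any oscillatory integral but from the size $|\zeta(1+\tfrac{1}{\log x}+it)/(\tfrac{1}{\log x}+it)|\asymp \min(\log x,\tfrac{1}{|t|})$, which is built into the $\kappa=2$ Hal\'asz bound for $1\ast f$. This also explains structurally why the Proposition's minimizer involves the combined distance $\mathbb{D}(1,n^{i\tau};x)^2+\mathbb{D}(f,n^{i\tau};x)^2$ rather than $\mathbb{D}(f,n^{i\tau};x)^2$ alone --- a feature your sketch treats only heuristically.
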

 
The bound \eqref{eq:GS2} improves upon Theorem 2.4 in \cite{Gold} and Theorem 1.4 in \cite{LamMan} whenever $|t|\gg \tfrac {\log\log x}{\log x}$ (though the latter can be used to replace $(1+M)e^{-M} $ by just $e^{-M}$). \\

\subsection{Deductions} Using Proposition \ref{prop: logsum} we now  prove Corollary \ref{cor: MandL}.

\begin{proof} [Proof of more than \eqref{eq:qToNq}]
Let $\psi = \chi\bar{\xi}$.
By the definition of $N_q$,
$$
\mathcal L:=\bigg|\sum_{n \leq N_q} \frac{\chi\bar{\xi}(n)}{n}\bigg| \geq \bigg|\sum_{n \leq q} \frac{\chi\bar{\xi}(n)}{n}\bigg| = |L(1,\psi)| + O(1)
$$
(see \eqref{eq:L1Trunc} below), and so  \eqref{eq:qToNq} follows if $|L(1,\psi)| \gg \log q$.

Conversely, by \eqref{eq:GS2b} of Proposition \ref{prop:lambdaOpt} ((which is a consequence of Proposition \ref{prop: logsum}), we have
\[
\mathcal L:= \bigg|\sum_{n \leq N_q} \frac{\chi\bar{\xi}(n)}{n}\bigg| \ll (\log N_q) \exp( -\{ \lambda+o(1)\} \mathbb D(\psi,1;N_q)^2 )
\]
so that $\exp(-\mb{D}(\psi,1;N_q)^2)\geq (\frac{ \log N_q }{\mathcal L})^{-1/\lambda+o(1)}\gg 
(\frac{\mathcal L}{ \log N_q })^2$.
Now as $\psi$ is non-exceptional we have (see section \ref{sec: shortSums}) that
\begin{align*}
|L(1,\psi)|& \asymp \log q \, e^{ -\mb{D}( \psi ,1 ;q )^2 } =\log q \, e^{ -\mb{D}( \psi ,1 ;N_q )^2 -\mb{D}( \psi ,1 ;N_q,q )^2}\\ 
&\gg \log q \,  \bigg(\frac {\mathcal L}{ \log N_q }\bigg)^{2}   \bigg(\frac{\log N_q}{\log q}\bigg)^2=\frac{\mathcal L^2}{\log q}
\end{align*}
since $\mb{D}( \psi ,1 ;N_q,q )^2\leq 2\sum_{N_q<p\leq q} \frac 1p\leq 2\log (\frac{\log q}{\log N_q})+O(1)$.
If $\mathcal L\gg \log q$ then this establishes \eqref{eq:qToNq}; if $\mathcal L\gg (\log q)^\tau$ then this gives
$|L(1,\psi)|\gg (\log q)^{2\tau-1}$ showing that the example given after \eqref{eq:qToNq} is, in some sense, ``best possible''.
\end{proof}

\begin{proof} [Proof of Corollary \ref{cor: MandL}]
Suppose that $M(\chi) \gg \sqrt{q}\log q$. 
Proposition \ref{prop: BoundsOnMchi} shows that there is $\xi$ primitive of conductor $\ell \leq  \log q$ such that $\xi(-1) = -\chi(-1)$ and
$$
\log q \ll \frac{ M(\chi )}{ \sqrt{q} } \ll \frac{ \sqrt{\ell} }{ \phi(\ell) } \bigg| \sum_{n \leq N_q} \frac{ (\chi \bar{\xi})(n) }n \bigg|.
$$
The right-hand sum is $\ll \log q$, so $\ell \ll 1$. 
If $\chi\bar{\xi}$ is induced by a primitive character $\psi \pmod{ \ell^{\ast} }$ with $\ell^\ast | \ell$ then by Lemma 4.4 of \cite{GSPret}, 
\begin{equation} \label{eq: passtoPrim}
\sum_{n \leq N_q} \frac{ \psi(n) }n 
= \prod_{ p | \tfrac \ell{\ell^\ast} } \bigg( 1 - \frac{ \psi(p) }p \bigg)^{-1} \sum_{ n \leq N_q } \frac{ (\chi\bar{\xi})(n) }n + O(1),
\end{equation}
so $| \sum_{n \leq N_q} \tfrac { \psi(n) }n | \gg \log q$. By \eqref{eq:qToNq}, we obtain $|L(1, \psi )| \gg \log q$. 

Conversely, suppose there is a primitive character $\xi$ of conductor $\ell \ll 1$ with $\xi(-1) = -\chi(-1)$ such that 
$|L(1, \psi )| \gg \log q$, where $\psi$ is the primitive character that induces $\chi\bar{\xi}$.
By \eqref{eq:qToNq} and \eqref{eq: passtoPrim} we have $|\sum_{n \leq N_q} \tfrac{ (\chi\bar{\xi})(n) }n| \gg \frac{\phi(\ell)}{\sqrt{\ell}} \log q$,
and so Proposition \ref{prop: BoundsOnMchi} implies that
\[
M(\chi) \gg \frac{ \sqrt{ q \ell } }{ \phi(\ell) } \bigg|\sum_{n \leq N_q} \frac{(\chi\bar{\xi})(n)}{n}\bigg| \gg \sqrt{q}\log q,
\]
as claimed.
\end{proof}

\subsection{A generalization of Hal\'{a}sz's theorem}
Given a multiplicative function $f: \mb{N} \ra \mb{C}$, let $F(s) := \sum_{n \geq 1} f(n)/n^s$ denote its Dirichlet series, assumed to be analytic and non-zero for $\text{Re}(s) > 1$.
For such $s$ we write $-\tfrac{F'}{F}(s) = \sum_{n \geq 1} \Lambda_f(n)/n^s$. 

For fixed $\kappa\geq 1$ we restrict attention to those multiplicative functions $f$ for which $|\Lambda_f(n)|\leq \kappa \Lambda(n)$, where $\Lambda$ is the von Mangoldt function.
A generalization of Hal\'asz's Theorem to such $f$ (Theorem 1.1 of \cite{GHS2}) states that 
\begin{equation} \label{eq: genHal}
\sum_{n \leq x} f(n)  \ll  (1+M)e^{-M} x (\log x)^{\kappa-1} + \frac x{\log x} (\log\log x)^{\kappa},
\end{equation}
where $M$ is defined by
\[
e^{-M}(\log x)^\kappa =  \max\big\{ | s^{-1} F(s)| :\  s=1+\tfrac 1{\log x}+it \text{ with } |t|\leq (\log x)^\kappa\big\}.
 \]
In section \ref{sec: uppBdLog} we will apply this result (with $\kappa = 2$) to the convolution $1 \ast f$, where $f: \mb{N} \ra \mb{U}$ is multiplicative, in order to prove Proposition \ref{prop: logsum}.


\section{Large short character sums and large $L(1+it,\chi)$ values} \label{sec: shortSums}

In this section we will prove Proposition \ref{prop: LandB}.

\subsection{Truncations of  $L(1,\chi)$}
Let $t \in \mb{R}$. By partial summation we have 
\[
\bigg|\sum_{n>N} \frac{\chi(n)}{n^{1+it}}\bigg| \leq \frac{|S(\chi,N)|}N + |1+it| \int_N^\infty \frac{|S(\chi,y)|}{y^2} dy \leq (2+|t|) \frac{M(\chi)}N,
\]
so by the P\'{o}lya-Vinogradov theorem,
\begin{equation}\label{eq:L1Trunc}
L(1+it,\chi) = \sum_{n\leq N} \frac{\chi(n)}{n^{1+it}}+O\bigg( (2+|t|) \frac{\sqrt{q}\log q} N \bigg).
\end{equation}
We wish to also truncate the Euler product for $L(1+it,\chi)$ at $q$ when $|t|\ll 1$, losing at most a constant multiple. The prime number theorem in arithmetic progressions tells us that there exist constants $A,c>0$ such that if $L(s,\chi)$ has no exceptional zero
then
\begin{equation} \label{eq:PNTAP}
\psi(x,\chi) = \sum_{n \leq x} \chi(n)\Lambda(n) \ll x\exp\bigg( -c \frac{\log x}{\log q} \bigg) +\frac x{\log x}
\end{equation}
for all $x\geq q^A$. By partial summation we deduce that if $B>A$ then
\[
\sum_{p>q^B} \frac{\chi(p)}{p^{1+it}} =  - (1+it)\int_{q^B}^\infty \frac{\psi(u,\chi)}{u^{2+it}\log u}  du+O(1) \ll (1+|t|) \frac{e^{-cB}}B + 1.
\]
Now let $B=\max\{ 2A, (1/c) \log (1+|t|)\}$ so that since $\sum_{q<p\leq q^B}  \chi(p)/p^{1+it} \leq \sum_{q<p\leq q^B}1/p\ll \log B$, we have
\[
\sum_{p>q} \frac{\chi(p)}{p^{1+it}}  \ll \log B+ (1+|t|) \frac{e^{-cB}}B + 1 \ll 1+\log\log (1+|t|).
\]

Hence, if
$\chi$ is non-exceptional then for all $t \in \mb{R}$, $|t| \ll 1$,
\begin{align}\label{eq:L1Prod}
|L(1+it,\chi)| \asymp \bigg| \prod_{p\leq q} \bigg( 1 -\frac{\chi(p)}{p^{1+it}}\bigg)^{-1}\bigg| \asymp (\log q) e^{-\mathbb{D}(\chi,n^{it} ; q)^2}.
\end{align}

Taking $N=q$ in \eqref{eq:L1Trunc} and assuming $|t|\ll 1$, we have
\begin{align*}
L(1+it,\chi) = \sum_{n\leq q} \frac{\chi(n)}{n^{1+it}}+o(1)= ({1+it}) \int_1^q \frac{S(\chi,u)}{u^{2+it}} du +o(1)
\ll \int_1^q \frac{|S(\chi,u)|}{u^{2}} du +o(1)  ,
\end{align*}
and so, for any $c>0$, we have
\begin{equation} \label{eq: Lbd}
|L(1+it,\chi)|  \ll \bigg( c + \max_{q^c\leq x\leq q}\frac 1x |S(\chi,x)|\bigg) \log q  
\end{equation}
using the bound $|S(\chi,u)|\leq u$ for $u\leq q^c$. 

\subsection{Exceptional characters}
Landau proved that there exists an absolute constant $c > 0$ such that
for any $Q$ sufficiently large there is \emph{at most} one $q \leq Q$, one primitive real character $\chi \pmod{q}$ and one real number $\beta \in (0,1)$
such that 
\[
L(\beta,\chi)=0 \text{ and } \beta \geq 1-\frac{c}{\log Q}.
\]
If such a triple $(q,\chi,\beta)$ exists then we call
$q$ an \emph{exceptional modulus}, $\beta$ an \emph{exceptional zero} and $\chi$ an \emph{exceptional character}. 

If exceptional zeros exist there must be infinitely many of them (otherwise we can decrease $c$ as needed).
If $\{\beta_j\}_j$ is a sequence of exceptional zeros and $\{q_j\}_j$ is the corresponding set of exceptional moduli then
\[
(1-\beta_j)\log q_j \ra 0 \text{ as $j \ra \infty$.}
\]
It is an important open problem to obtain effective lower bounds for $1-\beta$.
Siegel's theorem (see e.g., \cite[Thm. 5.28]{IK}) states that if $\beta$ is the largest real zero of $L(s,\chi)$ then 
$1-\beta \gg_{\eps} q^{-\eps}$ for any $\eps > 0$, but the implicit constant is ineffective unless $\eps \geq 1/2$.

If $L(s,\chi)$ has an exceptional zero then  $\chi(p) = -1$ for many ``small'' primes. This suggests (but does not directly imply) the following result:

\begin{lemma}\label{lem: excepChi}
Suppose that $\chi$ is an exceptional character modulo $q$. Then: \\
a) $|L(1+it,\chi)| = o(\log q)$ when $|t|\ll 1$,  and  \\
b) for fixed $c > 0$ we have $|S(\chi,x)| = o_{q\to\infty}(x)$ for all $x \geq q^c$.
\end{lemma}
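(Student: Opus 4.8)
\textbf{Proof proposal for Lemma \ref{lem: excepChi}.}

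The plan is to exploit the defining feature of an exceptional character, namely that $L(\beta,\chi)=0$ with $\beta$ extremely close to $1$, to show that $\chi$ must pretend to be the M\"obius-like function $\mu$ (i.e.\ $\chi$ is $(-1)$-pretentious, or more precisely $\chi(p)\approx -1$) on a long initial range of primes, and then feed this into the machinery already developed in the excerpt. For part (a), I would start from the product formula \eqref{eq:L1Prod}, which (since $\chi$ is real and hence non-exceptional in the sense of having no \emph{further} exceptional zero beyond $\beta$, or after discarding the single exceptional modulus) gives $|L(1+it,\chi)| \asymp (\log q)\, e^{-\mathbb D(\chi,n^{it};q)^2}$ for $|t|\ll 1$. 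It therefore suffices to show $\mathbb D(\chi,n^{it};q)^2 \to \infty$ as $q\to\infty$ along any sequence of exceptional characters, uniformly for $|t|\ll 1$. The key input is the standard fact that an exceptional zero forces $\sum_{p\le x}(1+\chi(p))/p = o(\log\log x)$ for $x$ up to a power of $q$ (this is how one proves $\psi(x,\chi)$ is small; one can extract it from the hypothesis $1-\beta = o(1/\log q)$ via the explicit formula / zero-free region, or cite it). Combined with \eqref{eq: Dfornit}, which controls $\mathbb D(1,n^{it};q)^2$ and shows $n^{it}$ is not $1$-pretentious unless $|t|\ll 1/\log q$, the triangle inequality \eqref{eq: triIneq} then gives $\mathbb D(\chi,n^{it};q)^2 \ge \tfrac12\mathbb D(\chi,1;q)^2 - \mathbb D(1,n^{it};q)^2$, and $\mathbb D(\chi,1;q)^2 = \sum_{p\le q}(1-\chi(p))/p = 2\log\log q - \sum_{p\le q}(1+\chi(p))/p + O(1) \to \infty$ faster than the $O(1)$ or $\log(1+|t|\log q)=O(\log\log q)$ error can absorb when $|t|\ll 1$; one has to be a little careful in the regime $1/\log q \ll |t|\ll 1$, but there $\mathbb D(1,n^{it};q)^2 = \log(|t|\log q)+O(1)$ which is still $O(\log\log q)$, whereas $\sum_{p\le x_0}(1+\chi(p))/p = o(\log\log x_0)$ for a suitable power $x_0=q^{\delta}$ forces $\mathbb D(\chi,1;q)^2 \ge (2-o(1))\log\log q$, and the difference still tends to infinity. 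This yields $|L(1+it,\chi)| = (\log q)\cdot e^{-\omega(1)} = o(\log q)$.

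For part (b), I would deduce it from part (a) together with Proposition \ref{prop: LandB} — except that Proposition \ref{prop: LandB} is precisely what this section is building towards, so to avoid circularity I would instead argue directly via Hal\'asz's theorem. Since $\chi$ is real, the relevant minimizing $t$ in $M(\chi;x,T)$ is either $0$ or, if not, is $O(1/\log x)$ off from a point where $n^{it}$ pretends to be $\chi$; in any case the argument of part (a) shows $M(\chi;x,T) = \mathbb D(\chi,n^{it};x)^2 \to \infty$ for any fixed $T$ and any $x\ge q^c$ (the same smooth-prime estimate $\sum_{p\le x}(1+\chi(p))/p = o(\log\log x)$ holds for all $x$ in the range $q^c \le x \le q^{O(1)}$ when $q$ is exceptional). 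Then Hal\'asz's theorem \eqref{eq: HalThm}, applied with $T$ fixed, gives $\sum_{n\le x}\chi(n) \ll (1+M)e^{-M}x + x/T$; choosing $T=T(\eps)$ large and then letting $q\to\infty$ (so $M\to\infty$) shows $|S(\chi,x)| \le \eps x$ eventually, i.e.\ $S(\chi,x) = o(x)$. Alternatively, and perhaps cleanest, one can bound $|S(\chi,x)|$ directly by a smooth-numbers argument as in the proof of Corollary \ref{cor: QRBd}: an exceptional zero makes $\chi$ behave like $\mu$, so $\chi\ast 1$ is a nonnegative-ish function supported mostly where it's small, giving cancellation — but the Hal\'asz route is more robust and I would present that.

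The main obstacle, I expect, is making the passage ``exceptional zero $\Rightarrow$ $\sum_{p\le x}(1+\chi(p))/p$ is small'' precise and uniform in the range of $x$ we need (roughly $q^c \le x \le q^{O(1)}$, for part (b)), with the correct dependence on $1-\beta$. This is classical (it underlies \eqref{eq:PNTAP} and the Deuring--Heilbronn phenomenon), but the clean quantitative statement — something like $\mathbb D(\chi,1;x)^2 \ge 2\log\log x - \log\bigl(\tfrac{1}{1-\beta}\bigr) + O(1)$ for $x \le \exp(1/(1-\beta))$, hence $\ge (2-o(1))\log\log q$ throughout our range since $1-\beta = o(1/\log q)$ — needs to be cited or derived carefully from the explicit formula. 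Everything else is bookkeeping with the triangle inequality and the two displayed estimates \eqref{eq: Dfornit} and \eqref{eq:L1Prod} already in hand. A secondary subtlety is the case $|t|$ not negligible but still $\ll 1$ in part (a): one must confirm that $\log(|t|\log q)$ cannot catch up to $\mathbb D(\chi,1;q)^2$, which it cannot since the former is $\le \log\log q + O(1)$ while the latter is $(2-o(1))\log\log q$.
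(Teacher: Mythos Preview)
Your approach rests on an input that is not available in the strength you need. You assert that an exceptional zero forces $\sum_{p\le q}(1+\chi(p))/p = o(\log\log q)$, equivalently $\mb{D}(\chi,1;q)^2 \ge (2-o(1))\log\log q$, and you try to justify this via the formula ``$\mb{D}(\chi,1;x)^2 \ge 2\log\log x - \log\bigl(\tfrac{1}{1-\beta}\bigr) + O(1)$, hence $\ge (2-o(1))\log\log q$ since $1-\beta = o(1/\log q)$''. But the arithmetic here is backwards: $1-\beta = o(1/\log q)$ means $\log(1/(1-\beta)) > \log\log q$, so even granting your displayed inequality it yields at best $\mb{D}(\chi,1;q)^2 \ge \log\log q - \omega(1)$, not $(2-o(1))\log\log q$. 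In fact the best one extracts from the exceptional-zero hypothesis by the route the paper and the literature take is only $\mb{D}(\chi,1;q)^2 \ge \log\log(1/\theta) + O(1)$ with $\theta := \max\{(1-\beta)\log q,\, 1/\log q\}$; this tends to infinity, but possibly as slowly as $\log\log\log q$. That weak bound, fed into Hall--Tenenbaum \eqref{eq:HTbd} (or your Hal\'asz variant, after noting that $\chi$ real forces the minimizing $t$ to satisfy $|t|\ll 1/\log x$), is still enough for part (b). But it is \emph{not} enough for part (a) when $|t| \asymp 1$: there $\mb{D}(1,n^{it};q)^2 = \log\log q + O(1)$ dominates the available lower bound on $\mb{D}(\chi,1;q)^2$, and the triangle-inequality deduction of $\mb{D}(\chi,n^{it};q)^2 \to \infty$ collapses. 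Separately, you cannot invoke \eqref{eq:L1Prod} here, as it is derived precisely under the assumption that $\chi$ is non-exceptional.

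The paper circumvents both issues by reversing the logical order. It first proves $|L(1,\chi)| \ll \eta\log q = o(\log q)$ directly from $L(\beta,\chi)=0$ by comparing the truncations $\sum_{n\le q}\chi(n)n^{-\beta}$ and $\sum_{n\le q}\chi(n)/n$ (no pretentious distance enters). It then combines this with a lower bound for $\tfrac 1q\sum_{n\le q}(1*\chi)(n)$ from \cite{GSMinTrunc} to obtain $\mb{D}(\chi,1;q)^2 \ge \log\log(1/\theta)$, and hence part (b) via \eqref{eq:HTbd}. Finally, part (a) for general $|t|\ll 1$ is deduced from part (b) through the partial-summation bound \eqref{eq: Lbd}, which converts $|S(\chi,x)| = o(x)$ uniformly for $x\in[q^c,q]$ into $|L(1+it,\chi)| = o(\log q)$ without any appeal to $\mb{D}(\chi,n^{it};q)$.
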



\begin{proof} 
a) ($t=0$):\ As $\chi$ is exceptional it must be real, and there is a $\beta \in ( 0 ,1)$ such that $L(\beta, \chi) = 0$ with $\eta := (1-\beta) \log q = o(1)$. 
By the truncation argument in \eqref{eq:L1Trunc},
\begin{align*}
L(\beta,\chi) &= \sum_{n \leq q} \frac{\chi(n)}{n}n^{1-\beta} + O\bigg(\frac{M(\chi)}{q^{\beta}}\bigg) = \sum_{n \leq q} \frac{\chi(n)}{n}\bigg(1 + O(\eta)\bigg) + O(q^{\tfrac 12-\beta} \log q) \\
&= \sum_{n \leq q} \frac{\chi(n)}{n} + O\bigg( \eta \log q \bigg)
\end{align*}
since $\eta \gg q^{-o(1) }$. By \eqref{eq:L1Trunc} we deduce that for any $\epsilon > 0$, 
\begin{align}\label{eq: SiegelL1Trunc}
|L(1,\chi)|= \bigg|\sum_{n \leq q} \frac{\chi(n)}{n}\bigg|+O(1/q^\epsilon)  \ll \eta  \log q
\end{align}
since $L(\beta,\chi) = 0$, which implies a).

 b) \  We use the above to observe that
$$
\frac{1}{q}\sum_{n \leq q} (1 \ast \chi)(n) = \frac{1}{q}\sum_{n \leq q} \chi(n) \llf \frac{q}{n}\rrf = \sum_{n \leq q} \frac{\chi(n)}{n} + O(1)  \ll \eta \log q + 1;
$$
on the other hand we have
$$
\frac{1}{q}\sum_{n \leq q} (1 \ast \chi)(n) \gg e^{-u e^{u/2}} \log q + O(1) \text{ where } u= \mb{D}(\chi,1;q)^2
$$ 
by \cite[(3.5)]{GSMinTrunc}, so that $\mb{D}(\chi,1;q)^2 = u \geq   \log\log( \tfrac 1\theta ) + O(1)$ where 
$\theta := \max\{ \eta, \tfrac 1{\log q} \}$.\footnote{One can obtain the better lower bound $\mb{D}(\chi,1;q)^2\geq \{ 2+o(1)\} \log\log( \tfrac 1\eta )$ by summing \cite[(3.23)]{TT} over all $m\ll \sqrt{\log 1/\eta}$ (note that their $\eta$ is our $1/\eta$).}
If  $x \in [q^c,q]$ that  $\mb{D}(\chi,1;x)^2 = \mb{D}(\chi,1;q)^2 +O(1)\geq \log\log( \tfrac 1\theta ) + O(1)$. 
 Therefore since $\chi$ is real, Hall and Tenenbaum's estimate \eqref{eq:HTbd} yields
\[
|S(\chi,x)| \ll x e^{-\tau \mb{D}(\chi,1;x)^2}   \ll \frac{x}{ (\log( 1/\theta ))^{\tau} } = o(x).  
\]

a) ($|t|\ll 1$):\ We insert the bound from (b) into \eqref{eq: Lbd}, and let $c\to 0$ to deduce our result.
\end{proof}

\begin{proof} [Proof of Proposition \ref{prop: LandB}]
We may assume that $\chi$ is an unexceptional character, since the result follows vacuously when 
$\chi$ is exceptional by Proposition \ref{prop: LandB}. 

Now if $|L(1+it,\chi)|\gg \log q$ for some $|t|\ll 1$ and if $c>0$ is sufficiently small  there exists  $x\in [q^c,q]$ for which
$|S(\chi,x)| \gg  x$ by \eqref{eq: Lbd}.

Now suppose that $|S(\chi,x)|\gg x$ for some  $x \in [q^{\kappa},q]$ for some fixed $\kappa \in (0,1]$.
For  a sufficiently large constant  $T$, Hal\'asz's Theorem  \eqref{eq: HalThm}   implies that   $\mathbb{D}(\chi,n^{it};x)\ll 1$ for some $|t|\leq T$, and so 
\begin{align}\label{eq:DchiTBd}
\mathbb{D}(\chi,n^{it} ; q)^2 = \mathbb{D}(\chi,n^{it};x)^2  + \sum_{x<p \leq q } \frac{1-\text{Re}(\chi(p)p^{-it})}{p} \ll 1 + 2\log\bigg( \frac{\log q}{\log x} \bigg) \ll_{\kappa} 1.
\end{align}
Then \eqref{eq:L1Prod} implies that $|L(1+it,\chi)|\gg_\kappa \log q$ as $\chi$ is non-exceptional.

Suppose now that $\chi^k=\chi_0$ with $k \ll 1$. As $x > q^{\kappa}$ we note then that
$$
x \ll |S(\chi,x)|\leq \#\{ n\leq x: (n,q)=1\} \ll \frac{\phi(q)}q x,
$$
which implies that $\sum_{p|q} \tfrac 1p \ll 1$. 
Next, repeatedly using the triangle inequality \eqref{eq: triIneq} for $\mathbb{D}$ together with \eqref{eq:DchiTBd},
\[
\mathbb{D}(1,n^{ikt} ; q)=\mathbb{D}(\chi^k,n^{ikt} ; q)+O\bigg( 1+  \sum_{p|q} \frac 1p \bigg) \leq k\, \mathbb{D}(\chi,n^{it} ; q)+O(1)\ll_{\kappa} 1.
\]
By \eqref{eq: Dfornit} we deduce that $\log(1+k|t|\log q)\ll_{\kappa} 1$, so that $|t| \ll_{\kappa} \tfrac 1{\log q}$. It follows then that $\mathbb{D}(\chi,1;q)^2=\mathbb{D}(\chi,n^{it},q)^2+O_{\kappa} (1)\ll 1$, and so  
$|L(1,\chi)| \asymp \log q \, e^{-\mb{D}(\chi,1;q)^2} \gg_{\kappa} \log q$ by \eqref{eq:L1Prod}.
\end{proof}

We would like to deduce that $|L(1,\chi)|\gg \log q$ from $|L(1+it,\chi)|\gg \log q$ with $|t|\ll 1$,  for characters of higher order. This is not necessarily guaranteed, though we can prove the following.

\begin{lemma} Let $\chi$ be a complex character mod $q$ and fix $T\geq 1$.  
If $|L(1+it,\chi)|\gg \log q $ with $|t|\leq T$ then
$|L(1+it_0,\chi)|\gg \log q $ where $t_0 = t(\chi; q, T)$ and 
$ |t-t_0|\ll \tfrac 1{\log q}$.
\end{lemma}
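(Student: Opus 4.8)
The plan is to use the non-exceptional case as the main tool and then handle the exceptional case separately. First I would observe that if $\chi$ is exceptional then by Lemma~\ref{lem: excepChi}(a) we have $|L(1+it,\chi)| = o(\log q)$ for all $|t| \ll 1$, so the hypothesis $|L(1+it,\chi)| \gg \log q$ cannot hold for any such $t$; hence we may assume $\chi$ is non-exceptional. In that case the Euler product truncation \eqref{eq:L1Prod} applies, giving $|L(1+it,\chi)| \asymp (\log q)\, e^{-\mathbb{D}(\chi, n^{it}; q)^2}$ for all $|t| \ll 1$, and in particular for our given $t$ and for $t_0 = t(\chi; q, T)$. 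So the hypothesis $|L(1+it,\chi)| \gg \log q$ forces $\mathbb{D}(\chi, n^{it}; q)^2 \ll 1$.

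Next I would unwind the definition: $t_0 = t(\chi; q, T)$ is by definition a point in $[-T,T]$ at which $\mathbb{D}(\chi, n^{i\tau}; q)^2$ is minimized over $|\tau| \le T$, so $\mathbb{D}(\chi, n^{it_0}; q)^2 \le \mathbb{D}(\chi, n^{it}; q)^2 \ll 1$. Applying \eqref{eq:L1Prod} again in the other direction then gives $|L(1+it_0,\chi)| \asymp (\log q)\, e^{-\mathbb{D}(\chi, n^{it_0}; q)^2} \gg \log q$, which is the first claimed conclusion. For the proximity statement $|t - t_0| \ll 1/\log q$, I would use the triangle inequality \eqref{eq: triIneq}:
\[
\mathbb{D}(n^{it}, n^{it_0}; q) \le \mathbb{D}(\chi, n^{it}; q) + \mathbb{D}(\chi, n^{it_0}; q) \ll 1,
\]
so $\mathbb{D}(1, n^{i(t - t_0)}; q)^2 \ll 1$. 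Since $|t - t_0| \le 2T \ll 1$, the first branch of \eqref{eq: Dfornit} gives $\log(1 + |t - t_0|\log q) \ll 1$, hence $|t - t_0| \ll 1/\log q$, exactly as in the argument already used at the end of the proof of Proposition~\ref{prop: LandB}.

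The only genuinely delicate point is making sure that all the implied constants are uniform in the right way: the constant in $|L(1+it_0,\chi)| \gg \log q$ should depend only on $T$ and on the constant in the hypothesis $|L(1+it,\chi)| \gg \log q$, which is fine because \eqref{eq: Dfornit} and \eqref{eq:L1Prod} hold with absolute implied constants and $|t|, |t_0| \le T$. I do not expect any real obstacle here; the statement is essentially a repackaging of the non-exceptional analysis already carried out for Proposition~\ref{prop: LandB}, together with the observation that the distance functional is minimized at $t_0$ by construction and the standard rigidity of the family $\{n^{i\tau}\}$ encoded in \eqref{eq: Dfornit}.
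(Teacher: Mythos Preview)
Your proof is correct and follows essentially the same route as the paper: reduce to the non-exceptional case, use \eqref{eq:L1Prod} to translate the hypothesis into $\mathbb{D}(\chi,n^{it};q)\ll 1$, invoke minimality of $t_0$ to get $\mathbb{D}(\chi,n^{it_0};q)\ll 1$ and hence $|L(1+it_0,\chi)|\gg\log q$, and finish with the triangle inequality \eqref{eq: triIneq} plus \eqref{eq: Dfornit}. The only cosmetic difference is that the paper dispatches the exceptional case in one line by noting that a \emph{complex} character is not real and hence not exceptional, whereas you take the (equally valid) detour through Lemma~\ref{lem: excepChi}(a).
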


\begin{proof} Since $\chi$ is not real, it is non-exceptional. 
By \eqref{eq:L1Prod} we see that
\[
|L(1+it,\chi)| \gg \log q \text{ if and only if } \mb{D}(\chi,n^{it};q) \ll 1.
\]
Let $t_0 = t(\chi; q, T)$ so that $\mb{D}(\chi , n^{it_0} ;q) \leq \mb{D}(\chi , n^{it} ;q)\ll 1$, and therefore
$|L(1+it_0,\chi)| \gg \log q$ by \eqref{eq:L1Prod}. Moreover
\[
\mb{D}(1,n^{i(t-t_0)};q) =\mb{D}(n^{it_0},n^{it};q) \leq \mb{D}(\chi,n^{it};q) + \mb{D}(\chi , n^{it_0} ;q) \ll 1
\]
by \eqref{eq: triIneq},
and so we deduce that $|t-t_0| \ll \tfrac 1{\log q}$ by \eqref{eq: Dfornit}.
\end{proof}


\section{A variant of Hal\'asz's Theorem} \label{sec: uppBdLog}
In this section we will prove Propositions \ref{prop: logsum} and \ref{prop:lambdaOpt}(a), our various upper bounds for logarithmic averages.

\begin{proof} [Proof of Proposition \ref{prop: logsum}] As in the proof of Lemma \ref{lem: excepChi},
\[
\sum_{m\leq x} (1*f)(m)=\sum_{n \leq x} f(n) \left\lfloor \frac{x}{n} \right\rfloor = x \sum_{n \leq x} \frac{f(n)}{n} +O(x).
\]
Applying \eqref{eq: genHal}   to the mean value of $1*f$  with $\kappa=2$, we then obtain
\[
\sum_{n \leq x} \frac{f(n)}{n}  =\frac 1x\sum_{m\leq x} (1*f)(m)+O(1)
\ll (1+M) e^{ -M } \log x +1 \
\]
where $e^{-M}(\log x)^2=| \tfrac{1}s\zeta(s)F(s)| $ with $s = 1 + 1/\log x + it$, for some real $t, |t|\leq (\log x)^2$.
We have $|\zeta(s)| \leq\zeta(1 + \frac 1{\log x})\ll \log x$,
\begin{equation} \label{eq:BdFs}
|F(s)| \asymp \zeta(1+\tfrac 1{\log x})\exp(-\mathbb D(f,n^{it};x)^2) \asymp (\log x)\exp(-\mathbb D(f,n^{it};x)^2)\leq \log x;
\end{equation}
and $M\ll \log\log x$.
If $|t|\geq 1$ then 
$|\zeta(s)|\ll \log (2+|t|)$, and so
\[
\bigg|\sum_{n \leq x} \frac{f(n)}{n}\bigg| \ll (1+M) \frac{|F(s)|}{\log x} \frac{\log(2+|t|)}{1+|t|} \ll \log\log x.
\]
We henceforth assume that $|t|\leq 1$. We have 
\[
|\zeta(s)/s| \asymp \log y_t \text{ where } y_t=\min\{ e^{1/|t|},x\},
\]
and  
\[
e^{-M} = \frac{|\zeta( s) F( s)/s|}{(\log x)^2} \asymp \exp\bigg( - \min_{|\tau| \leq 1} (\mb{D}(1,n^{i\tau};x)^2 + \mb{D}(f,n^{i\tau};x)^2 ) \bigg).
\]
By \eqref{eq: Dfornit}, $\mb{D}(1,n^{it};x)^2 = \log(\frac{\log x}{\log y_t}) + O(1)$, so we deduce that
\begin{align}
\sum_{n \leq x} \frac{f(n)}{n} &\ll \log y_t \, \exp(- \mb{D}(f,n^{it};x)^2 ) \cdot \mathcal L \label{eq:ytBd} \\
&\asymp \mc{L} \, e^{ -\mc{L} } \log x, \label{eq:mcLBd}
\end{align}
where we have set
\[
\mathcal L := 1 + \mb{D}(f,n^{it};x)^2 + \log(\tfrac {\log x}{\log y_t}) \ll \log\log x.
\]  
From \eqref{eq:ytBd} and $\mb{D}(f,n^{it};x)^2 \geq M(f; x,1)$ we obtain \eqref{eq:GS.2} when $|t| \leq \tfrac 1{\log x}$ and \eqref{eq:GS2} when $\tfrac 1{\log x} < |t| \leq 1$.  
\end{proof}

The next proof continues on using the results in the previous proof:

 \begin{proof} [Proof of Proposition \ref{prop:lambdaOpt}(a)] If $|t| \leq \tfrac 1{\log x}$ then $\mb{D}(f , 1 ; x) = \mb{D}(f, n^{it} ; x) + O(1)$, and in this case we also obtain \eqref{eq:GS2b} (for the previous proof) with the better constant $1$ in place of $\lambda$.
 
 For $\tfrac 1{\log x}\leq |t|\leq 1$, we now prove the lower bound   $\mc{L} \geq \lambda \, \mb{D}(f,1;x)^2+O(1)$. When we  substitute this into \eqref{eq:mcLBd}, we obtain \eqref{eq:GS2b}
 since $y \mapsto ye^{-y}$ is a decreasing function for $y \geq 0$. 

First, as $p^{-it}=1+O(|t|\log p)$ when $p \leq y_t$, we obtain
  \[ 
\mathbb D(f,n^{it};y_t)^2-\mathbb D(f,1;y_t)^2 =
\sum_{p\leq y_t} \frac{  \text{Re} (f(p)(1-p^{-it} ) )}p\ll  |t| \sum_{p\leq y_t}  \frac{\log p}p 
\ll 1.
\]

The prime number theorem implies that 
\begin{align}\label{eq:optimize}
\sum_{y_t<p\leq x} \frac{ \text{Re} (f(p)(p^{-it} -\lambda) )}p
 \leq \sum_{y_t<p\leq x} \frac{ |p^{-it} -\lambda|}p  
&= \bigg(\int_0^1 |e(u) - \lambda| du \bigg) \log\bigg(\frac{\log x}{\log y_t}\bigg) + O(1) \nonumber\\
&=  (2-\lambda) \sum_{y_t<p\leq x} \frac{ 1 }p+O(1) ,
\end{align}
using the definition of $\lambda$. Re-organised, this gives
\[
 \mathbb D(f,n^{it};y_t,x)^2+\log\bigg(\frac{\log x}{\log y_t}\bigg) + O(1)  \geq \lambda \, \mathbb D(f,1;y_t,x)^2
+ O(1),
\]
so that 
\begin{align*}
\mc{L} &= (\mb{D}(f,n^{it};y_t,x)^2 + \log(\tfrac {\log x}{\log y_t})) + \mb{D}(f,n^{it};y_t)^2 +1\\
& \geq \lambda \, \mb{D}(f,1;y_t,x)^2 + \mathbb{D}(f,1;y_t)^2 +O(1) \geq \lambda \, \mb{D}(f,1;x)^2 + O(1). \qedhere
\end{align*}
\end{proof}

\section{Large character sums}

\subsection{Consequences of repulsion}
Suppose that we are given a primitive character $\chi \pmod q$.
Fix $A>0$.   For each  primitive character $\psi \pmod \ell$ with   $\ell\leq (\log q)^A$ select $|t| \le 1$ for which 
$\mathbb{D}(\chi ,\psi \, n^{it};q)$ is minimized. Index the pairs $(\psi, t)$ so
that  $(\psi_j, t_j)$ is the pair that gives the $j$-th smallest
distance $\mathbb{ D}(\chi ,\psi_j \, n^{it_j};q)$ (breaking ties arbitrarily if needed).  
A simple modification of \cite[Lem. 3.1]{BaGrSo} shows that for each $k\geq 2$ we
have
\[
 \mathbb{ D}(\chi ,\psi_k \, n^{it_k};q)^2 \ge (c_k+o(1)) \log \log q 
\]
where $c_k\geq 1-\tfrac 1{\sqrt{k}}$. As any $1 \leq n \leq N \leq q$ has $P(n) \leq q$, \cite[Thm. 6.4]{LamMan} yields
\begin{equation} \label{eq: logChiUppBd}
\sum_{n\leq N} \frac{(\chi\bar{\psi_k})(n)}n 
= \sum_{ \substack{ n \leq N \\ P(n) \leq q} } \frac{(\chi\bar{\psi_k})(n)}n 
\ll (\log q) e^{-\mb{D}(\chi,\psi_k \, n^{it_k}; q)^2} + 1 
\ll (\log q)^{1-c_k + o(1)}.
\end{equation}
Under the additional hypothesis that $\psi_1\psi_2$ is an even character (which follows if we restrict attention to $\psi$ with $(\chi\psi)(-1)=-1$) we can take any
$c_2 > 1-\tfrac 2\pi$ as we will see below in Lemma \ref{lem: 2/Pi}.

 For each integer $k\geq 1$ we define
 \[
 \gamma_k := \frac{1}{k} \sum_{a=0}^{k-1} | \cos(\pi a/k)| = \frac{1}{k} \sum_{a=0}^{k-1} \big| 1+e\big( a/k \big)\big| .
\]
Using
the Fourier expansion 
 \begin{align} \label{eq_fourier}
|1+e(\alpha)| = \frac{4}{\pi}\bigg(1- \sum_{d \neq 0} \frac{(-1)^{d}}{4d^2-1} e(d\alpha)\bigg), \quad \alpha \in \mb{R}/\mb{Z},
\end{align}
we easily deduce that
 \begin{equation} \label{eq: ckdefn}
 \gamma_k= \frac 2\pi \Bigg(  1   - 2\sum_{\substack{r\geq 1 \\ k|r}} \frac{(-1)^r} {4r^2-1} \Bigg) = \begin{cases} 
 \frac {\text{\rm cosec}(\pi/2k)}{k} &\text{if} \ k \ \text{is odd}, \\
 \frac   {\cot(\pi/2k)}k &\text{if} \ k \ \text{is even};
 \end{cases}   
\end{equation}
(see also Lemma 5.2 of \cite{GHS}).
Therefore $\gamma_k= \tfrac 2\pi +O(\tfrac 1 {k^2})$, and the   $\gamma_k$, with $k$ even, increase towards $\tfrac 2\pi=0.6366\ldots$:
 \[
 \gamma_2=\tfrac 12< \gamma_4=0.6035\ldots < \gamma_6 = 0.6220\ldots < \gamma_8 =0.6284\ldots
 \]
 whereas the $\gamma_k$, with $k$ odd, decrease towards $\tfrac 2\pi$:
 \[
\gamma_1=1> \gamma_3=\tfrac 23> \gamma_5=0.6472\ldots > \gamma_7 = 0.6419\ldots > \gamma_9 =0.6398\ldots
 \]
For $k>1$, we have $\gamma_k\leq \tfrac 23$.

\begin{lemma} \label{lem: 2/Pi}
Fix $C > 0$ and let $m \leq (\log q)^{C}$. 
Let $t_1,t_2 \in \mb{R}$ be chosen such that $t := t_2-t_1$ satisfies $|t| \ll 1$. 
Let $\chi_1$ and $\chi_2=\chi_1\xi$ be characters with modulus in $[q,q^2]$, where $\xi \pmod m$ is an odd character. 
Suppose that 
 $\mathbb{ D}(\chi_2, n^{it_2}; q)  \geq \mathbb{ D}(\chi_1, n^{it_1}; q)$. Then for any $\eps > 0$,
 \[
 \mathbb{ D}(\chi_2, n^{it_2};q)^2   \geq ( 1 - \tfrac{2}{\pi} - \eps \theta )\log\log q  + O(\log\log\log q),
\]
where $\theta := 1$ if $\xi^j$ is exceptional for some $1 \leq j \leq \min\{ \phi(m) , \log\log q \}$, and $\theta := 0$ otherwise.
\end{lemma}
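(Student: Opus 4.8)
The plan is to reduce the two-character bound to the one-character ``repulsion'' estimate \cite[Lem.\ 3.1]{BaGrSo} by exploiting the fact that $\chi_1$ and $\chi_2=\chi_1\xi$ both pretend to be twists of $n^{it_j}$, and then use the oddness of $\xi$ together with the Fourier expansion \eqref{eq_fourier} of $|1+e(\alpha)|$ to gain the $\tfrac2\pi$ rather than merely $1$. Concretely, first I would set $t=t_2-t_1$ and observe that, by the triangle inequality \eqref{eq: triIneq}, a lower bound on $\mb{D}(\chi_1\bar{\chi_1}\xi\cdot n^{-it_1}, n^{it};q)=\mb{D}(\xi, n^{it};q)$-type quantities comes from comparing the two minimizing points. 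The key identity is that for each prime $p\nmid m$,
\[
(1-\text{Re}(\chi_1(p)p^{-it_1})) + (1-\text{Re}(\chi_2(p)p^{-it_2})) \geq \big|1 - \xi(p)p^{-it}\big| \cdot \big(\text{something}\big),
\]
but more usefully one writes $2-\text{Re}(\chi_1(p)p^{-it_1})-\text{Re}(\chi_2(p)p^{-it_2}) \geq 1 - |{\textstyle\frac12}(1+\xi(p)p^{-it})|\cdot 2\,|\chi_1(p)p^{-it_1}|$ — i.e.\ using $|a|+|b|\geq|a+b|$ with $a=1-\chi_1(p)p^{-it_1}$, $b=1-\chi_2(p)p^{-it_2}=1-\chi_1(p)\xi(p)p^{-it_1}p^{-it}$, so that $a+b = 2 - \chi_1(p)p^{-it_1}(1+\xi(p)p^{-it})$ and hence $|a|+|b|\geq 2 - |1+\xi(p)p^{-it}|$. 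Summing $\tfrac1p$ over $p\le q$ and using $\mb{D}(\chi_2,n^{it_2};q)\ge\mb{D}(\chi_1,n^{it_1};q)$ gives
\[
2\,\mb{D}(\chi_2,n^{it_2};q)^2 \geq \mb{D}(\chi_1,n^{it_1};q)^2 + \mb{D}(\chi_2,n^{it_2};q)^2 \geq \sum_{\substack{p\le q\\ p\nmid m}} \frac{2 - |1+\xi(p)p^{-it}|}{p} + O(1).
\]

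The heart of the matter is then to show $\sum_{p\le q,\, p\nmid m} \frac{|1+\xi(p)p^{-it}|}{p} \leq (2 - c)\log\log q + O(\log\log\log q)$ for $c$ close to $1-\tfrac2\pi$, equivalently $\sum_{p\le q} \frac{|1+\xi(p)p^{-it}|}{p} \le \big(\tfrac2\pi + \eps\theta\big)\log\log q + \cdots$. Here I would expand $|1+\xi(p)p^{-it}| = \frac4\pi\big(1 - \sum_{d\ne0}\frac{(-1)^d}{4d^2-1}\xi(p)^{\,?}\cdots\big)$ — more precisely, apply \eqref{eq_fourier} with $\alpha$ chosen so that $e(\alpha)=\xi(p)p^{-it}$ is not literally of that form unless $\xi(p)$ is a root of unity; since $\xi$ has order dividing $\phi(m)$, write $\xi(p)=e(a_p/k)$ for $k=$ ord$(\xi)$, giving $|1+\xi(p)p^{-it}| = \frac4\pi\big(1 - \sum_{d\ne0}\frac{(-1)^d}{4d^2-1}\xi(p)^d p^{-idt}\big)$. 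Then
\[
\sum_{p\le q}\frac{|1+\xi(p)p^{-it}|}{p} = \frac4\pi\Big(\sum_{p\le q}\frac1p - \sum_{d\ne0}\frac{(-1)^d}{4d^2-1}\sum_{p\le q}\frac{\xi^d(p)p^{-idt}}{p}\Big).
\]
For each $d$ with $\xi^d\ne\xi_0$ the inner prime sum is $O(\log\log\log q)$ by the prime number theorem for the $L$-function of the primitive character inducing $\xi^d$ (using that $m\le(\log q)^C$, so zero-free regions give this bound) — \emph{unless} $\xi^d$ is exceptional, in which case that sum can be as large as $-\log\log q + o(\log\log q)$, contributing with a sign that could be unfavorable; this is exactly where the $\eps\theta\log\log q$ loss enters. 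For $d$ with $\xi^d=\xi_0$ (i.e.\ $k\mid d$) the inner sum is $\sum_{p\le q}\frac{p^{-idt}}{p} = \log\log q + O(1)$ if $|dt|\ll 1/\log q$ and smaller otherwise; the main term $\frac4\pi(1 - 2\sum_{d\ge1,k\mid d}\frac{(-1)^d}{4d^2-1}) = 2\gamma_k \le \tfrac43 < 2$ by \eqref{eq: ckdefn}, and in fact $\gamma_k\le\tfrac2\pi+O(1/k^2)$, with $\gamma_k\le\tfrac23$ for $k>1$.

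Assembling: the main term gives $\le \gamma_k \log\log q + O(1) \le (\tfrac2\pi + O(1/k^2))\log\log q$, and since $k=\text{ord}(\xi)\le\phi(m)$ with, crucially, $k\ge2$ because $\xi$ is odd (an odd character cannot be trivial or — being a character — have order $1$; oddness forces $2\mid k$ indeed), we get $\gamma_k\le\tfrac23$, hence a clean bound $\mb{D}(\chi_2,n^{it_2};q)^2 \ge (1-\tfrac23)\log\log q + \cdots$ always, improved to $(1-\tfrac2\pi-\eps)\log\log q$ once $k$ is large, with the exceptional-character terms absorbed into $\eps\theta\log\log q$. The main obstacle, and the step deserving the most care, is controlling the non-principal prime sums $\sum_{p\le q}\xi^d(p)p^{-idt}/p$ uniformly in $d$: one needs the range of $d$ that matters (small $d$, say $|d|\le\log\log q$, since the tail $\sum_{|d|>D}\frac1{4d^2-1}\ll 1/D$ is negligible) and a Siegel–Walfisz-type bound valid for moduli up to $(\log q)^C$, together with careful bookkeeping of which $\xi^j$ can be exceptional — there is at most one exceptional modulus below $q^2$, pinning down $\theta$ and the range $1\le j\le\min\{\phi(m),\log\log q\}$ in the statement. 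The factor $p^{-idt}$ with $|t|\ll1$ only helps (it shifts $\xi^d$ toward a character times $n^{i(dt)}$ with bounded $dt$ for relevant $d$), so \eqref{eq: Dfornit} handles it.
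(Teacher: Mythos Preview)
Your overall strategy coincides with the paper's proof: average the two distances, reduce to bounding $\sum_{p\le q}\tfrac{|1+\xi(p)p^{-it}|}{p}$, expand via \eqref{eq_fourier}, and estimate the prime sums $S_d=\sum_{p\le q}\xi(p)^d p^{-idt}/p$ according to whether $k\mid d$ or not. The treatment of the non-principal $S_d$ via Siegel--Walfisz (with the $\eps\theta$ loss in the exceptional case) is also the same.

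There is, however, a genuine gap in your final step. You correctly observe that oddness of $\xi$ forces $2\mid k$, but you then only invoke $\gamma_k\le \tfrac23$ and claim the $(1-\tfrac2\pi)$ constant is achieved ``once $k$ is large''. This is not how the constant arises, and as written your argument only yields $(1-\tfrac23)\log\log q$ for small even orders like $k=2$ or $k=4$, which does not prove the lemma. The point (stated just above \eqref{eq: ckdefn} in the paper) is that the even subsequence $\gamma_2<\gamma_4<\gamma_6<\cdots$ \emph{increases} to $\tfrac2\pi$, so $\gamma_k<\tfrac2\pi$ for \emph{every} even $k$. Thus the term $(\tfrac2\pi-\gamma_k)\log(\min\{\log q,\tfrac1{|t|}\})$ coming from the $k\mid d$ contributions is nonnegative for all odd $\xi$, and the uniform bound $(1-\tfrac2\pi-\eps\theta)\log\log q$ follows immediately. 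Once you replace ``$\gamma_k\le\tfrac23$'' by ``$\gamma_k\le\tfrac2\pi$ since $k$ is even'', the argument is complete.

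A minor remark: your inequality $|a|+|b|\ge|a+b|$ is a detour. What you actually need (and what the paper uses) is simply
\[
(1-\text{Re}(\chi_1(p)p^{-it_1}))+(1-\text{Re}(\chi_2(p)p^{-it_2}))=2-\text{Re}\big(\chi_1(p)p^{-it_1}(1+\xi(p)p^{-it})\big)\ge 2-|1+\xi(p)p^{-it}|,
\]
since $|\chi_1(p)p^{-it_1}|\le 1$.
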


\begin{proof} Suppose that $\xi \pmod m$ has order $k$. 
We note that
\begin{align}
\mb{D}( \chi_2 , n^{it_2} ; q)^2 &\geq \frac{1}{2} \bigg( \mb{D}( \chi_2 , n^{it_2} ; q)^2 + \mb{D}( \chi_1 , n^{it_1} ; q)^2 \bigg)  \notag \\
&= \log\log q - \frac{1}{2}\text{Re}\bigg( \sum_{p \leq q} \frac{\chi_1(p)p^{-it_1} + \chi_2(p)p^{-it_2}}{p} \bigg) + O(1) \notag\\
&= \log\log q - \frac{1}{2}\text{Re}\bigg( \sum_{p \leq q} \frac{\chi_1(p)p^{-it_1} (1+ \xi(p)p^{-it}) }{p} \bigg) + O(1) \notag \\
&\geq \log\log q - \frac{1}{2}   \sum_{p \leq q} \frac{ |1+ \xi(p)p^{-it}| }{p}  + O(1)\label{eq:useful}
\end{align}
Using \eqref{eq_fourier} we deduce that 
\begin{align*}
\sum_{ p \leq q } \frac{ | 1 + \xi(p)p^{-it} | }{p} 
= \frac{4}{\pi} \bigg( \log\log q - \sum_{1 \leq |d| \leq D} \frac{ (-1)^{d} }{ 4d^2-1} S_d\bigg) + O(1),
\end{align*}
where  $D := \log\log q$ and
$$
S_d := \sum_{p \leq q} \frac{ \xi(p)^d }{ p^{1+idt} }.
$$
If $k$ divides $d$ then $\xi(p)^d = 1_{p\nmid m}$, and so 
\begin{align*}
S_d &= \sum_{p \leq q} \frac{1}{ p^{1+idt} } + O\bigg( \sum_{p|m} \frac{1}{p} \bigg) = \sum_{p \leq q} \frac{1}{ p^{1+idt} } + O(\log\log\log\log q).
\end{align*}
The sum over $p\leq z := \min\{q, e^{2\pi/|dt|}\}$ equals 
\begin{align*}
\sum_{p \leq z} \frac 1p + O\bigg(|dt| \sum_{p\leq z} \frac{\log p}{p}\bigg) &=  \log( \min\{ \log q ,  \tfrac { 2\pi }{ |dt| } \} ) +O(1) \\
&= \log( \min\{ \log q , \tfrac 1{|t|} \} ) + O( \log\log \log q ).
\end{align*}
The remaining set of primes $z < p \leq q$ is non-empty only if $z= e^{2\pi/|dt|}$, which happens when $|dt| > \frac{2\pi}{\log q}$. Let $\sigma := \tfrac{dt}{|dt|} \in \{-1,+1\}$.  Applying the prime number theorem,
\begin{align*}
\sum_{z<p \leq q} \frac{1}{ p^{1+idt} } =  \int_{e^{2\pi/|dt|}}^q u^{-idt} \frac{du}{u\log u} + O(1)=\int_1^{X} e(-\sg v) \frac{dv}{v} +O(1)=O(1),
\end{align*}
putting $v = \tfrac{|dt| \log u}{2\pi}$ and $X = \tfrac{ \log q}{\log z}$ ($\geq 1$) and then integrating by parts.
We deduce that if $k$ divides $d$ with $d\leq D$ then
$S_d = \log(\min\{ \log q, \tfrac 1{|t|} \} ) + O(\log \log\log q)$.

If $k$ does not divide $d$ let $\alpha:=\xi^d$, a non-principal character mod $m$, and let $T=dt$ so that $|T|\ll D$. 
For $\epsilon > 0$ small, let 
\[
Y := \begin{cases} \exp( (\log m)^{ 10 } + T ) &\text{ if $\alpha$ is non-exceptional,} \\
			\exp( ( \log q )^{ \epsilon }) &\text{ if $\alpha$ is exceptional.}
\end{cases}
\] 
Trivially bounding the primes $p \leq Y$ we deduce that 
\[
S_d = \sum_{Y<p \leq q} \frac{ \alpha(p) }{ p^{1+iT} }  + O(\log\log Y ).
\]
Since $\psi(y,\alpha) \ll \tfrac y{\log y}$ for $y \geq Y$ by the Siegel-Walfisz theorem when $\alpha$ is exceptional,
and using  \eqref{eq:PNTAP} otherwise,
this is
\begin{align*}
\int_Y^q \frac{d \psi(y,\alpha)}{y^{1+iT}\log y}+O\bigg( \frac 1Y \bigg) &= \left[\frac{ \psi(y,\alpha)}{y^{1+iT} \log y} \right]_Y^q+ (1+iT)\int_Y^q \frac{\psi(y,\alpha)}{y^{2+iT}\log y} dy + O\bigg( \frac 1{ (\log Y)^2 } \bigg)\\
&\ll \frac{ 1}{(\log Y)^2} + T \int_Y^q \frac{ 1}{y(\log y)^2} dy \ll 1.
\end{align*}
We conclude that $S_d \ll \log\log Y$ whenever $k \nmid d$.

Putting these estimates into \eqref{eq: ckdefn}, we find that
\begin{align*}
\sum_{p \leq q} \frac{ |1 + \xi(p)p^{-it}| }{p} &= \frac{4}{\pi} \log\log q + 2 \bigg(\gamma_k - \frac{2}{\pi} \bigg) \log( \min\{ \log q, \tfrac 1{|t|} \} )  \\
&+ O(\log\log\log q +   \log\log Y),
\end{align*}
and $\log\log Y\ll  \eps \theta \log\log q + \log\log\log q$.
Changing $\epsilon$ by a (possibly ineffective) constant factor if needed, we deduce from \eqref{eq:useful} that
\begin{align*}
\mathbb{ D}(\chi_2, n^{it_2}; q)^2   \geq (1 - \tfrac{2}{\pi} - \epsilon\theta) \log\log q +   (\tfrac{2}{\pi}-\gamma_k ) \log ( \min\{ \log q, \tfrac 1{|t|} \} )    + O(\log\log\log q).
\end{align*}
Now since $\xi$ is odd, its order $k$ must be even and therefore $\gamma_k\leq \tfrac 2\pi$;
 the result follows since the middle term is $\gg -O(1)$.
 \end{proof}


For any $\epsilon>0$ let $K>1/\epsilon^2$ so that $1-c_K<\epsilon$. 
As a consequence of \eqref{eq: logChiUppBd} and Lemma \ref{lem: 2/Pi}, established just below, we have 
\begin{equation} \label{eq: chsumbd}
\max_{1 \leq N \leq q} \bigg|\sum_{n\leq N} \frac{(\chi\bar{\psi})(n)}n \bigg| =
\begin{cases} O( (\log q)^{\epsilon} ) &\text{ if } \psi\ne \psi_k \text{ for all } k<K,\\
O_{\eps}( (\log q)^{\tfrac 2\pi+\eps} ) &\text{ if } \psi= \psi_k \text{ for some } k, 1<k <K.
\end{cases}
\end{equation}
The implicit constant in the second case is effective as long as $\psi^j$ is non-exceptional for all $j \geq 1$, and in this case $\tfrac 2\pi + \eps$ can be replaced by $\tfrac 2\pi +o(1)$.

\subsection{Working with large character sums}
In this subsection we set the stage for the proof of Proposition \ref{prop: BoundsOnMchi}. We recall that $\Delta \in (\tfrac 2 \pi, 1)$   and $q \geq 3$ are given, and
$$
R_q := \exp( \tfrac {(\log q)^\Delta }{\log\log q} ), \quad r_q := (\log q)^{2-2\Delta} (\log\log q)^4.  
$$

\begin{proposition} \label{prop:ChiLarge}
For any given primitive character $\chi \pmod q$ there exists a primitive character $\xi \pmod{\ell}$ with $\ell \leq r_q$ 
and $(\chi\xi)(-1)=-1$ such that if $|\alpha - \tfrac bm| \leq \frac{1}{m R_q}$ with $m \leq r_q$ and $N := \min\{ q, \frac{ 1}{ |m\alpha - b|} \}$ then 
\[
\sum_{1 \leq |n| \leq q} \frac{ \chi(n) e(n\alpha)}n = 1_{\ell | m}
\frac  {2\eta g( \xi  )}{\phi(m)} 
 \prod_{p|\tfrac m\ell} (1-(\bar{\chi} \xi)(p))
 \sum_{1 \leq n \leq N } \frac{ ( \chi\bar{\xi} )(n) }{n} +o((\log q)^{\Delta}),
\]
where, whenever $\ell | m$ we write $\ell_q$ to denote the largest divisor of $m/\ell$ that is coprime to $q$, $m_q \ell_q = m/\ell$ and 
$$
\eta :=\mu(m_q)  \chi(\ell_q)   \bar{\xi}(b)  \xi(m_q) \in S^1 \cup \{0\}.
$$ 
\end{proposition}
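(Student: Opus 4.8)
The plan is to reduce to a sum with a \emph{rational} additive phase, expand that phase in Dirichlet characters, and isolate the unique Dirichlet twist of $\chi$ that can produce a main term of size $(\log q)^\Delta$, controlling everything else by the repulsion estimates of the previous subsection. By \eqref{eq: NewSum},
\[
\sum_{1\le|n|\le q}\frac{\chi(n)e(n\alpha)}{n}=\sum_{1\le|n|\le N}\frac{\chi(n)e(nb/m)}{n}+O(\log\log q),
\]
and $O(\log\log q)=o((\log q)^\Delta)$. Since the expansion of $e(\,\cdot\, b/m)$ in Dirichlet characters is legitimate only on residues coprime to $m$, I would write $n=dn'$ with $d=(n,m)$, extract $\chi(d)/d$, and for each $d\mid m$ use $e(n'b/(m/d))=\phi(m/d)^{-1}\sum_{\psi\bmod m/d}\psi(b)g(\bar\psi)\psi(n')$; collecting the contributions of $n'$ and $-n'$ yields
\[
\sum_{1\le|n|\le N}\frac{\chi(n)e(nb/m)}{n}=\sum_{d\mid m}\frac{\chi(d)}{d\,\phi(m/d)}\sum_{\psi\bmod m/d}\psi(b)g(\bar\psi)\bigl(1-(\chi\psi)(-1)\bigr)\sum_{1\le n'\le N/d}\frac{(\chi\psi)(n')}{n'},
\]
so that only $\psi$ with $\chi\psi$ odd contribute.

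Next, order the primitive characters $\psi$ with $\chi\psi$ odd by increasing $\min_{|t|\le1}\mathbb D(\chi,\psi n^{it};q)$, exactly as in the previous subsection, and let $\xi=\psi_1$, of conductor $\ell$, be the closest one; then $\chi\psi_1,\chi\psi_2$ are both odd, hence $\psi_1\psi_2$ is even and Lemma~\ref{lem: 2/Pi}, hence \eqref{eq: chsumbd}, applies. Each $\psi$ above is induced by a primitive character $\bar\psi_j$ of conductor dividing $m/d\le m\le r_q\le(\log q)^A$, and passing from $\psi$ to $\bar\psi_j$ costs only $\ll\log\log q$ (from $1_{(n,M)=1}=\sum_{g\mid(n,M)}\mu(g)$ one gets $|\sum_{n'\le X}(\chi\psi)(n')/n'|\ll(\log\log q)\max_{Y\le X}|\sum_{k\le Y}(\chi\bar\psi_j)(k)/k|$). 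So \eqref{eq: logChiUppBd} and \eqref{eq: chsumbd} bound the inner sum by $(\log q)^{2/\pi+\eps+o(1)}$ when $2\le j<K$ and by $(\log q)^{\eps+o(1)}$ when $j\ge K$; feeding in $|g(\bar\psi)|\le\sqrt{m/d}$, $\sum_{\psi\bmod m/d}|g(\bar\psi)|\le\phi(m/d)^{3/2}$ and $m\le r_q=(\log q)^{2-2\Delta}(\log\log q)^4$, the total contribution of all $\psi$ induced by some $\eta\ne\bar\xi$ is $\ll(\log q)^{2/\pi+\eps+o(1)}+(\log q)^{1-\Delta+\eps+o(1)}=o((\log q)^\Delta)$, using $\Delta>\tfrac2\pi>\tfrac12$, with $K$ as in \eqref{eq: chsumbd} and $\eps$ small. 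A main term can survive only when $\bar\xi$, equivalently $\xi$, has conductor dividing some $m/d$, hence dividing $m$: this is the source of the factor $1_{\ell\mid m}$.

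For the resonant term ($\ell\mid m$), for each $d\mid m/\ell$ the character $\psi$ mod $m/d$ induced by $\bar\xi$ has $\psi(b)=\bar\xi(b)$ and, by the standard formula for Gauss sums of imprimitive characters, $g(\bar\psi)=\mu((m/d)/\ell)\,\xi((m/d)/\ell)\,g(\xi)$ (so $g(\bar\psi)=0$ unless $(m/d)/\ell$ is squarefree and coprime to $\ell$). Expanding the condition $(n',m/d)=1$ by Möbius and using $\log(dg)\ll\log r_q\ll\log\log q$ for $dg\mid m$,
\[
\sum_{\substack{1\le n'\le N/d\\(n',m/d)=1}}\frac{(\chi\bar\xi)(n')}{n'}=\prod_{p\mid m/d}\Bigl(1-\frac{(\chi\bar\xi)(p)}{p}\Bigr)\sum_{1\le n\le N}\frac{(\chi\bar\xi)(n)}{n}+O\bigl((\log\log q)^2\bigr),
\]
and the accumulated error is $\ll\sqrt\ell\,(\log\log q)^2\sum_{d\mid m}\tfrac1{d\phi(m/d)}=(\log q)^{o(1)}$. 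Thus the resonant term equals, up to $o((\log q)^\Delta)$, the quantity $2\bar\xi(b)g(\xi)\bigl(\sum_{n\le N}(\chi\bar\xi)(n)/n\bigr)$ times $\sum_{d\mid m/\ell}\tfrac{\chi(d)\,\mu((m/d)/\ell)\,\xi((m/d)/\ell)}{d\,\phi(m/d)}\prod_{p\mid m/d}\bigl(1-\tfrac{(\chi\bar\xi)(p)}{p}\bigr)$; evaluating this last sum multiplicatively, prime by prime --- using $\xi(p)\bar\xi(p)=1$ for $p\nmid\ell$ and handling separately the primes dividing $q$ or $\ell$ --- collapses it to $\phi(m)^{-1}\mu(m_q)\chi(\ell_q)\xi(m_q)\prod_{p\mid m/\ell}\bigl(1-(\bar\chi\xi)(p)\bigr)$, which is the claimed expression with $\eta=\mu(m_q)\chi(\ell_q)\bar\xi(b)\xi(m_q)$.

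I expect the main obstacle to be exactly this last computation: arranging that the Möbius factor from the imprimitive Gauss sum, the weights $\chi(d)/d$, and the coprimality corrections from truncating $\sum_{(n',m/d)=1}$ all telescope into the single product $\eta\,g(\xi)\,\phi(m)^{-1}\prod_{p\mid m/\ell}(1-(\bar\chi\xi)(p))$, with $\eta$ correctly encoding the degenerate primes dividing $q$ or $\ell$. A secondary, essentially routine, point is verifying that every error term is genuinely $o((\log q)^\Delta)$; this is precisely where the hypothesis $\Delta>\tfrac2\pi$ (via Lemma~\ref{lem: 2/Pi}) and the sizes of $r_q$ and $R_q$ are used.
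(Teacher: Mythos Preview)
Your proposal is correct and follows essentially the same route as the paper: reduce to rational phase via \eqref{eq: NewSum}, split $n=dn'$ with $d=(n,m)$, expand $e(n'b/(m/d))$ in Dirichlet characters, pass to the primitive inducers, isolate the single resonant twist $\xi=\psi_1$ using the repulsion bounds \eqref{eq: chsumbd}, and collapse the remaining divisor sum multiplicatively into $\eta\,g(\xi)\phi(m)^{-1}\prod_{p\mid m/\ell}(1-(\bar\chi\xi)(p))$. One small slip: the inequality $\sum_{\psi\bmod c}|g(\bar\psi)|\le\phi(c)^{3/2}$ fails (try $c$ prime), but the trivial bound $\phi(c)\sqrt{c}$, or the paper's reorganisation by primitive conductor $e\mid m$ giving $\sum_{e\mid m}\sqrt{e}\,\tau(m/e)\phi(e)/\phi(m)\ll\sqrt{m}$, yields the same $(\log q)^{1-\Delta+O(\eps)}$ estimate and nothing else changes.
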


\begin{proof} 
By \eqref{eq: NewSum} we have
\begin{align} \label{eq: NewSum2}
\sum_{1 \leq |n| \leq q} \frac{ \chi(n) e(n\alpha)}{n} = \sum_{1 \leq |n| \leq N} \frac{ \chi(n) e(n \tfrac bm)}{n} + O(\log\log q).
\end{align}
With the intent of replacing exponentials by Dirichlet characters on the right-hand side, we split the $n$th summand according to the common factors of $n$ with $m$. Therefore if $(n,m)=d$ with $m=cd$ and $n=rd$ we have
\begin{align*}
 \sum_{ 1 \leq |n| \leq N } \frac{\chi(n) }{n} e(n\tfrac bm) &= \sum_{ cd = m } \frac{ \chi(d) }{d} \sum_{ 1 \leq |r| \leq N/d \atop (r,c) = 1 } \frac{ \chi(r) }{r} e(r\tfrac bc) \\
&= \sum_{cd = m} \frac{ \chi(d) }{ d \phi(c) } \sum_{ \psi \pmod{c} } \bar{\psi}( b ) g( \psi ) \sum_{ 1 \leq |n| \leq N/d } \frac{ \chi \bar{\psi}(n) }{n} \\
&= 2 \sum_{ cd = m } \frac{ \chi(d) }{ d \phi(c) } \sum_{ \psi \pmod{c} \atop \chi\psi(-1) = -1 } \bar{\psi}(b) g( \psi ) \sum_{1 \leq n \leq N/d} \frac{ \chi \bar{\psi}(n) }{n},
\end{align*}
since if $(k,c)=1$ then $e(\tfrac kc)=\frac{1 }{ \phi(c) } \sum_{ \psi \pmod{c} } \bar{\psi}( k ) g( \psi )$, and then noting
the $n$ and $-n$ terms cancel if $\chi\psi(-1) =1$.

To control the size of $g( \psi )$ we split the sum over characters modulo $c$ according to the primitive characters that induce them.
Since each $\psi$ factors as $\psi^{\ast}\psi_0^{ (f) }$, where $\psi^{\ast}$ is primitive modulo $e$ and $\psi_0^{(f)}$ is principal modulo $f$ with $ef = c$, the right-hand side of the above sum is
\begin{align}
&2 \sum_{efd = m} \frac{ \chi(d) }{ d \phi(ef) } \asum_{ \psi^{\ast}  \pmod{e} \atop \chi\psi^{\ast}(-1) = -1 } \bar{\psi}^{\ast}( b ) g( \psi^{\ast} \psi_0^{(f)} ) \sum_{ 1 \leq n \leq N/d \atop (n,f) = 1 } \frac{ \chi \bar{\psi}^{\ast}(n) }{n} \nonumber\\
&= 2\sum_{ efd = m \atop (e,f)=1 } \frac{ \mu(f)\chi(d) }{ d \phi(ef) } \asum_{ \psi^{\ast} \pmod{e} \atop \chi\psi^{\ast}(-1) = -1 } \psi^{\ast}( f\bar{b} ) g( \psi^{\ast} ) \sum_{1 \leq n \leq N/d \atop (n,f) = 1} \frac{ \chi \bar{\psi}^{\ast}(n) }{n}, \label{eq:decompCharSum}
\end{align}
since $g( \psi^{\ast} \psi_0^{(f)} )=\psi^{\ast}(f)\mu(f) g(\psi^{\ast})$.

Fix $ef|m$ and $\psi^{\ast} \pmod{e}$. We extend the inner sum in \eqref{eq:decompCharSum} to all $n\leq N$ as
\begin{align}\label{eq:removeTrunc}
\sum_{1 \leq n \leq N/d \atop (n,f) = 1} \frac{ \chi \bar{\psi}^{\ast}(n) }{n} = \sum_{1 \leq n \leq N \atop (n,f) = 1} \frac{ \chi \bar{\psi}^{\ast}(n) }{n}+ O(\log (2d)).
\end{align}
By Lemma 4.4 of \cite{GSPret}, 
\begin{align} \label{eq:removeGCD}
\sum_{1 \leq n \leq N \atop (n,f) = 1} \frac{ \chi \bar{\psi}^{\ast}(n) }{n} = \prod_{p|f} \bigg( 1-\frac{ \chi \bar{\psi}^{\ast}(p) }{p} \bigg) \sum_{1 \leq n \leq N} \frac{ \chi \bar{\psi}^{\ast}(n) }{n} + O\bigg( (\log\log ( 2 + f ))^2 \bigg).
\end{align}
We next observe the identity
\[
\sum_{ fd = m/e \atop (e,f)=1} \frac{ \mu(f) \chi(d) }{d \phi(ef)}   \psi^{\ast}(f)   \prod_{p|f} \bigg( 1-\frac{ \chi \bar{\psi}^{\ast}(p) }{p} \bigg) 
= \frac{ \chi(e_q)\psi^{\ast}(m_q) \mu(m_q)}{\phi(m)}  \prod_{p|\tfrac me} (1-(\bar{\chi} \psi^{\ast} )(p)),
\]
where now $e_q$ is the largest divisor of $m/e$ which is coprime to $q$, and $m_qe_q=m/e$. The main terms from \eqref{eq:removeGCD} thus contribute
\begin{align}\label{eq: mainTerms}
\frac 2{\phi(m)}  \sum_{e|m} \mu(m_q) \chi(e_q)   \asum_{ \psi^{\ast}  \pmod{e} \atop \chi\psi^{\ast} (-1) = -1 } g( \psi^{\ast} )
  \psi^{\ast}(m_q \bar{b} )\prod_{p|\tfrac me} (1-( \bar{\chi} \psi^{\ast} )(p))
 \sum_{1 \leq n \leq N } \frac{ \chi \bar{\psi}^{\ast}(n) }{n}
 \end{align}
in \eqref{eq:decompCharSum}. By noting that $| g( \psi^{\ast} ) | = \sqrt{e}=\sqrt{m/df}$ the contribution of the error terms from \eqref{eq:removeTrunc} and \eqref{eq:removeGCD} in \eqref{eq:decompCharSum} is bounded by
\begin{align} \label{eq: errorTerms}
\ll \sqrt{m} \sum_{ fd | m } \frac{ ( \log 2f )  (\log 2d) }{d^{3/2} \phi(f)f^{1/2}  }    \ll \sqrt{m} \leq \sqrt{r_q} = o( (\log q)^{\Delta} ).
\end{align}

We now apply \eqref{eq: chsumbd} with $N = \min\{q, \tfrac 1{|m\alpha-b|}\}$. Set $\xi := \psi_1$, whose contribution,
\[
 \frac{2 \mu(m_q) \chi(\ell_q) \xi( \bar{b} m_q ) g( \xi ) }{ \phi(m) } 
\prod_{p| \tfrac m\ell } ( 1 - ( \bar{\chi} \xi )(p) )  \sum_{1 \leq n \leq N} \frac{ \chi \bar{\xi}(n) }{n}
\]
only appears in \eqref{eq: mainTerms} if the conductor $\ell$ of $\xi$ divides $m$. By \eqref{eq: chsumbd} the contributions to \eqref{eq: mainTerms} from all the characters $\psi\ne \psi_k$ for all $k < K$ is, for $\epsilon$ sufficiently small,
 \[
\ll (\log q)^{\epsilon}  \sum_{ e|m }  \frac {\sqrt{e}\, \tau(m/e)\phi(e)}{\phi(m)}   
\ll \sqrt{ m } (\log q)^{2\epsilon}\leq \sqrt{ r_q } (\log q)^{2\epsilon} \ll (\log q)^{1-\Delta+3\epsilon} =  o( (\log q)^{\Delta} ).
\]
Since the coefficient in front of each individual sum over $n$ in \eqref{eq: mainTerms} is bounded, again by \eqref{eq: chsumbd}
the contribution of the main terms from all of the characters $  \psi_k$ with $1<k<K$ is
$\ll_{\eps} K \cdot (\log q)^{\tfrac 2\pi+\eps} = o( (\log q)^{\Delta} )$, if $\eps$ is sufficiently small.
We insert these estimates into \eqref{eq: NewSum2} to obtain the result.
\end{proof}

The proof of Proposition \ref{prop:ChiLarge} is very similar to the proof of the main results in \cite{BrGr}.

\begin{proof} [Proof of Proposition \ref{prop: BoundsOnMchi}]
Let $\alpha \in [0,1)$ be chosen so that $M(\chi) = |S(\chi,\alpha q)|$. Applying \eqref{eq_PFE}, we have
\[
\frac{M(\chi)}{\sqrt{q}} = \frac{1}{ 2\pi} \bigg| \sum_{1 \leq |n| \leq q} \frac{  \chi(n) }{n} 
- \sum_{1 \leq |n| \leq q} \frac{ \chi(n) e(n\alpha ) }{n} \bigg| + O(1).
\]
Let $\xi := \psi_1$ once again, and let $\ell$ be its conductor. 
The proof is split up according to whether $\ell > 1$ or $\ell = 1$.

\noindent \textbf{Case 1:} Assume $\ell > 1$, so $\xi$ is non-trivial.
Suppose first that $| M(\chi) | \gg \sqrt{q} (\log q)^{\Delta}$. 
In light of \eqref{eq: minorArc}, $\alpha$ is on a major arc, so there is $\tfrac bm$ such that $m \leq r_q$ and $|\alpha-\tfrac bm| \leq \tfrac 1{mR_q}$, with $\ell | m$ by Proposition \ref{prop:ChiLarge}.
Note that if we vary $\alpha$ in the interval $\left[\tfrac bm - \tfrac 1{mR_q}, \tfrac bm + \tfrac 1{mR_q} \right]$ then $N = N(\alpha) = \min\{ q, \tfrac 1{|m\alpha-b|} \}$ varies in the range $R_q\leq N\leq q$. 
As $\ell > 1$, Proposition \ref{prop:ChiLarge} also shows that 
$\sum_{1 \leq |n| \leq q} \frac{ \chi(n) }n = o( (\log q)^{\Delta} )$,
and moreover, writing $m_q \ell_q = m/\ell$ as before,
\[
\frac{M(\chi)}{\sqrt{q}} = \frac{ \sqrt{\ell} }{ \pi \phi(m) }  
 \prod_{p|\tfrac m\ell} |1-( \bar{\chi} \xi)(p)| \cdot \bigg|
 \sum_{1 \leq n \leq N } \frac{ (\chi \bar{\xi})(n) }{n}  \bigg|
+o((\log q)^{\Delta})
\]
provided $\mu(m_q) \chi(\ell_q)  \xi(m_q)\ne 0$.
Evidently, to maximize the right-hand side we must have $N(\alpha)=N_q$.  

Next, we find $m = d\ell$, given $\xi$ and $N_q$, that maximizes 
\[
s_d:=\frac  1{\phi(d\ell)}  \prod_{p | d} | 1-(\bar{\chi} \xi)(p) |.
\] 
Suppose that $p^e\|d$ and $D=dp^e$. If $p|\ell$ then $s_{d}=s_{D}/p^e<s_D$ so we may assume that $p\nmid \ell$.
In that case $s_{d}\leq 2s_{D}/\phi(p^e)\leq s_D$ unless $p^e=2$. Hence  $d=1$ or $2$ and $\phi(d\ell)=\phi(\ell)$, and so 
\[
\frac{M(\chi)}{\sqrt{q}} = \frac{ \sqrt{ \ell } }{ \pi \phi(\ell) } \max\{ 1 , | 1 - ( \bar{\chi} \xi )(2) | \} \bigg| \sum_{ 1 \leq n \leq N_q } \frac{ (\chi \bar{\xi})(n) }{n} \bigg| 
+ o( (\log q)^{\Delta} ),
\]
which proves \eqref{eq: MchiAsymp} when $\ell>1$, and also that 
$| \sum_{n \leq N_q} \frac{ (\chi \bar{\xi})(n) }{n}  | \gg \frac{ \phi(\ell) }{ \sqrt{ \ell } } (\log q)^{ \Delta }$.

Conversely, assume that 
$\bigg| \sum_{n \leq N_q} \frac{ (\chi \bar{\psi})(n) }{n} \bigg| \gg \frac{ \phi(r) }{ \sqrt{r} } (\log q)^{ \Delta }$ 
for some primitive character $\psi$ of conductor $r \leq r_q$ with $\psi(-1) = -\chi(-1)$. 
In view of \eqref{eq: chsumbd}, it follows that $\psi = \xi$ and $r = \ell$. 
The assumption also implies that $\log N_q + O(1) \geq (\log q)^{\Delta}$, so $N_q \geq R_q$.

Selecting $\beta \in [ \tfrac 1\ell - \tfrac 1{\ell R_q} , \tfrac 1\ell + \tfrac 1{\ell R_q}]$ so that $N(\beta) = N_q \in [R_q,q]$ and 
applying Proposition \ref{prop:ChiLarge}, 
\begin{align} \label{eq: ellEqualsm} 
\bigg|\sum_{1 \leq |n| \leq q} \frac{\chi(n)e(n\beta)}{n}\bigg| 
=  \frac{2\sqrt{\ell}}{\phi(\ell)} \bigg|\sum_{1 \leq n \leq N_q} \frac{\chi\bar{\xi}(n)}{n}\bigg| + o( (\log q)^{\Delta} ) 
\gg (\log q)^{\Delta}.
\end{align}
Combining \eqref{eq_PFE} with \eqref{eq: ellEqualsm} and a second application of Proposition \ref{prop:ChiLarge}, we get
$$
M(\chi) \geq |S(\chi, \beta q)| 
\geq \frac{\sqrt{q}}{2\pi} \bigg|\sum_{1 \leq |n| \leq q} \frac{\chi(n)e(n\beta)}{n}\bigg| - \frac{ \sqrt{q} }{ 2\pi } \bigg| \sum_{1 \leq |n| \leq q} \frac{\chi(n)}{n} \bigg| + O( \log q )
\gg \sqrt{q} (\log q)^{\Delta},
$$
as required. 

\noindent \textbf{Case 2:} Assume now that $\ell = 1$ and $\xi$ is trivial so that $\chi$ is odd.
If $\alpha$ is on a minor arc then from \eqref{eq_PFE} and \eqref{eq: minorArc} we get 
\begin{align} \label{eq: minor}
\frac{ M(\chi) }{ \sqrt{q} } = \frac { 1- \chi(-1) }{ 2\pi }\bigg| \sum_{n \leq q} \frac{ \chi(n) }{n} \bigg| + o( (\log q)^{\Delta} ).
\end{align}
On the other hand, if $\alpha$ is on a major arc then by Proposition \ref{prop:ChiLarge},
\[
\frac{M(\chi)}{\sqrt{q}} =  \frac{ 1- \chi(-1) }{ 2\pi } \max_{\substack{ R_q\leq N\leq q \\ 1 \leq m \leq r_q}} \bigg| \sum_{n \leq q} \frac{ \chi(n) }{n} -    \frac{ \chi(\ell_q) \mu(m_q)}{\phi(m)}  \prod_{p|m} (1-\bar{\chi} (p)) \sum_{1 \leq n \leq N} \frac{ \chi(n) }{n} \bigg| + o( (\log q)^{\Delta} ).
\]
The coefficient of the sum up to $N$ is $\leq 2$ (which is attained if $m=2$ and $\chi(2)=-1$), so that by the triangle inequality
\begin{align} \label{eq: ell1UppBd}
\frac{M(\chi)}{\sqrt{q}} \leq \frac 3\pi \bigg| \sum_{n\leq N_q} \frac{\chi(n)}n \bigg| + o( (\log q)^{\Delta} ).
\end{align}
We obtain this as an equality when $\chi(2)=-1$ with $N_q=q$ and $m=2$.

By \eqref{eq: minor} and then by taking $m=1$ and $N=N_q$ above, we  obtain 
\begin{align} \label{eq: ell1LowBd}
\frac{M(\chi)}{\sqrt{q}} &\geq \frac 1\pi \max \bigg\{ \bigg| \sum_{n \leq q} \frac{ \chi(n) }{n} \bigg| , \bigg| \sum_{n \leq q} \frac{ \chi(n) }{n} -  \sum_{ n \leq N_q} \frac{ \chi(n) }{n} \bigg|  \bigg\} + o( (\log q)^{\Delta} ) \nonumber\\
&\geq \frac 1{2\pi} \bigg| \sum_{n\leq N_q} \frac{ \chi(n)}n \bigg| + o( (\log q)^{\Delta} ).
\end{align}
 \eqref{eq: ell1UppBd} and \eqref{eq: ell1LowBd} imply \eqref{eq: MchiAsymp}. Together these bounds also yield that $M(\chi) \gg \sqrt{q}(\log q)^{\Delta}$ if and only if $|\sum_{n \leq N_q} \tfrac{ \chi(n) }n| \gg (\log q)^{\Delta}$.
\end{proof}

 \begin{corollary}\label{cor: allLarge}
Let $\chi$ be a primitive character modulo $q$. Assume $M(\chi) \geq c_1 \sqrt{q}\log q$ and $|S(\chi,N)| \gg N$ for some $N \in [q^{c_2},q]$, with $c_1,c_2 \gg 1$. Then $|L(1,\chi)| \gg \log q$ , $\mb{D}(\chi,1;q) \ll 1$ and $\chi$ is odd.
\end{corollary}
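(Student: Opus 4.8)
The plan is to combine the two structural results already proved—Corollary \ref{cor: MandL} (equivalently the special case $\Delta\to 1$ of Proposition \ref{prop: BoundsOnMchi}) and Proposition \ref{prop: LandB}—and then run a ``pretentious triangle inequality'' argument to pin down the auxiliary character $\xi$ and the auxiliary frequency $t$. First I would apply Corollary \ref{cor: MandL}: since $M(\chi)\gg\sqrt q\log q$, there is a primitive character $\xi\pmod\ell$ with $\ell\ll 1$ and $\xi(-1)=-\chi(-1)$ such that $|L(1,\psi)|\gg\log q$, where $\psi$ is the primitive character inducing $\chi\bar\xi$; by \eqref{eq:L1Prod} this is equivalent to $\mb D(\psi,1;q)\ll 1$, hence (since $\ell\ll1$ changes the distance by $O(1)$) to $\mb D(\chi,\xi;q)\ll 1$, i.e.\ $\chi$ is $\xi$-pretentious. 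In particular $\chi$ is not real-exceptional, since an exceptional $\chi$ would have $|L(1+it,\chi)|=o(\log q)$ by Lemma \ref{lem: excepChi}(a), contradicting what we are about to derive. Next I would apply Proposition \ref{prop: LandB}: since $|S(\chi,N)|\gg N$ for some $N\in[q^{c_2},q]$, there is $t$ with $|t|\ll 1$ and $|L(1+it,\chi)|\gg\log q$, i.e.\ by \eqref{eq:L1Prod} $\mb D(\chi,n^{it};q)\ll 1$, so $\chi$ is $n^{it}$-pretentious.

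The heart of the argument is then to show $\xi=1$ and $t=O(1/\log q)$. By the triangle inequality \eqref{eq: triIneq},
\[
\mb D(\xi,n^{it};q)\le \mb D(\xi,\chi;q)+\mb D(\chi,n^{it};q)\ll 1,
\]
so $\xi$ is $n^{it}$-pretentious for the primes up to $q$, with $\ell\ll 1$ and $|t|\ll 1$. I would rule this out unless $\xi$ is trivial: if $\xi\ne 1$ then $\mb D(\xi,n^{it};q)^2\gg\log\log q$ because $\xi$ is a fixed non-principal character of bounded modulus, so $\xi(p)$ is, on average over $p$, equidistributed among the $\phi(\ell)$-th roots of unity (quantitatively, $\sum_{p\le x}\xi(p)p^{-it}/p=O_\ell(1)$ by the prime number theorem for the $L$-function $L(s+it,\xi)$, which is entire and zero-free near $\mathrm{Re}(s)=1$ since $\ell\ll1$ and $|t|\ll1$), whence $\mb D(\xi,n^{it};q)^2 = \log\log q + O_\ell(1)$, contradicting boundedness. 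Hence $\xi=1$, and then $\mb D(1,n^{it};q)\ll 1$ forces $|t|\ll 1/\log q$ by \eqref{eq: Dfornit}. Consequently $\psi=\chi$ (up to the induced/inducing subtlety, but $\ell=1$ now means $\chi\bar\xi=\chi$), so $|L(1,\chi)|=|L(1+it,\chi)|(1+o(1))$—more carefully, $\mb D(\chi,1;q)^2=\mb D(\chi,n^{it};q)^2+O(1)\ll1$ using the triangle inequality with $|t|\ll1/\log q$—and therefore $|L(1,\chi)|\asymp\log q\, e^{-\mb D(\chi,1;q)^2}\gg\log q$ by \eqref{eq:L1Prod}. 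Finally $\chi$ odd follows from $\xi(-1)=-\chi(-1)$ together with $\xi=1$, i.e.\ $1=\xi(-1)=-\chi(-1)$, so $\chi(-1)=-1$.

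The main obstacle is the step showing that a fixed non-principal $\xi$ of bounded conductor cannot be $n^{it}$-pretentious for $|t|\ll 1$: one must produce the lower bound $\mb D(\xi,n^{it};q)^2\ge(1-o(1))\log\log q$ uniformly over such $t$, which requires a zero-free region (or at least a quantitative $L$-function estimate) for $L(s,\xi)$ that is uniform in $t\in[-T,T]$. Since $\ell\ll1$ this is classical—$L(s,\xi)$ has no zeros on the line $\mathrm{Re}(s)=1$, and in a fixed modulus the Siegel zero issue is irrelevant because $\ell$ is bounded (Siegel's theorem gives an effective bound for each fixed $\ell$)—but some care is needed to make the estimate uniform in $|t|\le T$ rather than for a single $t$; the integration-by-parts computation in the proof of Lemma \ref{lem: 2/Pi} (the $S_d$ estimate with $k\nmid d$) is essentially exactly this computation and can be quoted. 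A secondary subtlety is bookkeeping the distinction between $\chi\bar\xi$ and the primitive character $\psi$ inducing it, but once $\ell=1$ this evaporates. I would also note that the hypothesis ``$c_1,c_2\gg1$'' should be read as ``$c_1,c_2$ are some fixed positive constants'', so all the implied constants above depend only on $c_1,c_2$, and the conclusion is that such a $\chi$ must be $1$-pretentious for the primes up to $q$ in the strong sense $\mb D(\chi,1;q)\ll 1$.
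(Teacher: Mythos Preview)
Your proposal is correct and follows essentially the same route as the paper: apply Corollary \ref{cor: MandL} and Proposition \ref{prop: LandB} to obtain $\mb D(\chi,\xi;q)\ll 1$ and $\mb D(\chi,n^{it};q)\ll 1$, use the triangle inequality \eqref{eq: triIneq} to get $\mb D(\xi,n^{it};q)\ll 1$, then rule out nontrivial $\xi$ via the prime-sum estimate $\sum_{p\le q}\xi(p)p^{-it}/p\ll 1$ (exactly the $S_d$ computation from Lemma \ref{lem: 2/Pi} you cite), and finally read off $\chi(-1)=-1$, $\mb D(\chi,1;q)\ll 1$, and $|L(1,\chi)|\gg\log q$. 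The only cosmetic differences are that the paper extracts the $\mb D$-bounds from the truncated sums via \eqref{eq:GS2b} rather than directly from \eqref{eq:L1Prod}, and that the paper deduces $|L(1,\chi)|\gg\log q$ first (from $\psi=\chi$) and then $\mb D(\chi,1;q)\ll 1$, whereas you go in the reverse order via $|t|\ll 1/\log q$; both orderings are valid.
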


\begin{proof}
Assume that $M(\chi) \geq c_1 \sqrt{q} \log q$ and $|S(\chi,N)| \gg N$ for some $N \in [q^{c_2},q]$. 
By Corollary \ref{cor: MandL} and Proposition \ref{prop: LandB}, there is $|t| \ll 1$ and $\ell \ll 1$ such that, simultaneously, 
\[
|L(1+it,\chi)| \gg \log q \text{ and } |L(1,\chi\bar{\xi})| \gg |L(1,\psi)| \gg \log q,
\]
where $\xi$ is primitive modulo $\ell$ and $\xi(-1) = -\chi(-1)$, and $\psi$ is the primitive character that induces $\chi\bar{\xi}$.
By \eqref{eq:L1Trunc},
\[
\bigg| \sum_{n \leq q} \frac { \chi(n) n^{-it} }{n} \bigg|, \bigg| \sum_{n \leq q} \frac{ \chi(n) \bar{\xi}(n) }{n} \bigg| \gg \log q.
\] 
Applying \eqref{eq:GS2b} of Proposition \ref{prop: logsum} we obtain $\mb{D}( \chi , n^{it} ; q) , \mb{D}( \chi , \xi ; q) \ll 1$,
so by \eqref{eq: triIneq}, 
\[
\mb{D}( \xi , n^{it} ; q) \leq \mb{D}( \chi , \xi ; q) + \mb{D}( \chi , n^{it} ; q) \ll 1.
\]
Therefore $\ell = 1$, else we let $Y := \exp( (\log (2\ell) )^{10} + |t| ) \ll 1$, and apply \eqref{eq:PNTAP} and partial summation as in the proof of Lemma \ref{lem: 2/Pi} to get
\[
\sum_{p \leq q} \frac{ \xi(p) p^{-it}}{p} = \sum_{Y < p \leq q} \frac{\xi(p)p^{-it}}{p} + O(\log\log Y) \ll 1,
\]
which implies that $\mb{D}(\xi, n^{it} ; q)^2 = \log\log q + O(1)$,   a contradiction. 

We deduce that $\xi$ is trivial so that $\chi(-1) =\chi\xi(-1) = -1$ and that  $|L(1,\chi)| \gg \log q$.
By Lemma \ref{lem: excepChi}, $\chi$ must be non-exceptional, so \eqref{eq:L1Prod} gives
$|L(1,\chi)| \asymp \log q \, e^{-\mb{D}(\chi,1;q)^2}$ and we deduce that $\mb{D}(\chi,1;q) \ll 1$. 
\end{proof}
 
%

\section{A class of examples}

\subsection{The set-up}
Let $g: \mb{R} \ra \mb{U}$ be a $1$-periodic function with $g(0) = 1$ and  Fourier expansion
\[
g(t) =  \sum_{n \in \mb{Z}} g_ne(nt)
\]
so that 
\[
g_n := \hat{g}(n):= \int_0^1 g(u) e(-nu) du \text{ for all integers } n,
\]   
and therefore $|g_n| \leq \int_0^1 |g(u)| du \leq 1$ for all $n$.
We will assume that $|g_n| \ll |n|^{-3}$ for all integers $n \neq 0$ (so that $\{g_n\}_n$ is absolutely summable).\footnote{The proof works provided  $|g_n| \ll 1/ |n|^{2+\epsilon}$ for all integers $n\ne 0$.}

Write $\gamma_0=g_0+1$ and $\gamma_n=g_n$ for all integers $n\ne 0$.
Then $\sum_{n \in \mb{Z}} \text{Re}(\gamma_n) = \sum_{n \in \mb{Z}} \text{Re}(g_n) +1=  \text{Re}(g(0))+1=2$ 
so that $\mu:=\max_n \text{Re}(\gamma_n)>0$.
Let $\mathcal L:=\{ \ell\in  \mb{Z}: \text{Re}(\gamma_\ell)=\mu\}$, which is a non-empty set, and finite as $|g_n| \ll |n|^{-3}$. Moreover
there exists $\delta>0$ such that $\text{Re}(g_n) \leq \mu-\delta$ for all $n\not\in \mathcal L$.
  
Fix $t \in (0,1]$. 
We define a multiplicative function $f=f_t: \mb{N} \ra \mb{U}$ at primes $p$ by
\begin{equation} \label{eq: ftPrime}
f_t(p) :=   g\bigg( \frac{ t \log p }{ 2\pi } \bigg)  \in \mb{U},
\end{equation}
and inductively on prime powers $p^m$, $m \geq 2$, via the convolution formula
\begin{align} \label{eq:convId}
f_t(p^m) :=   \frac 1{m} \sum_{1 \leq j \leq m} f_t( p^{ m-j } )  g\bigg( \frac{t \log p^j}{ 2\pi } \bigg).
\end{align}

Under these assumptions we will prove the following estimate:

\begin{theorem}\label{lem:asympForft}
Let $t\in [ -1 , 1]$ be such that $|t|$ is small but $|t| \gg (\log X)^{-\eps}$ for all $\eps > 0$. Then
$$
\sum_{n \leq X} \frac{f_t(n)}{n} = (1+O(|t|))  
\sum_{\ell \in \mathcal L} \frac{ X^{i\ell t} }{i  \ell' t} \frac{C_\ell   (it\log X)^{\gamma_\ell-1}}{\Gamma(\gamma_\ell)}
$$
where $C_\ell := \prod_{k \neq 0} k^{-g_{\ell-k}}$, and $\ell'=1$ if $\ell=0$ and $\ell'=\ell$ otherwise.
\end{theorem}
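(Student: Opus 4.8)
The plan is to study the Dirichlet series $F_t(s) = \sum_{n\geq 1} f_t(n)/n^s$ directly, using the convolution definition \eqref{eq:convId} to factor it as an Euler product whose logarithm is controlled by the Fourier expansion of $g$. By construction, $-\log(1-X) = \sum_{m\geq 1}X^m/m$ together with \eqref{eq:convId} gives $\log F_t(s) = \sum_p \sum_{m\geq 1} \tfrac1m g(\tfrac{tm\log p}{2\pi}) p^{-ms}$, so that, writing $g(\theta) = \sum_n g_n e(n\theta)$ and isolating the $m=1$ term, we get $\log F_t(s) = \sum_n g_n \sum_p p^{-(s - i n t)} + (\text{convergent for } \mathrm{Re}(s) > 1/2)$. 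Since $\sum_p p^{-w} = \log\zeta(w) + (\text{analytic for }\mathrm{Re}(w)>1/2)$, this exhibits $F_t(s) = H_t(s)\prod_n \zeta(s - int)^{g_n}$ where $H_t$ is analytic and nonzero in a neighbourhood of $\mathrm{Re}(s)\geq 1$ with controlled size. Recalling $\gamma_0 = g_0+1$, $\gamma_n = g_n$, the $n=0$ factor $\zeta(s)^{g_0}$ combined with the extra $\zeta(s)$ coming from the convolution with... — more precisely, the key point is that $f_t$ is \emph{not} quite $1\ast(\text{something})$, but $F_t(s)$ has the shape $H_t(s)\prod_{n\in\mathbb Z}\zeta(s-int)^{\gamma_n}$ up to a single $\zeta(s)$ bookkeeping factor; each factor $\zeta(s - int)^{\gamma_n}$ contributes a singularity at $s = 1 + int$ of strength $\gamma_n$, i.e.\ $\zeta(s-int)^{\gamma_n} \sim (s-1-int)^{-\gamma_n}$.

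Next I would recover $\sum_{n\leq X} f_t(n)/n$ from $F_t$ via a Perron-type / contour integral, or more cleanly via a Tauberian argument tailored to Dirichlet series with finitely many algebraic singularities on the $1$-line. Since $|t| \gg (\log X)^{-\varepsilon}$, the singularities $1 + int$ for $n \in \mathcal L$ are well-separated (at scale $|t| \gg (\log X)^{-\varepsilon}$, which beats the $1/\log X$ scale at which singularities interact), and all the non-dominant ones have $\mathrm{Re}(\gamma_n) \leq \mu - \delta$, hence contribute lower-order terms. Shifting the contour past $\mathrm{Re}(s) = 1$ and collecting the contribution of each branch point $s = 1 + i\ell t$, $\ell \in \mathcal L$, a Hankel-contour computation around each singularity produces a term of the form $\dfrac{X^{i\ell t}}{i\ell' t}\cdot\dfrac{C_\ell (it\log X)^{\gamma_\ell - 1}}{\Gamma(\gamma_\ell)}$: the factor $X^{i\ell t}/(i\ell' t)$ is the residual effect of integrating $X^{s}$ near $s = 1 + i\ell t$ (with $\ell' = \ell$ for $\ell\neq 0$ and $\ell'=1$ at the genuine pole $\ell = 0$), the $(it\log X)^{\gamma_\ell-1}/\Gamma(\gamma_\ell)$ is the standard Hankel integral $\tfrac1{2\pi i}\int X^s (s-1-i\ell t)^{-\gamma_\ell}\,ds$ after rescaling, and $C_\ell = \prod_{k\neq 0}k^{-g_{\ell-k}}$ is the value at $s = 1 + i\ell t$ of the regularized product $H_t(s)\prod_{n\neq\ell}\zeta(s-int)^{\gamma_n}$ of all the \emph{other} factors — here $\zeta(w)\sim 1/(w-1)$ near $w=1$ turns $\zeta((1+i\ell t) - int)^{g_{n}}$ into $(i(\ell-n)t)^{-g_n}$, and collecting these across $n = \ell - k$ gives $\prod_{k\neq 0}(ikt)^{-g_{\ell - k}}$; the factors of $it$ recombine with the $(it\log X)^{\gamma_\ell - 1}$, leaving exactly $C_\ell = \prod_{k\neq 0}k^{-g_{\ell-k}}$ after using $\sum_k g_{\ell-k}$-type normalizations, and the mismatch is absorbed into the $(1 + O(|t|))$.

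The $(1+O(|t|))$ error absorbs several things: the analytic-and-bounded prefactor $H_t(s)$ evaluated at $1 + i\ell t$ versus at $s=1$ differs by $O(|t|)$; replacing $\zeta$ by $1/(w-1)$ near each relevant point incurs $O(|t|)$ relative errors; and the contribution of the truncated/error part of the contour (the horizontal segments and the left vertical segment) is smaller by a power of $\log X$ because $\mathrm{Re}(\gamma_n)\leq \mu-\delta$ off $\mathcal L$, hence also $\ll |t|$ times the main term given the hypothesis $|t|\gg(\log X)^{-\varepsilon}$. I would also need the decay $|g_n|\ll|n|^{-3}$ (the footnote's $|n|^{-2-\epsilon}$ suffices) to guarantee the product $\prod_n\zeta(s-int)^{\gamma_n}$ converges and defines an analytic function with the claimed singularity structure, and to make $C_\ell = \prod_{k\neq 0}k^{-g_{\ell-k}}$ a convergent product.

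\textbf{Main obstacle.} The technical heart — and the step I expect to be most delicate — is making the contour shift rigorous: the function $\prod_n\zeta(s-int)^{\gamma_n}$ has infinitely many branch points accumulating along the whole line $\mathrm{Re}(s) = 1$ (at $1 + int$ for every $n$), so one cannot simply ``move the contour to $\mathrm{Re}(s) = 1 - c$''. Instead one must either (i) work with the finitely many ``dominant'' singularities $\ell\in\mathcal L$ and handle the rest by an $L^\infty$ bound on $\mathrm{Re}(s) = 1$ exploiting $\mathrm{Re}(\gamma_n)\leq\mu-\delta$, controlling the resulting integral against $X^1/\log X$-type quantities; or (ii) invoke a black-box Selberg–Delange / Tenenbaum-style theorem for Dirichlet series of the form $\zeta(s)^{z}G(s)$ iterated/twisted by $n^{it}$. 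Option (i) requires care near each $1 + int$ with $n\notin\mathcal L$ (the singularity is integrable since $\mathrm{Re}(\gamma_n) < 1$ when $\mu < 1$, but one must sum the contributions over all $n$ using $|g_n|\ll|n|^{-3}$), and uniformity in $t$ throughout is what forces the hypothesis $|t|\gg(\log X)^{-\varepsilon}$: below that scale the singularities $1 + i\ell t$ would merge at the resolution $1/\log X$ of the oscillatory kernel $X^s$ and the clean product-of-terms answer would break down. Everything else — the Euler product factorization, the Hankel integral evaluation, identifying $C_\ell$ — is bookkeeping once this analytic framework is in place.
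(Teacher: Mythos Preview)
Your outline is essentially the paper's approach: factor $F_t(s)=\prod_{m}\zeta(s-imt)^{g_m}$, apply Perron, deform the contour using Hankel loops around the branch points $s=1+int$, and extract the dominant contributions from $\ell\in\mathcal L$. Two clarifications are worth making.

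First, there is no analytic prefactor $H_t$. The convolution identity \eqref{eq:convId} is designed precisely so that $\Lambda_{f_t}(n)=g(\tfrac{t\log n}{2\pi})\Lambda(n)$ for \emph{all} prime powers, whence $\log F_t(s)=\sum_{m}g_m\log\zeta(s-imt)$ exactly, not just up to an analytic correction. Relatedly, the appearance of $\gamma_0=g_0+1$ (rather than $g_0$) in the final answer is not from an extra $\zeta(s)$ hidden in $F_t$, but from the factor $1/s$ in the Perron integrand: at the singularity $s=0$ the combination $\zeta(s+1)^{g_0}/s$ behaves like $s^{-\gamma_0}$, whereas at $s=int$ with $n\neq 0$ the $1/s$ contributes the explicit $1/(int)$ in the formula.

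Second, on your ``Main obstacle'': the paper does not follow your option (i) of singling out the dominant singularities and bounding the rest on $\mathrm{Re}(s)=1$, nor does it invoke Selberg--Delange. Instead it truncates the infinite product to $F_N(s)=\prod_{|m|\leq 2N}\zeta(s-imt)^{g_m}$ with $N\asymp (\log X)^A/|t|$ (the decay $|g_m|\ll |m|^{-3}$ makes $F-F_N$ negligible on $\mathrm{Re}(s)\geq 1+1/\log X$), and then threads a Hankel contour around \emph{every one} of the $2N+1$ singularities $int$, $|n|\leq N$, joining them by vertical segments on $\mathrm{Re}(s)=1-r_0$ inside the common zero-free region of all the shifted $\zeta$-factors. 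The technical lemma (Lemma \ref{lem: prodEst}) bounds $|F_N|$ uniformly on the connecting segments by $(\log X)^{\eps}$; each individual Hankel integral $\mc{M}_n$ is then evaluated, and the ones with $n\notin\mathcal L$ are shown to be smaller by the factor $(\log X)^{-\delta}$ coming from $\mathrm{Re}(\gamma_n)\leq\mu-\delta$. So the infinitude of singularities is handled by truncation, not by integrating through them.
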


One can make the weaker assumption that $|g_n| \ll 1/|n|^{1+\epsilon}$ for all integers $n \neq 0$, and obtain the weaker, but satisfactory, error term 
$O(|t|^{\epsilon/2})$ in place of $O(|t|)$.

Henceforth fix $t$ and use $f=f_t$. By \eqref{eq:convId} and induction on $m \geq 1$ we have
$$
|f(p^m)| \leq \frac{1}{m } \sum_{1 \leq j \leq m} |f(p^{m-j})|  \leq 1,
$$
so that $f$ indeed takes values in $\mb{U}$.
If $F(s)$ is the Dirichlet series of $f$ for $\text{Re}(s) > 1$ then $F(s)$ is 
analytic and non-vanishing in that half-plane, and so $-\tfrac {F'}{F}(s)$ is also analytic there. 
The convolution identity \eqref{eq:convId} implies that 
\[
-\frac{F'}F(s) = \sum_{n\geq 1} g\bigg( \frac{t \log n}{ 2\pi } \bigg)\frac{\Lambda(n)}{n^s} .
\]

Integrating $-\tfrac {F'}F (s)$ termwise, we see that when $\text{Re}(s) > 1$,
$$
\log F(s) = \sum_{n \geq 1} \frac{ \Lambda_f(n) }{ n^s \log n } 
= \sum_{p^k} \frac{ g( \tfrac { t \log p^k }{ 2\pi } )}{kp^{ks}} = \sum_{m \in \mb{Z}}  g_m \sum_{p^k} \frac{ p^{ ikmt } }{ kp^{ ks } } 
= \sum_{ m \in \mb{Z} } g_m \log \zeta(s-imt),
$$
swapping orders of summation using the absolute summability of $\{g_m\}_m$. For $\text{Re}(s) > 1$, we may thus write \[
F(s) = \prod_{m \in \mb{Z}} \zeta(s-imt)^{g_m}.
\]
We will work with the finite truncations of this product,
\[
F_N(s) := \prod_{|m| \leq 2N} \zeta(s-imt)^{g_m} .
\]


The proof of Theorem \ref{lem:asympForft} relies on a technical contour integration argument  complicated by the possibility that the zeros and poles of $\zeta(s-imt)$ might contribute  essential singularities whenever $g_m \neq 0$. 
The following key technical  lemma will be proved in section \ref{sec: zetas}.

For given $\tau \in \mb{R}$ we define
\[
\sg(\tau) := \frac{c}{\log(2+|\tau|)},
\]
where $c > 0$ is chosen sufficiently small so that $\zeta(\sg+i\tau) \neq 0$ whenever $\sg \geq 1-\sg(\tau)$.

\begin{lemma}\label{lem: prodEst}
Let $t\in [ -1 , 1]$ be such that $|t|$ is small but $|t| \gg (\log X)^{-\eps}$ for all $\eps > 0$. Fix $A \geq 2$, let $N := \lceil \tfrac {(\log X)^A}{|t|} \rceil$ and  $T := ( N + \tfrac 12 ) |t|$. Also let $r_0 := \tfrac 14 \min\{\sg(3T), |t|\}$. 

\noindent (a) If $s = \sg + i\tau$ with $\sg \geq \frac{1}{\log X}$ and $|\tau| \leq T$ then
\[
F(s+1) = F_N(s+1) + O((\log X)^{-2}).
\]
(b) We have
\[
\max_{|\tau| \leq T} |F_N(1-r_0 + i\tau) |\ll_{\eps} (\log X)^{\eps}.
\]
(c) Let $\eta \in \{-1,+1\}$. Then
\[
\max_{-r_0 \leq \sg \leq r_0}  |F_N(1+\sg + i\eta T) |   \ll_{\eps} (\log X)^{\eps}.
\]
(d) If $|t|$ is sufficiently small then  for any $\ell \in \mb{Z}$,
\[
\prod_{ \ss{ |k| \leq 2N \\ k \neq \ell} } \zeta(1-i(k-\ell)t)^{g_k} = (1+O(|t|)) C_\ell(it)^{g_\ell-1}.
\]
More generally when $|n| \leq N$ and $|s| \leq 2 r_0$,
\[
\prod_{ \ss{ |k| \leq 2N \\ k \neq n } } |\zeta(1+s - i(k-n)t)^{g_k}| \ll_{\eps} (\log X)^{\eps}.
\]
\end{lemma}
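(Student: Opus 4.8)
The plan is to estimate each of the four parts by reducing to standard facts about $\zeta(s)$ near the $1$-line, exploiting the assumption $|t| \gg (\log X)^{-\eps}$ to keep the number of factors $N \asymp (\log X)^A/|t|$ polynomially controlled and the decay $|g_k| \ll |k|^{-3}$ to make the infinite products converge. For part (a), I would write $F(s+1)/F_N(s+1) = \prod_{|m| > 2N} \zeta(s+1-imt)^{g_m}$ and take logarithms: $\sum_{|m|>2N} g_m \log\zeta(s+1-imt)$. For such $m$ one has $|mt| > 2N|t| \gg (\log X)^A$, so $\zeta$ is evaluated far up the critical strip where $\log\zeta(s+1-imt) \ll \log\log(|mt|)$ via the classical bound $\zeta(\sg+i\tau) \ll \log(2+|\tau|)$ for $\sg \geq 1 - c/\log(2+|\tau|)$; combined with $\sum_{|m|>2N}|g_m|\log\log|mt| \ll N^{-2}\log\log X$ and $N \gg (\log X)^{A-1}$ with $A \geq 2$, this is $O((\log X)^{-2})$, and exponentiating gives the claim (noting $F_N(s+1) \ll (\log X)^{O(1)}$ so the multiplicative error is also $O((\log X)^{-2})$).

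For parts (b) and (c) the key point is that for each factor $\zeta(1+s-imt)^{g_m}$ with $s$ in the stated ranges, the argument $1+s-imt$ stays at distance $\gg r_0 \asymp \min\{\sg(3T),|t|\}$ from the pole at $1$ whenever $m \neq 0$ (because $|mt| \geq |t| \gg r_0$ and $|\mathrm{Im}(s)| \leq T$ keeps us within the zero-free region so $\zeta$ is bounded by $\log(2+T) \ll \log\log X$ there), while the single factor $m=0$, namely $\zeta(1+s)^{g_0}$, contributes at most $|\zeta(1+s)|^{|g_0|} \ll (1/|s| + 1)^{|g_0|}$; in range (b) we have $|s| = |{-r_0}+i\tau|$ which could be as small as $r_0 \gg (\log X)^{-O(1)}$, giving $|\zeta(1+s)| \ll (\log X)^{O(1)}$, but taking $|g_0|$-th power and combining with $\prod_{m\neq 0}|\zeta(1+s-imt)|^{|g_m|} \ll \exp(C\sum_{m\neq 0}|g_m|\log\log X) \ll (\log X)^{o(1)}$ yields the bound $(\log X)^{\eps}$. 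Part (c) is similar but easier since $|\mathrm{Im}(s)| = T$ forces even the $m=0$ factor away from the pole. In both cases the $(\log X)^{\eps}$ slack absorbs the polynomial losses once $|t| \gg (\log X)^{-\eps}$ for every $\eps$.

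For part (d), I would isolate the factor $k=\ell$ (or $k=n$) and write $\prod_{k\neq \ell,|k|\leq 2N}\zeta(1-i(k-\ell)t)^{g_k}$, substituting $j = k-\ell$. Using $\zeta(1-ijt) = \frac{1}{-ijt} + \gamma + O(|jt|)$ (Laurent expansion at the pole) for small $|jt|$, and $\zeta(1-ijt) = 1 + O(2^{-|jt|}\cdot\text{something})$, more precisely $\log\zeta(1-ijt) \ll \log\log(2+|jt|)$ for large $|j|$, I would factor out the leading singular behaviour: $\prod_j \zeta(1-ijt)^{g_{\ell+j}} \approx (it)^{-1}\cdot(\text{correction})\cdot\prod_{j\neq 0}(\text{-}ijt)^{-g_{\ell+j}}$ — wait, rather the product over $j \neq 0$ of $(-ij)^{-g_{\ell+j}} \cdot t^{-\sum_{j\neq 0}g_{\ell+j}}$, and since $\sum_{j}g_{\ell+j}$ relates to $g(0)=1$ one extracts the stated $C_\ell(it)^{g_\ell - 1}$. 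The error $O(|t|)$ comes from the $O(|jt|)$ terms in the Laurent expansions summed against $|g_{\ell+j}| \ll |j|^{-3}$, which converges. The second, more general estimate follows the same decomposition but only requires the crude bound $|\zeta(1+s-i(k-n)t)| \ll \log\log X$ away from the pole plus the $k=n$ factor $|\zeta(1+s)|^{|g_n|}$ with $|s| \leq 2r_0 \ll |t|$, giving $(\log X)^{\eps}$ as before.

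The main obstacle will be part (d): carefully tracking the infinite product $\prod_{j\neq 0}(-ij)^{-g_{\ell+j}}$ and verifying it converges to exactly $C_\ell = \prod_{k\neq 0}k^{-g_{\ell-k}}$ with the right branch of the logarithm/power, and that the accumulated errors from the Laurent expansions and the tail $|j| > $ (some threshold) truly combine to $O(|t|)$ rather than something larger — this requires the full strength of $|g_n| \ll |n|^{-3}$ (or $|n|^{-2-\eps}$, as the footnote indicates, which would degrade the error to $O(|t|^{\eps/2})$). Parts (a)–(c) are essentially bookkeeping with the standard zero-free region and the convexity bound for $\zeta$, and should be routine once the ranges are pinned down.
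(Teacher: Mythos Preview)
Your overall strategy is right, and part (a) matches the paper's argument. But there is a recurring error in your treatment of (b), (c), and the second estimate in (d): you repeatedly assert that for $m$ away from the ``central'' index the factor $\zeta(1+s-imt)$ is bounded by $\log(2+T)\ll\log\log X$. This is false. In (b) we have $s=-r_0+i\tau$ with $\tau$ ranging over $[-T,T]$; whenever $\tau\approx mt$ for some $|m|\le 2N$ (which happens for many $m\ne 0$), the argument $1+s-imt$ sits at distance $\asymp r_0$ from the pole and $|\zeta|\asymp r_0^{-1}$, not $\log\log X$. The bound $|\zeta(\sg+i\tau)|\ll\log(2+|\tau|)$ in the zero-free region requires $|\tau|\gg 1$, i.e.\ it says nothing near the pole. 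The same issue bites in the second half of (d): for $k$ with $|k-n|$ small one only has $|s-i(k-n)t|\ge |t|/2$, whence $|\zeta|\ll |t|^{-1}$, not $\ll\log\log X$.

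The fix is what the paper does: rather than isolating a single dangerous factor, bound the whole product crudely by
\[
\prod_{|m|\le 2N}|\zeta(1+s-imt)|^{|g_m|}\le\Big(\max_{|m|\le 2N}|\zeta(1+s-imt)|\Big)^{G},\qquad G:=\sum_m|g_m|<\infty,
\]
and use that the maximum is $\ll r_0^{-1}\log T$, which is $\ll_\eps(\log X)^\eps$ precisely because $|t|\gg(\log X)^{-\eps}$. For (c) the paper refines this: since $T=(N+\tfrac12)|t|$, the factors with $|\sg+i(T-kt)|\le 1$ force $|k|\ge N/2$, so the ``close to pole'' contribution is $\exp\big(\log(1/|t|)\sum_{|k|\ge N/2}|g_k|\big)\ll 1$ by the decay of $|g_k|$ --- this is why (c) is not simply ``easier than (b)'' as you suggest, but uses a genuinely different observation. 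Your sketch of the first asymptotic in (d) is on the right track and close to the paper's, though you should write $[imt\,\zeta(1+imt)]^{g_{\ell-m}}=1+O(|g_{\ell-m}|\,|mt|)$ for $|mt|\le 1$ and control the tail as in (a) to land cleanly on $C_\ell(it)^{g_\ell-1}$.
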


\begin{proof}[Proof of Theorem \ref{lem:asympForft}]
Let $c_0 := \tfrac 1{\log X}, A=2$ and $N$ and $T$ be as in Lemma \ref{lem: prodEst} so that $T \geq (\log X)^2$. 
By a quantitative form of Perron's formula \cite[Cor. 2.4]{Ten}, we have
\[
\sum_{n \leq X} \frac{f(n)}{n} = \frac{1}{ 2\pi i } \int_{ (c_0) } F( s+1 ) \frac{ X^s }{s} ds
= \frac 1{ 2\pi i} \int_{ c_0 - iT }^{ c_0 + iT } F( s + 1 ) \frac{ X^{s} }{ s } ds 
+ O\bigg( \frac{1}{\log X} \bigg).
\]
 By Lemma \ref{lem: prodEst}(a),
\begin{align}\label{eq:truncPerr}
\sum_{n \leq X} \frac{f(n)}{n} 
= \frac 1{ 2\pi i} \int_{ c_0 - iT }^{ c_0 + iT } F_N( s + 1 ) \frac{ X^{s} }{ s } ds 
+ O\bigg( \frac{1}{\log X} \bigg).
\end{align}
We now deform the path $[c_0-iT, c_0+iT]$ into a contour intersecting with the critical strip within the common zero- and pole-free regions of $\{\zeta(s-int)\}_{|n| \leq 2N}$.
Since $|\text{Im}(s)-nt| \leq 3T$ we see that $\zeta(s+1-int) \neq 0$ for all $|n| \leq 2N$ and $|\text{Im}(s)| \leq T$ whenever $\text{Re}(s) \geq -\sg(3T)$. 

Let $\mc{H}$ denote the Hankel contour%
\footnote{That is, for $r := \frac{1}{\log X}$ the contour consisting of the circular segment $\{s \in \mb{C} : |s| = r, \text{arg}(s) \in (-\pi,\pi)\}$ (omitting the point $s = -r$) together with the lines $\text{Re}(s) \leq -r$ covered twice, once at argument $+\pi$ and once at argument $-\pi$, traversed counterclockwise.} \cite[p. 179]{Ten}
of radius $\tfrac 1{\log X}$, and let $r_0 := \tfrac 14 \min\{|t|,\sg(3T)\}$.
For each $|n| \leq N$ we write 
$$
\mc{H}_n := (\mc{H} + int) \cap \{\sg + i\tau \in \mb{C} : \sg \geq -r_0\}.
$$
%
%
We glue the paths $\{\mc{H}_n\}_{|n| \leq N}$ together and to the horizontal lines $[-r_0+iT,c_0 + iT]$ and $[-r_0-iT,c_0-iT]$ using the line segments 
\begin{align*}
L_n &:= (-r_0+in|t|, -r_0+i((n+1)|t|)) \text{ for } -N \leq n \leq N-1, \\
B_1 &:= [-r_0-iT, -r_0 -iN|t|), \quad\quad 
B_2 := (-r_0+iN|t|, -r_0+iT]
\end{align*}
Denote this concatenated path by $\Gamma_N$ and define the contour
$$
\Gamma := [c_0-iT,c_0+iT] \cup [c_0+iT,-r_0+iT] \cup \Gamma_N \cup [-r_0-iT,c_0-iT],
$$
traversed counterclockwise. 
Since $F_N(s+1)/s$ is analytic in the interior of the component cut out by $\Gamma$, the residue theorem implies that 
\begin{align}\label{eq:contour}
&\frac 1{ 2\pi i} \int_{ c_0 - iT }^{ c_0 + iT } F_N( s + 1 ) \frac{ X^{s} }{ s } ds = \mc{M} + \mc{R},
\end{align}
where $\mc{M} := \frac 1{2\pi i}  \sum_{ |n| \leq N }   \int_{ \mc{H}_n } \frac{ F_N(s+1) }{ s } X^s  ds$ is the contribution from the Hankel contours, and
\begin{align*}
\mc{R} &:= \frac 1{2\pi i}   \bigg(    \int_{B_1} \frac{ F_N(s+1) }{ s } X^s ds    +    \int_{B_2} \frac{ F_N(s+1) }{ s } X^s ds  +  \sum_{ -N \leq n \leq N-1 } \int_{ L_n } \frac{ F_N(s+1) }{ s } X^s ds   \bigg) \\
&- \frac 1{2\pi i}  \bigg( \int_{ -r_0 - iT }^{ c_0 - iT } \frac{ F_N(s+1) }{s} X^s  ds    -    \int_{ -r_0 + iT }^{ c_0 + iT } \frac{ F_N(s+1) }{ s } X^s ds   \bigg). 
\end{align*}
Along the segments $L_n$ and $B_j$, where $\text{Re}(s+1) = 1 - r_0$, we apply Lemma \ref{lem: prodEst}(b) to obtain
\begin{align*}
&\sum_{ -N \leq n \leq N-1 } \bigg| \int_{ L_n } \frac{ F_N(s+1) }{s} X^s ds \bigg| + \sum_{j = 1,2} \bigg| \int_{ B_j } \frac{ F_N(s+1) }{s} X^s ds \bigg|  \\
&\ll_{\eps} X^{ -r_0 } (\log X)^{ \eps } \bigg( \sum_{ |n| \leq N } \frac 1{r_0 + |nt|} + \frac{1}{T} \bigg) 
\ll \frac 1{\log X}.
\end{align*}
Along the horizontal segments we use Lemma \ref{lem: prodEst}(c) to give
\begin{align*}
\bigg|\int_{ -r_0 \pm iT }^{ c_0 \pm iT } \frac{ F_N(s+1) }{s} X^s ds \bigg| 
\ll_{\eps} \frac { (\log X)^{\eps} }T \ll \frac 1{\log X}.
\end{align*}
Thus, $\mc{R} \ll \tfrac 1{\log X}$,
 and it remains to treat $\mc{M}$. 
For each $|n| \leq N$ note that $\mc{H}_n = \mc{H}_0 + int$, and so by a change of variables,
\[
\mc{M}_n := \frac 1{ 2\pi  i } \int_{ \mc{H}_n } \frac{ F_N(s+1) }{s} X^s ds 
= \frac{ X^{ int } }{ 2\pi i } \int_{ \mc{H}_0 } \frac{ G_n(s) }{ s + int } \frac{ X^s }{ s^{ g_n } } ds,
\]
where we set
\[
G_n(s) := \left[ s \zeta( s+1 ) \right]^{ g_n } \prod_{ \ss{ |m| \leq 2N \\ m \neq n } } \zeta( s + 1 - i (m-n)t )^{ g_m }.
\] 
$G_n$ is analytic near $0$, and when $|s| \leq \tfrac 12 \min\{ |t| , \sg(3T) \} = 2r_0$ we can write
\[
G_0(s) = \sum_{ j \geq 0 } \mu_{0,j}(t) s^j, \quad\quad 
\frac{ G_n(s) }{ s + i nt } = \sum_{ j \geq 0 } \mu_{n,j}(t) s^j \ \  \text{ if } n \neq 0.
\]
The functions $\mu_{n,j}(t)$ are determined by Cauchy's integral formula as
\begin{equation}\label{eq:munj}
\mu_{n,j}(t) = \frac{1}{2\pi i} \int_{|s| = r} \frac{ G_n(s) }{ (s+int)^{ 1_{n \neq 0} } } \frac { ds }{ s^{j+1} }, \quad 0 < r \leq 2 r_0.
\end{equation}
Note in particular that
\[
\mu_{0,0}(t) = \prod_{ \ss{ |m| \leq 2N \\ m \neq 0 } } \zeta( 1-imt )^{ g_m }, \quad 
\mu_{n,0}(t) = \frac 1{ int } \prod_{ \ss{ |m| \leq 2N \\ m \neq n } } \zeta( 1 - i (m-n)t )^{ g_m } \text{ if } n \neq 0,
\]
while for $j \geq 1$ we take $r = 2r_0$ and apply Lemma \ref{lem: prodEst}(d) in \eqref{eq:munj}
to get\footnote{ Here and below, we repeatedly use the fact that if $z \in \mb{C} \backslash\{0\}$ then, choosing any appropriate branch of complex argument, we have 
$|z^{g_n}| = |z|^{\text{Re}(g_n)} \exp(-\text{Im}(g_n) \cdot \text{arg}(z)) \asymp |z|^{\text{Re}(g_n)}$ for all $n \in \mb{Z}$,
as $\sup_{n \in \mb{Z}} |g_n| \leq 1$ and $|\text{arg}(z)|$ is uniformly bounded.}
for all $|n| \leq N$,
\begin{align}\label{eq: muNj}
|\mu_{n,j}(t)| &\leq r^{-j} \max_{|s| = r}\frac{\prod_{\ss{|k| \leq 2N \\ k \neq n}} |\zeta(s+1-i(k-n)t)^{g_k}|}{|s+int|^{1_{n \neq 0}}} \ll_{\eps} \frac{ 2^j(\log X)^{\eps} }{ \min\{|t|,\sg(3T)\}^j (1_{n =0} +|nt|) }. 
\end{align}
Integrating over $\mc{H}_0$ (noting that $|s| \leq r/2$ for all $s \in \mc{H}_0$) and applying \cite[Cor. 0.18]{Ten}, when $n \neq 0$ we obtain
\begin{align*}
\mc{M}_n &= \frac{ X^{ int } \mu_{n,0}(t) }{ 2\pi i}  \int_{ \mc{H}_0 } X^s s^{-g_n}ds 
+ O_{\eps}\bigg( \frac{ (\log X)^{\eps} }{ |nt| \min\{ |t|, \sg(3T) \} }  \int_{ \mc{H}_0 } X^{ \text{Re}(s) } |s|^{ 1-\text{Re}(g_n) } |ds| \bigg)\\
&= \frac{ X^{int} \mu_{n,0}(t) }{ \Gamma( g_n ) } \bigg( (\log X)^{ g_n-1 } + O(X^{-r_0}) \bigg) \\
&+ O_{\eps} \bigg( \frac{ (\log X)^{2\eps} }{ |n|t } \bigg( \int_{ -r_0 }^{ -\tfrac 1{\log X} } |\sg|^{1-\text{Re}(g_n)} X^{\sg} d\sg 
+ (\log X)^{\text{Re}(g_n)-2}\bigg)\bigg)  \\
&= \frac{ X^{int} }{ int   } \frac{  (\log X)^{ \gamma_n-1 } }{ \Gamma(\gamma_n) } \bigg( \prod_{ \ss{ |k| \leq 2N \\ k \neq n } } \zeta( 1 - i(k-n)t )^{g_k} 
+ O_{\eps}\bigg( \frac 1{(\log X)^{1-\eps}} \bigg) \bigg)
\end{align*}
and similarly
\[
\mc{M}_0
= \frac{ (\log X)^{\gamma_0-1} }{ \Gamma(\gamma_0) } \bigg( \prod_{ \ss{ |k| \leq 2N \\ k \neq 0 } } \zeta(1-ikt)^{g_k} 
+ O_{\eps}\bigg( \frac 1{(\log X)^{1-\eps}} \bigg) \bigg).
\]
We next focus on the products of $\zeta$-values. 
When  $n = \ell \in \mc{L}$, Lemma \ref{lem: prodEst}(d) gives
\[
\prod_{ \ss{ |k| \leq 2N \\ k \neq \ell }} \zeta(1-i(k-\ell)t)^{g_k} = (1+O(|t|)) C_\ell(it)^{g_\ell-1}.
\]
We saw above that $\text{Re}(\gamma_n) \leq \mu - \delta$ for all $n\not\in \mathcal L$. 
Combining this with Lemma \ref{lem: prodEst}(d) and the estimates $|t|^{\text{Re}(g_\ell)-1} \geq 1$ (since $|g_\ell| \leq 1$ for all $\ell$) and $1/\Gamma(g_n) \ll 1$ uniformly (since $1/\Gamma$ is entire), \ when $\ell \notin \mc{L}$ we obtain
\begin{align*}
\sum_{ \ss{ |n| \leq N \\ n \neq \ell } } \prod_{ \ss{ |k| \leq 2N \\ k \neq n } }  | \zeta(1 - i(k-n)t )^{g_k}| \frac{ (\log X)^{\text{Re}(g_n)} }{ |\Gamma(g_n+1_{n = 0})| ( 1_{n = 0} + |nt|\log X ) } &\ll_{\eps} \frac{ (\log X)^{\mu-1-\delta + \eps} }{ |t| } \\
&\ll \frac{ (|t|\log X)^{\mu-1-\delta + \eps} }{ |t| }.
\end{align*}
Accounting for the error term for $\mc{M}_\ell$ and using $\delta < 1$, it follows that
\begin{align*}
\mc{M} &= \bigg(1+O(|t|)\bigg)  \sum_{\ell \in \mathcal L} \frac{ X^{i\ell t} }{i \ell' t}
\frac{C_\ell   (it\log X)^{\gamma_{\ell}-1}}{\Gamma(\gamma_\ell)}+ O_{\eps} \bigg( \frac{ (|t|\log X)^{\mu-1 + \eps} }{ |t| } \cdot \frac 1{ (\log X)^{\delta} } \bigg).
\end{align*}
Setting $\delta' := \tfrac 12 \delta$ and taking $\eps < \delta'$ we obtain
\begin{align*}
\mc{M} &= \bigg(1+ O\bigg( |t| + \frac{1}{(\log X)^{\delta'}} \bigg)\bigg)   \sum_{\ell \in \mathcal L} \frac{ X^{i\ell t} }{i\ell'  t}
\frac{C_\ell   (it\log X)^{\gamma_\ell-1}}{\Gamma(\gamma_\ell)} .
\end{align*}
The proof is completed upon combining this estimate with our bound for $\mc{R}$ in \eqref{eq:contour}, and then using \eqref{eq:truncPerr}.
\end{proof} 

The values of $f_t(p)$ at primes $p$ are crucial in obtaining the shape of the asymptotic formula in Theorem \ref{lem:asympForft}, not the 
 values $f_t(p^m)$ with $m \geq 2$  at prime powers, as the following Corollary shows:

\begin{corollary}\label{cor:genPrimPow}
Assume the hypotheses of Theorem \ref{lem:asympForft}. Let $f : \mb{N} \ra \mb{U}$ be a multiplicative function such that $f(p) = f_t(p)$ for all primes $p$, and define a multiplicative function $h$ so that $f := f_t \ast h$. 
Then
$$
\sum_{n \leq X} \frac{f(n)}{n} =  \sum_{\ell \in \mathcal L}  \frac{ X^{i\ell t} }{i\ell'  t} \frac{C_\ell   (it\log X)^{\gamma_\ell-1}}{\Gamma(\gamma_\ell)}(H(1+i \ell t) +O(|t|) + o(1))        \text{ where } H(s):=\sum_{n\geq 1} \frac{h(n)}{n^s};
$$
 moreover, for each $\ell \in \mc{L}$ we have $H(1+i \ell t)\ne 0$ unless $f(2^k)=-2^{ik \ell t}$ for all $k\geq 1$.
\end{corollary}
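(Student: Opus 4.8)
The plan is to deduce the asymptotic formula from Theorem \ref{lem:asympForft} by factoring the Dirichlet series $F_f(s):=\sum_{n\ge 1}f(n)n^{-s}$ as $F_f(s)=F_{f_t}(s)H(s)$ and re-running the contour argument of that theorem with the extra multiplier $H(s+1)$ inserted, and then to prove the non-vanishing clause by an elementary Euler-product computation. First I would record the structure of $h$. Since $f(p)=f_t(p)$ and $f=f_t\ast h$ we have $h(p)=0$ for every prime $p$, so $h$ is supported on squarefull integers, and comparing local Euler factors gives $|h(p^m)|\le 2^{m-1}$. Writing $H(s)=\prod_pH_p(s)$ with $H_p(s):=F_{f,p}(s)/F_{f_t,p}(s)$ (the ratio of the $p$-th Euler factors of $f$ and of $f_t$), the numerator and denominator agree through the $p^{-s}$-term, so $H_p(s)-1\ll p^{-2\text{Re}(s)}$ on $\{\text{Re}(s)\ge 1-r_0\}$ (with $r_0$ small, as in Lemma \ref{lem: prodEst}), using there $|F_{f_t,p}(s)|\ge 1-\sum_{m\ge 1}p^{-m\text{Re}(s)}\gg 1$ for $p\ge 3$ and $|F_{f,p}(s)|\le(1-p^{-\text{Re}(s)})^{-1}\ll 1$; hence $\prod_{p\ge 3}H_p(s)$ converges locally uniformly to a holomorphic function that is $\ll 1$ there.

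The delicate factor is the one at $p=2$, $H_2(s)=F_{f,2}(s)/F_{f_t,2}(s)$: I must show $|F_{f_t,2}(s)|\gg 1$ for $\text{Re}(s)\ge 1-r_0$, so that $H(s+1)$ is holomorphic and $\ll 1$ on the whole region swept out by the deformed contour in the proof of Theorem \ref{lem:asympForft}. A zero of $F_{f_t,2}$ with $\text{Re}(s)$ near $1$ would, through the forced (near-)equality in $|\sum_{m\ge 1}f_t(2^m)2^{-ms}|\ge 1$, entail $f_t(2^m)\approx -\bar{w}^{m}$ for all $m$ for some $|w|=1$, whence $|f_t(2)|\approx 1$ and $f_t(4)\approx -f_t(2)^2$; but the convolution identity \eqref{eq:convId} at $p=2$, $m=2$ gives $f_t(4)=\tfrac12\bigl(f_t(2)^2+g(\tfrac{2t\log 2}{2\pi})\bigr)\approx\tfrac12\bigl(f_t(2)^2+1\bigr)$ for $|t|$ small, and these are incompatible since $|g|\le 1$. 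Hence $|F_{f_t,2}(s)|\gg 1$ on $\text{Re}(s)\ge 1-r_0$, and together with $|F_{f,2}(s)|\ll 1$ this gives $|H(s+1)|\ll 1$, holomorphically, on that region. Now I would run the proof of Theorem \ref{lem:asympForft} verbatim with $F_N(s+1)$ replaced everywhere by $F_N(s+1)H(s+1)$. Since $H(s+1)$ is holomorphic and $O(1)$ on the region, the analogues of the bounds in Lemma \ref{lem: prodEst}(a)--(c) and the estimate $\mc{R}\ll 1/\log X$ for the non-Hankel part of the contour hold with the same exponents, while in each Hankel contribution $\mc{M}_n=\tfrac1{2\pi i}\int_{\mc{H}_n}F_N(s+1)H(s+1)X^s\tfrac{ds}{s}$ the factor $H(s+1)$, being holomorphic at the singularity $s=int$ of $F_N(s+1)$, contributes the constant $H(1+int)$ to leading order; the indices $n\notin\mc{L}$ remain of lower order exactly as before (using $|H(1+int)|\ll 1$). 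This produces
\[
\sum_{n\le X}\frac{f(n)}{n}=\bigl(1+O(|t|)+o(1)\bigr)\sum_{\ell\in\mc{L}}\frac{X^{i\ell t}}{i\ell' t}\,\frac{C_\ell\,(it\log X)^{\gamma_\ell-1}}{\Gamma(\gamma_\ell)}\,H(1+i\ell t),
\]
and distributing the prefactor (using $|H(1+i\ell t)|\ll 1$) yields the stated formula.

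For the non-vanishing clause, note that by the above $F_{f_t,p}(1+i\ell t)\ne 0$ for every $p$, and by $|f|\le 1$ we have $|F_{f,p}(1+i\ell t)|\ge 1-\sum_{m\ge 1}p^{-m}=1-\tfrac{1}{p-1}>0$ for $p\ge 3$; hence $H_p(1+i\ell t)\ne 0$ for all $p\ge 3$, and since $\sum_{p\ge 3}|H_p(1+i\ell t)-1|<\infty$ the product $\prod_{p\ge 3}H_p(1+i\ell t)$ is nonzero. Therefore $H(1+i\ell t)=0$ forces $F_{f,2}(1+i\ell t)=1+\sum_{m\ge 1}f(2^m)2^{-m(1+i\ell t)}=0$, and since $\sum_{m\ge 1}2^{-m}=1$, equality in the triangle inequality forces $f(2^m)2^{-m(1+i\ell t)}=-2^{-m}$ for every $m$, i.e.\ $f(2^k)=-2^{ik\ell t}$ for all $k\ge 1$. (In fact, since $f(2)=f_t(2)=g(t\log 2/(2\pi))$ is close to $g(0)=1\ne-2^{i\ell t}$ when $|t|$ is small, this exceptional configuration never occurs, so $H(1+i\ell t)\ne 0$.)

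The main obstacle is the previous paragraph: one must be confident that the somewhat intricate proof of Theorem \ref{lem:asympForft} really does tolerate the bounded holomorphic multiplier $H(s+1)$ along the entire deformed contour, and the only point where this is genuinely in doubt is the Euler factor at $2$, where the argument has to invoke the convolution identity \eqref{eq:convId} to exclude a zero of $F_{f_t,2}$ near the $1$-line. Everything else is routine bookkeeping, since $H(s+1)$ enters every estimate only through its uniform sup-norm on the region and enters the final answer only through the values $H(1+i\ell t)$.
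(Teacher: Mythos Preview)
Your approach is correct, but it takes a genuinely different route from the paper's. The paper does \emph{not} re-open the contour integral: it uses Theorem~\ref{lem:asympForft} as a black box via a Dirichlet hyperbola argument, writing
\[
\sum_{n\le X}\frac{f(n)}{n}=\sum_{b\le M}\frac{h(b)}{b}\sum_{a\le X/b}\frac{f_t(a)}{a}+O\Big(\log X\sum_{b>M}\frac{|h(b)|}{b}\Big),
\]
applying the asymptotic to the inner sum, and then recognising $\sum_{b\le M}h(b)b^{-1-i\ell t}(1-\tfrac{\log b}{\log X})^{\gamma_\ell-1}=H(1+i\ell t)+o(1)$. For this one needs $\sum_b|h(b)|/b<\infty$ with a quantitative tail, and since the crude bound $|h(2^m)|\le 2^{m-1}$ is \emph{not} summable at $p=2$, the bulk of the paper's proof is a somewhat intricate case analysis (using the convolution identity \eqref{eq:convId}) showing $|h(2^m)|\le\kappa\alpha^m$ for some $\alpha<2$. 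Your route sidesteps this coefficient estimate entirely by working with the ratio $H_2(s)=F_{f,2}(s)/F_{f_t,2}(s)$ directly; the two pieces of information (polynomial growth of $h(2^m)$ versus zero-freeness of $F_{f_t,2}$ in a strip) are of course equivalent via the radius of convergence of $\sum_m h(2^m)z^m$ in $z=2^{-s}$, so both proofs are really pivoting on the same obstruction at $p=2$.

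One remark: your near-equality/triangle-inequality argument for the zero-freeness of $F_{f_t,2}$ works, but it is considerably harder than necessary. From $F(s)=\prod_m\zeta(s-imt)^{g_m}$ one obtains the local factorisation
\[
F_{f_t,p}(s)=\prod_{m\in\mb{Z}}\bigl(1-p^{-(s-imt)}\bigr)^{-g_m}
\]
for $\text{Re}(s)>1$, and both sides continue analytically to $\text{Re}(s)>0$ (the left side since $|f_t(p^k)|\le 1$, the right since $|p^{-(s-imt)}|<1$ and $\sum_m|g_m|<\infty$), hence agree there. Each factor on the right is nonzero for $\text{Re}(s)>0$, and $|\log F_{f_t,2}(s)|\le\sum_m|g_m|\cdot\sup_{|w|\le 2^{-(1-r_0)}}|\log(1-w)|\ll G$, so $|F_{f_t,2}(s)|\asymp_G 1$ uniformly on $\text{Re}(s)\ge 1-r_0$. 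This one-line observation replaces your appeal to \eqref{eq:convId} and removes what you flagged as the main obstacle; the rest of your sketch (inserting the bounded holomorphic multiplier $H(s+1)$ into each estimate of Lemma~\ref{lem: prodEst} and into the Hankel expansions) then goes through exactly as you describe. Your treatment of the non-vanishing clause matches the paper's.
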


One expects that $\mathcal L$ typically contains just one element, $\{ \ell\}$,  and so an asymptotic is given by this formula for all large $X$ if
 $H(1+i \ell t)\ne 0$ 
(that is, if $f(2^k)\ne -2^{ik \ell t}$ for some $k\geq 1$).
If $\mathcal L$   contains more than one element first note that $H(1+i \ell t)=0$ for at most one value of $\ell$, so we have a sum of main terms of similar magnitude.  For $X\in [Z,Z^{1+o(1)}]$ we get a formula of the form $ ( \sum_{\ell \in \mathcal L} c_\ell X^{i\ell t} +o(1) )(\log Z)^{\mu -1}$ (where the $c_\ell$ depend on $Z$ but not $X$) and such a finite length trigonometric polynomial 
will have size $o(1)$ for a logarithmic measure $0$ set of $X$-values (that is for $X\in [Z,YZ]$ where $|t|\log Y\to \infty$ with $\log Y=o(\log Z)$).

\begin{proof} 
Each $h(p)=0$ so that $h(n)=0$ unless $n$ is powerful. Since each $|f_t(p^k)|, |f(p^k)|\leq 1$, we deduce by induction that $|h(p^k)|\leq 2^{k-1}$ for each $k\geq 2$. We begin by assuming that each $h(2^k)=h(3^k)=0$, and so if $(n,6)=1$ then $|h(n)|\leq n^\kappa$ where $\kappa=\frac{\log 2}{\log 5}$($<\frac 12$).

As $h(n)=0$ unless $n$ is powerful and $(n,6)=1$, we have
\[
\sum_{n>N } \frac{|h(n)|}n \ll N^{\kappa-\frac 12} \text{ and } \sum_{b\geq 1}\frac{ |h(b)|\log b} b<\infty.
\]
Select $A>0$ so that $A(\frac 12-\kappa)>2-\mu$. Then with $M := (\log X)^A$ we have $\sum_{b>M} \frac{ |h(b)| }{b} =o((\log X)^{\mu-2})$, and by Theorem \ref{lem:asympForft},

 \begin{align*}
\sum_{ n \leq X } \frac{ f(n) }{n} 
&= \sum_{ \ss{ ab \leq X \\ b \leq M } } \frac{ f_t(a) h(b) }{ ab } + \sum_{ \ss{ ab \leq X \\ b > M }} \frac{ f_t(a) h(b) }{ ab } \\
&= \sum_{ b \leq M } \frac{ h(b) }{b} \sum_{ a \leq X/b } \frac{ f_t(a) }{a} + O\bigg( \sum_{ a \leq X } \frac{1}{a} \sum_{ M < b \leq X/a } \frac{ |h(b)| }{b} \bigg) \\
&= ( 1 + O( |t| ) ) \sum_{\ell \in \mathcal L} \frac{ X^{i\ell t} }{i \ell'  t} \frac{C_\ell   (it\log X)^{ \gamma_\ell - 1}}{\Gamma(\gamma_\ell)}
  \sum_{ b \leq M } \frac{ h(b) }{ b^{1+i \ell t} } \bigg( 1 - \frac{ \log b }{ \log X } \bigg)^{\gamma_\ell-1} \\
&+  o((\log X)^{\mu-1}).
\end{align*}
The claimed formula follows since  
 \[
\sum_{ b \leq M } \frac{ h(b) }{ b^{1+i \ell t} } \bigg( 1 - \frac{ \log b }{ \log X } \bigg)^{\gamma_\ell - 1} = 
H(1+i \ell t) +O\bigg( \sum_{ b > M } \frac{ |h(b)| }{ b } + \frac{ 1}{ \log X } \sum_{ b \leq M } \frac{ |h(b)|\log b}{ b } \bigg)    = 
H(1+i \ell t) +o(1).
\]
Now suppose that $h(3^k)$ is not necessarily $0$.  The key issue is
\[
\sum_{n>N } \frac{|h(n)|}n = \sum_{k\geq 0} \frac{|h(3^k)|}{3^k} \sum_{\substack{n>N/3^k \\ (n,6)=1}} \frac{|h(n)|}n
\ll \sum_{k\geq 0} \frac{|h(3^k)|}{3^k} (N/3^k)^{\kappa-\frac 12}\leq N^{\kappa-\frac 12}\sum_{k\geq 0}   \frac{2^{k-1}}{(3^{\kappa+\frac 12})^k}\ll N^{\kappa-\frac 12}
\]
since $3^{\kappa+\frac 12}>2$.

Our assumptions guarantee that the sum for $H(s)$ converges on the $1$-line.  Is   $H(1+i \tau)\ne 0$ for $\tau \in \mb{R}$?  We see that the Euler factors converge on the $1$-line
and indeed  
\[
\bigg| \sum_{k\geq 0} \frac{h(p^k)}{p^{k(1+i \tau)}} \bigg| \geq 1-\sum_{k\geq 1} \frac{|h(p^k)|}{p^k}\geq 1-\sum_{k\geq 2} \frac{2^{k-1}}{p^k}=1-\frac{2}{p(p-2)}>0 
\]
for each prime $p\geq 3$.

Now suppose that $h(2^k)$ is not necessarily $0$.  The analogous argument works for any $h(.)$ for which there exists $\epsilon>0$ such that $|h(2^k)| \ll (2^k)^{1-\epsilon}$.  To establish this we first assume that each
$|f_t(2^k)|=1$ so write $f_t(2^k)=e(\theta_k)$  and $g(\frac{t \log 2^j}{ 2\pi }) = r_je(\gamma_j)$ with $0 \leq r_j \leq 1$. Then
\eqref{eq:convId} becomes $m\, e(\theta_m)  = \sum_{1 \leq j \leq m} r_je(\theta_{m-j}+\gamma_j)$. This implies that $r_j = 1$ and
$\theta_m=\theta_{m-j}+\gamma_j \ \mod 1$ for $1\leq j\leq m$.
Now $\theta_0=\gamma_0=0$ and so $\theta_m=\gamma_m=m\gamma_1\ \mod 1$. But then
$\sum_{k\geq 0} f_t(2^k)/2^{ks} = \sum_{k\geq 0} (e(\gamma_1)/2^s)^k = (1-e(\gamma_1)/2^s)^{-1}$ and so if $k\geq 1$ then
$h(2^k)=f(2^k)-e(\gamma_1)f(2^{k-1})$ and so each $|h(2^k)|\leq 2$.

Otherwise there exists a minimal $k\geq 1$ such that $|f_t(2^k)|<1$; let $\delta=1-|f_t(2^k)|\in (0,1]$.  Now select $\alpha>0$ for which
$\delta(\alpha-1)=\alpha^k(2-\alpha)$ so that $1<\alpha<2$. We claim that  
$|h(2^m)|\leq \kappa \alpha^m$ for all $m\geq 0$, where $\kappa:=\max_{0\leq m\leq k} |h(2^m)|/\alpha^m$. This is trivially true for $m\leq k$; otherwise for $m>k$ we have (as $h(1)=1, h(2)=0$)
\begin{align*}
|h(2^m)| &= \bigg| f(2^m)- \sum_{j=0}^{m-1} f_t(2^{m-j})h(2^j)\bigg|
\leq 2+\sum_{\substack{j=2 \\ j\ne m-k}}^{m-1} |h(2^j)| +(1-\delta) |h(2^{m-k})| \\
&\leq  \sum_{\substack{j=0 \\ j\ne m-k}}^{m-1} \kappa \alpha^j +(1-\delta) \kappa \alpha^{m-k}
< \kappa\alpha^m\bigg( \sum_{i\geq 1} \alpha^{-i} - \delta  \alpha^{-k}\bigg)=\kappa\alpha^m
\end{align*}
as $2\leq \kappa+\kappa \alpha$, by induction,  using the definition of $\alpha$.

Finally we wish to determine whether the Euler factor of  $H(1+i\ell t)$ at $2$ equals $0$.  This equals the 
Euler factor for $f$ at $1$ divided by the Euler factor for $f_t$ at $1$.  
Since each $|f_t(2^k)|\leq 1$ the denominator is bounded; since $  |f(2^k)|\leq 1$
we have  $\sum_{k\geq 0} \frac{f(2^k)}{2^{k(1+i\ell t)}}=0$ if and only if $f(2^k)=-2^{ik\ell t}$ for all $k\geq 1$.

\end{proof}

\subsection{Our specific example}
We now use Theorem \ref{lem:asympForft} to construct a multiplicative function $f$ satisfying the conclusion of Proposition \ref{prop:lambdaOpt}(b). 
We will use the auxiliary 1-periodic function
$$
g(u) = \frac{e(u)-\lambda}{|e(u)-\lambda|} = \frac{|e(u)-\lambda|}{e(-u)-\lambda} = \sum_{n \in \mb{Z}} g_ne(nu).
$$
We see that $g$ takes values on $S^1$ with $g(0) = 1$. We will verify the following properties of $\{g_n\}_n$ in the appendix (section \ref{sec:Fourier}), which shows that $g$ satisfies the assumptions required to apply Theorem \ref{lem:asympForft}.

\begin{lemma} \label{lem:FourierCoeffs}
For the $\{g_n\}_n$ defined just above we have:
\begin{enumerate}[(a)]
\item $g_n \in \mb{R}$ for all $n$,
\item $|g_n| \ll \ ( \tfrac{ 2\lambda }{ 1+\lambda^2 } )^{ |n| } \leq 0.99^{|n|}$ for all $n \in \mb{Z}$,
\item $g_{-n} < g_n$ for all $n \geq 1$, 
\item $g_1 = 0.7994 \dots$, and there is $\delta > 0$ such that $g_n \le g_1 -\delta$ or all $n \neq 0,1$, and
\item $g_0 < g_1-1$.
\end{enumerate}
\end{lemma}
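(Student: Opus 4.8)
The plan is to reduce everything to a handful of identities for the Fourier coefficients of $1/|e(\cdot)-\lambda|$ plus one honest numerical evaluation. First I would record the set-up. Since $\lambda<1$ we have $g(0)=(1-\lambda)/(1-\lambda)=1$. Because $\lambda\in\mb{R}$, $|e(u)-\lambda|^2=(1-\lambda e(u))(1-\lambda e(-u))$, so, with $a_k:=\binom{2k}{k}4^{-k}$,
\[
\frac1{|e(u)-\lambda|}=\phi(e(u))\,\overline{\phi(e(u))},\qquad \phi(z):=(1-\lambda z)^{-1/2}=\sum_{k\ge0}a_k\lambda^k z^k ,
\]
where $(a_k)_{k\ge0}$ is positive and, since $a_{k+1}/a_k=\tfrac{2k+1}{2k+2}<1$, strictly decreasing. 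Hence the Fourier coefficients $\beta_m:=\widehat{1/|e(\cdot)-\lambda|}(m)=\beta_{-m}$ equal $\sum_{j\ge0}a_{|m|+j}a_j\lambda^{|m|+2j}$, so they are positive, strictly decreasing in $|m|$ (because $\beta_m-\beta_{m+1}=\sum_j a_j\lambda^{m+2j}(a_{m+j}-\lambda a_{m+1+j})>0$ for $m\ge0$), and $\beta_m\le\lambda^{|m|}/(1-\lambda^2)$. Writing $g=(e(u)-\lambda)\cdot\tfrac1{|e(u)-\lambda|}$ gives $g_n=\beta_{|n-1|}-\lambda\beta_{|n|}$ for every $n\in\mb{Z}$, and comparing constant terms in $g(u)(e(-u)-\lambda)=|e(u)-\lambda|$ yields the key relation $g_1-\lambda g_0=\int_0^1|e(u)-\lambda|\,du=2-\lambda$, by the definition of $\lambda$.

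Then (a) follows from $\overline{g_n}=\int_0^1\overline{g(u)}\,e(nu)\,du=\int_0^1 g(-u)e(nu)\,du=g_n$, using $\overline{g(u)}=g(-u)$. For (b) I would combine $|\beta_m|\le\lambda^{|m|}/(1-\lambda^2)$ with $g_n=\beta_{|n-1|}-\lambda\beta_{|n|}$ to get $|g_n|\ll\lambda^{|n|}$, and note that $0<\lambda<1$ forces $\lambda<\tfrac{2\lambda}{1+\lambda^2}<0.99$ (alternatively, expanding $(1-\tfrac{2\lambda}{1+\lambda^2}\cos2\pi u)^{-1/2}$ in powers of $\tfrac{2\lambda}{1+\lambda^2}\cos2\pi u$ and reading off Fourier coefficients produces the base $\tfrac{2\lambda}{1+\lambda^2}$ directly). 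For (c), when $n\ge1$,
\[
g_n=\beta_{n-1}-\lambda\beta_n=\sum_{j\ge0}a_j\lambda^{n-1+2j}\bigl(a_{n-1+j}-\lambda^2 a_{n+j}\bigr)>0
\]
since $a_{n-1+j}>a_{n+j}>\lambda^2 a_{n+j}$, while for $m\ge0$,
\[
g_{-m}=\beta_{m+1}-\lambda\beta_m=\lambda\sum_{j\ge0}a_j\lambda^{m+2j}\bigl(a_{m+1+j}-a_{m+j}\bigr)<0 ,
\]
so $g_{-n}<0<g_n$ for every $n\ge1$. For (e), $g_1-g_0=(2-\lambda)-(1-\lambda)g_0$, so $g_0<g_1-1$ is equivalent to $(1-\lambda)(1-g_0)>0$, i.e.\ to $g_0<1$; and $g_0=\int_0^1 g$ with $|g|\equiv1$ and $g$ non-constant ($g(0)=1\ne-1=g(\tfrac12)$), hence $|g_0|<1$.

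Part (d) is where the real work lies. The imaginary part of $\int_0^1 g(u)e(-u)\,du$ vanishes by symmetry, so
\[
g_1=\int_0^1\frac{1-\lambda\cos2\pi u}{\sqrt{1+\lambda^2-2\lambda\cos2\pi u}}\,du ,
\]
which I would evaluate numerically — together with $\lambda=0.8221\ldots$, the root of $\int_0^1\sqrt{1+\lambda^2-2\lambda\cos2\pi u}\,du=2-\lambda$ — to obtain $g_1=0.7994\ldots$. For the uniform gap: $g_n<0<g_1$ when $n\le0$; for $n\ge2$, $g_n=\beta_{n-1}-\lambda\beta_n<\beta_{n-1}\le\beta_1$ because $(\beta_m)$ decreases; and finally $\beta_1<g_1$. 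Using $\beta_1=(\lambda g_1+g_0)/(1-\lambda^2)$ (from $g_0=\beta_1-\lambda\beta_0$ and $g_1=\beta_0-\lambda\beta_1$), the last inequality is equivalent to $g_0<(1-\lambda-\lambda^2)g_1$, which one checks from the numerical values. Then $\delta:=g_1-\beta_1>0$ gives $g_n\le g_1-\delta$ for all $n\ne0,1$.

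The main obstacle is precisely this last numerical point: the series representing $\beta_0-(1+\lambda)\beta_1=(1-\lambda^2)(g_1-\beta_1)$ is not term-by-term positive, so $\beta_1<g_1$ is not accessible by a crude bound and genuinely requires evaluating the two elliptic-type integrals (for $\lambda$ and for $g_1$) accurately enough; everything else is structural. I note also that the exponential decay furnished by (b) is far stronger than the $|g_n|\ll|n|^{-3}$ required to apply Theorem~\ref{lem:asympForft}, so $g$ meets all the hypotheses there.
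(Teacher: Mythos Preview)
Your proof is correct, and in several places cleaner than the paper's. Both arguments rest on the same decomposition: writing $1/|e(u)-\lambda|=\sum_m \beta_m e(mu)$ with $\beta_m>0$ (your $\beta_m$ is the paper's $h_m/\sqrt{1+\lambda^2}$), so that $g_n=\beta_{|n-1|}-\lambda\beta_{|n|}$. From there the routes diverge. For (b) you get the sharper bound $|g_n|\ll\lambda^{|n|}$ directly from $a_k\le 1$, whereas the paper expands $(1-\tfrac{2\lambda}{1+\lambda^2}\cos 2\pi u)^{-1/2}$ and works harder to obtain the weaker base $\tfrac{2\lambda}{1+\lambda^2}$ with an explicit constant. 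For (c) you prove the stronger sign pattern $g_n>0>g_{-n}$ via monotonicity of $(a_k)$, while the paper compares $h_{n+1}$ with $h_{n-1}$. For (e) your use of the defining identity $g_1-\lambda g_0=2-\lambda$ to reduce $g_0<g_1-1$ to the trivial $|g_0|<1$ is much slicker than the paper's explicit manipulation of $h_0-h_1$ (the paper records the equivalent identity $g_1-2=\lambda(g_0-1)$ elsewhere but does not exploit it here). For (d) you collapse everything to the single numerical check $\beta_1<g_1$, equivalently $g_0<(1-\lambda-\lambda^2)g_1$; the paper instead uses its explicit constant from (b) to dispose of $|n|\ge 10$ and then tabulates $g_2,\ldots,g_9$ individually by computer. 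Your route trades eight numerical evaluations for one, at the cost of that one inequality not being visible by inspection; the paper's route avoids any delicate numerical comparison but needs the cruder exponential bound with explicit constant.
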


 \begin{proof}[Deduction of Proposition \ref{prop:lambdaOpt}(b)]
Let $x$ be large. Let $t \in [\tfrac 1{ \log\log x } , 1]$ be small, and set $y_t := e^{\tfrac 1t}$ and $f = f_t$.
For small enough $t$, Theorem \ref{lem:asympForft} yields
\begin{equation}\label{eq:asympEstft}
\bigg| \sum_{ n \leq x } \frac{ f(n) }{n} \bigg| \asymp t^{-1}( t \log x )^{ g_1-1 } 
\asymp \log x \exp\bigg( (g_1-2) \sum_{ y_t < p \leq x } \frac 1p \bigg).
\end{equation}
Using the definition of $\lambda$,
\begin{align*}
2 - \lambda = \int_0^1 \frac{ |e(u)-\lambda| }{ e(-u) - \lambda } ( e(-u) - \lambda ) du 
= \int_0^1 g(u) e(-u) du - \lambda \int_0^1 g(u) du = g_1 - \lambda g_0,
\end{align*}
so that $g_1-2=\lambda (g_0-1)$. By partial summation and the prime number theorem we have 
\begin{align*}
\mb{D}(f,1;y_t,x)^2 = \sum_{ y_t < p \leq x }  \frac{ 1-\text{Re}( g( \tfrac t{2\pi} \log p) ) }{p} 
&= \text{Re}\bigg( \int_0^{1} \bigg( 1-g(u) \bigg) du \bigg) \log\bigg( \frac{ \log x }{ \log y_t } \bigg) + O(1) \\
&= (1-g_0) \sum_{y_t < p \leq x} \frac{1}{p} + O(1).
\end{align*}
Combining these last few observations we deduce that
\[
\bigg| \sum_{ n \leq x } \frac{ f(n) }n \bigg| 
\asymp \log x \exp\bigg( \lambda (g_0-1) \sum_{ y_t < p \leq x } \frac 1p \bigg) 
\asymp \log x \exp\bigg( -\lambda \ \mb{D}(f,1;y_t,x)^2 \bigg).
\]

Finally, since $g$ is Lipschitz and $g(0) = 1$,
\[
|g(\tfrac t{2\pi} \log p) - 1| \ll t\log p \text{ for all } p \leq y_t, 
\]
and so by Mertens' theorem,
\[
\mb{D}(f,1;y_t)^2 = \sum_{ p \leq y_t } \frac{ 1 - \text{Re}( g( \tfrac t{2\pi} \log p ) ) }{p} 
\ll t \sum_{ p \leq y_t } \frac{ \log p }{p} \ll 1. 
\]
Combining these last two estimates, \eqref{eq:GS25} follows. 
\end{proof}


\appendix 
\section{Auxiliary results towards Proposition \ref{prop:lambdaOpt}(b)}
We establish the technical lemmas \ref{lem:FourierCoeffs} and \ref{lem: prodEst}, used in the proofs of 
Proposition \ref{prop:lambdaOpt}(b) and Theorem \ref{lem:asympForft}.

\subsection{On the Fourier Coefficients of $g$} \label{sec:Fourier}
 \begin{proof}[Proof of Lemma \ref{lem:FourierCoeffs}]
(a) Since $g(u) = \bar{g}(-u)$, a term-by-term comparison of the Fourier series of each shows that $g_n = \bar{g}_n$ and thus $g_n \in \mb{R}$ for each $n \in \mb{Z}$.

\noindent (b) We will prove a numerically sharper bound, which will be used in part (d). 
Note that
$$
g(u)= \frac{ e(u) - \lambda }{ \sqrt{ 1+\lambda^2 } } \cdot \bigg( 1 - \frac{ \lambda }{ 1+\lambda^2 }( e(u) + e(-u) ) \bigg)^{ -1/2 }. 
$$
As $2\lambda < 0.99(1+\lambda^2)$, Taylor expanding the bracketed expression gives
\begin{align*}
g(u) &= \frac{ e(u) - \lambda }{ \sqrt{ 1+\lambda^2 } } \sum_{ j \geq 0 } \binom{ -1/2 }{j} (-1)^j \bigg( \frac{\lambda}{1+\lambda^2} \bigg)^j \sum_{ 0 \leq i \leq j } \binom{j}{i} e( (2i-j) u) \\
&= \frac{ e(u) - \lambda }{ \sqrt{ 1+\lambda^2 } } \sum_{ n \in \mb{Z} } e(nu) \sum_{ \ss{ j \geq |n| \\ 2|(n+j)} } \binom{j}{ (j+n)/2 } 2^{ -2j } \binom{ 2j }{ j } \bigg( \frac{ \lambda }{ 1+\lambda^2 } \bigg)^j \\
&= \sum_{ m \in \mb{Z} } e(mu) \frac{ h_{m-1} - \lambda h_m }{ \sqrt{ 1+\lambda^2 } },
\end{align*} 
where we have set
\begin{align} \label{eq: hnDef}
h_n := \sum_{ \ss{ j \geq |n| \\ 2 | (j+n) } }  2^{ -2j } \binom{2j}{j} \binom{j}{ (j+|n|)/2 } \bigg( \frac{ \lambda }{ 1+\lambda^2 } \bigg)^j 
= \sum_{l \geq 0} \binom{ |n| + 2l }{ |n|+l } \binom{ 2(|n|+2l) }{ |n|+2l } \bigg( \frac{ \lambda }{ 4(1+\lambda^2) } \bigg)^{ |n|+2l }.
\end{align}
It follows that
$$
g_n = \frac{ h_{n-1}-\lambda h_n }{ \sqrt{ 1+\lambda^2 } } \text{ for all } n \in \mb{Z}. 
$$
We use Stirling's approximation in the form $\sqrt{2\pi n} (n/e)^n \leq n! \leq e^{ \tfrac 1{12} } \sqrt{2\pi n} (n/e)^n$ for $n \in \mb{N}$ (see \cite{Rob}), and bound $\binom{j}{(j+|n|)/2}$ by a central binomial coefficient as $\binom{j}{(j+|n|)/2} \leq \frac{1}{2^{\nu}}\binom{j+\nu}{(j+\nu)/2}$, where $\nu = 1_{2\nmid j}$. When $n \neq 0$ this gives
\[
2^{-2j}\binom{2j}{j} \binom{j}{(j+|n|)/2} \leq  2^{-2j} \bigg( \tfrac { e^{ \tfrac 1{12} } }{ \sqrt{ \pi j } } 2^{2j} \bigg) \bigg( \tfrac { e^{ \tfrac 1{12} } } { 2^{\nu} \sqrt{ \pi (j+\nu)/2} } 2^{j + \nu } \bigg) \leq \frac{ e^{ \tfrac 16 } \sqrt{2} }{ \pi j } 2^j, \text{ if } 2|(j+n).
\] 
Setting $c := \tfrac { 2 \lambda }{ 1+\lambda^2}$, we find using the first expression for $h_n$ in \eqref{eq: hnDef} that
\[
0 \leq h_m \leq \sum_{ \ss{ j \geq |m| \\ 2|(j+m) } } \frac { e^{ \tfrac 16 } \sqrt{2} }{ \pi j }  \, 2^j \bigg( \frac{ \lambda }{ 1+\lambda^2 }\bigg)^j 
\leq \frac { e^{ \tfrac 16 } \sqrt{2} }{ ( 1-c^2 ) \pi } \, \frac{ c^{ |m| } }{  |m|  }, \quad m \neq 0.
\]
Using the second expression for $h_n$ in \eqref{eq: hnDef}, when $|n| \geq 1$ we get
\begin{align*}
h_{n-1} - \lambda h_n &= \sum_{ l \geq 0 } \binom{ |n|-1+2l }{ |n|-1+l } \binom{ 2(|n|-1+2l) }{ |n|-1+2l } \bigg( \frac{ \lambda }{ 4(1+\lambda^2) } \bigg)^{ |n|-1+2l } \bigg( 1 - \frac{ 2|n|-1+4l }{ 2(|n|+l) } \frac{ \lambda^2 }{ 1 + \lambda^2 } \bigg) \\
&\leq \bigg(1-\frac{2|n|-1}{2|n|} \frac{\lambda^2}{1+\lambda^2} \bigg) h_{n-1}.
\end{align*}
Together with the previous bound, it follows that when $|n|\geq 2$,
\begin{align} \label{eq: expDecgn}
|g_n| \leq \frac{ e^{ \tfrac 16 }\sqrt{2} }{ (1-c^2 ) \pi \sqrt{  1+\lambda^2  } } \bigg(1-\frac{2|n|-1}{2|n|} \frac{\lambda^2}{1+\lambda^2} \bigg) \frac{ c^{|n| - 1} }{  | n | - 1  },
\end{align}
and the bound $|g_n| \ll c^{|n|}$ immediately follows. 

\noindent (c) We deduce from \eqref{eq: hnDef} that $h_n = h_{-n}$ for any $n \geq 1$. Thus,
\begin{equation} \label{eq: gnPlusMinus}
\sqrt{ 1+\lambda^2 } g_n 
= \begin{cases} 
h_{ n-1 } - \lambda h_n  &\text{ if } n \geq 1,\\ 
h_{ |n|+1 } - \lambda h_{ |n| }   &\text{ if } n \leq 0.
\end{cases}
\end{equation}
To prove $g_{-n} < g_n$ for all $n \geq 1$ we need only show that $h_{n+1} < h_{n-1}$ for all $n \geq 1$. \\
To see this, note that
\begin{align*}
h_{n+1} &= \bigg( \frac{ \lambda }{ 2(1+\lambda^2) } \bigg)^2 \sum_{ l \geq 0 } \binom{ n-1+2l }{ n-1+l } \binom{ 2(n-1+2l) }{ n-1+2l } \bigg( \frac{ \lambda }{ 4(1+\lambda^2) } \bigg)^{ n-1+2l } \frac{ (2n+4l)^2-1 }{ (n+l+1)(n+l) } \\
&\leq 16\bigg( \frac{ \lambda }{ 2 (1+\lambda^2) } \bigg)^2 
\sum_{ l \geq 0 } \binom{ n-1+2l }{ n-1+l } \binom{ 2(n-1+2l) }{ n-1+2l } \bigg( \frac{ \lambda }{ 4(1+\lambda^2) } \bigg)^{ n-1+2l },
\end{align*}
and thus 
$h_{n+1} \leq \bigg(\frac{2\lambda}{1+\lambda^2}\bigg)^2h_{n-1} < h_{n-1}$, 
as required. 

\noindent (d) Since $g_{-1} < g_1$, and $g_{-n} < g_n$  for $n \geq 2$ by (c), it is enough to show that there is $\delta > 0$ such that $g_n \leq g_1-\delta$ for all $n \geq 2$. 

The upper bound \eqref{eq: expDecgn} is decreasing with $n$, and one may verify that it gives $\leq 0.7 < g_1 - \tfrac 1{20}$ for $n = 10$. 
Thus, we clearly have $g_n \leq g_1 - \tfrac 1{20}$ for all $n \geq 10$. 

On the other hand, a computer-assisted calculation shows that $g_n \leq g_1 - \tfrac 14$, say, for all $2 \leq n \leq 9$:
\begin{align*}
&g_1 = 0.7994..., \quad g_2 = 0.2848..., \quad g_3 = 0.1659... \quad g_4 = 0.1102..., \quad g_5 = 0.0778..., \quad \\
&g_6 = 0.0568..., \quad g_7 = 0.0423..., \quad g_8 = 0.0321..., \quad g_9 = 0.0246....
\end{align*}
The claim thus follows with $\delta = \tfrac 1{20}$. 

\noindent (e)  Though this may be verified by computer, we give a proof. From \eqref{eq: gnPlusMinus},
$$
g_1 - 1 - g_0 = \frac 1{ \sqrt{ 1+\lambda^2 } } (h_0 - \lambda h_1 - (h_1 - \lambda h_0) ) - 1 = \frac{ 1+\lambda }{ \sqrt{1+\lambda^2} } (h_0-h_1) - 1.
$$
It is enough to show that $h_0 - h_1 > \frac{ \sqrt{1+\lambda^2} }{ 1+\lambda }$. 
To see this, we write
$$
h_0 - h_1 = \sum_{ l \geq 0 } 2^{ -2l } \binom{ 2l }{l} \binom{ 4l }{ 2l } \bigg( \frac{ \lambda }{ 1+\lambda^2 } \bigg)^l \bigg( 1 - \frac{ \lambda }{ 2(1+\lambda^2) } \frac{ 4l+1 }{ l+1 } \bigg) \geq 1-\frac{ \lambda }{ 2(1+\lambda^2) },
$$
using $\tfrac {\lambda}{2(1+\lambda^2)} \frac{4l+1}{l+1} \leq \tfrac {2\lambda}{1+\lambda^2} < 1$ and positivity to restrict to the term $l = 0$. But we see that
$$
1-\frac{\lambda}{2(1+\lambda^2)} > \bigg(1-\frac{2\lambda}{(1+\lambda)^2}\bigg)^{1/2} = \frac{\sqrt{1+\lambda^2}}{1+\lambda},
$$
since, setting $t = \frac{2\lambda}{(1+\lambda)^2}$, we have
$$
1-\frac{\lambda}{2(1+\lambda^2)} = 1-\frac{t}{4} \cdot \bigg(1+\frac{2\lambda}{1+\lambda^2}\bigg) \geq 1-\frac{t}{2} > (1-t)^{1/2},
$$
as required.
\end{proof}

\subsection{On products of shifted zeta functions} \label{sec: zetas}
\begin{proof}[Proof of Lemma \ref{lem: prodEst}] Throughout, set $G := \sum_{n \in \mb{Z}} |g_n| < \infty$ (since we assumed that $|g_n|\ll 1/(1+|n|)^3$) and $F_N(s) := \prod_{|n| \leq 2N} \zeta(s-int)^{g_n}$.

\noindent (a) Let $\sg \geq \tfrac 1{\log X}$. Since $|f(n)| \leq 1$, 
\[
|F(1+\sg+i\tau)| \leq \zeta(1+\sg) \ll \log X \text{ for all } |\tau| \leq T.
\]
When $|n| > 2N$, $|\tau-nt| \geq (2N+1)|t| - T \geq T$, so that also
\[
\zeta(1 + \sg + i(\tau-nt)) \ll \log(2+|\tau-nt|) \ll \log(2|n|) \text{ for all } |\tau| \leq T.
\]
Now $|\zeta(2+2\sigma)/\zeta(1+\sigma)|<|\zeta(1+\sg+i(\tau-nt))|<|\zeta(1+\sigma)|$ and so if $|g_n|<\frac 1{\log\log X}
$ then  
\begin{align}\label{eq: TaylorExp}
\zeta(1+\sg+i(\tau-nt))^{-g_n} = 1 + O(g_n \log\log X).    
\end{align}
This holds for $|n| > 2N$ as we assumed that $|g_n| \ll |n|^{-3}$.
Since $N \geq (\log X)^A$ for some $A \geq 2$ it follows that
\begin{align}
&\max_{|\tau| \leq T}| F_N(1+\sg+i\tau) - F(1+\sg+i\tau) | \nonumber \\
&\ll \max_{|\tau| \leq T} \bigg( |F(1+\sg+i\tau)| \cdot \bigg| \exp \bigg( \sum_{ |n| > 2N } \log \bigg| \zeta( 1+\sg + i (\tau-nt) )^{-g_n} \bigg| \bigg) - 1 \bigg| \bigg) \nonumber \\
&\ll (\log X) \sum_{n > 2N} |n|^{-3} \log \log (2n) \ll_{\epsilon} (\log X)N^{-2+\epsilon} \ll (\log X)^{-2}, \label{eq:FTrunc}
\end{align}
as required.

\noindent (b) Let $|\tau| \leq T$ and $|n| \leq 2N$, and put $\sg := 1-r_0$.
If $|\sg-1+ i(\tau-nt)| \leq 1$ then 
\[
|\zeta( \sg + i(\tau-nt))|^{\pm 1} \ll |\sg-1 + i(\tau-nt)|^{-1} \leq r_0^{-1}.
\]
Otherwise, $|\zeta( \sg + i(\tau-nt) ) |^{\pm 1} \ll \log(2+|\tau-nt|) \ll \log T$.
Thus, for any $|\tau| \leq T$,
\begin{align*}
| F_N( \sg + i\tau) |
\ll \prod_{ \ss{ |n| \leq 2N \\ |-r_0 + i(\tau-nt)| \leq 1 } } r_0^{-|g_n|} \cdot \prod_{ \ss{ |n| \leq 2N \\ |-r_0 + i(\tau-nt)| > 1 } } (\log T)^{|g_n|} 
\ll (r_0^{-1} \log T)^G.
\end{align*}
Since $\min\{\sg(3T),|t|\}^{-1} \log T \ll_{\eps} (\log X)^{\eps}$, the claim follows.

\noindent (c) Assume $\eta = +1$; the claim with $\eta = -1$ is completely analogous.
Note that $|T-kt| \geq |t|/2$ for all $|k| \leq 2N$. If $|\sg + i(T-kt)| \leq 1$ then $|kt| \geq T/2\geq N|t|/2$, i.e., $|k| \geq N/2$, and then
\[
|\zeta(1+\sg+i(T-kt))^{g_k}| \ll |T-kt|^{-|g_k|} \ll |t|^{-|g_k|}.
\]
Splitting the product as in (b) and using $|g_n|\ll 1/(1+|n|)^3$ for each $|\sg| \leq r_0$ we get
\begin{align*}
|F_N( 1 + \sg + iT ) |
&\ll \prod_{ |n| \geq \tfrac N2 } |t|^{-|g_n|} \cdot \prod_{ \ss{ |n| \leq 2N \\ |\sg + i(T-nt)| > 1 } } (\log T)^{|g_n|} \\
&\ll  \exp\bigg( \log( \tfrac 1{|t|} ) \sum_{ |n| \geq \tfrac N2 } |n|^{-3} \bigg) (\log T)^G \ll (\log \log x)^G,
\end{align*}
and the claim follows.

\noindent (d) Observe that if $|m| \leq |t|^{-1}$ then $imt \zeta(1+imt) = 1+O(|mt|)$; 
otherwise, if $|mt| \geq 1$ then when $|t|$ is small enough $m$ is large and we may Taylor expand 
\[
[imt\zeta(1+imt)]^{g_{\ell-m}} = 1 + O\bigg( |g_{\ell-m}| \log (2+|mt|) \bigg),
\]
similarly to \eqref{eq: TaylorExp}. 
Thus, since $|g_{\ell-m}| \ll (1+|m-\ell|)^{-3}$   we have
\begin{align*}
&\bigg| \prod_{m \neq 0} [ imt \zeta(1+imt) ]^{ g_{ \ell-m } } - 1 \bigg|  \\
&\ll |t|\sum_{ |m| \leq |t|^{-1} } |g_{\ell-m}||m| + \sum_{ |m| > |t|^{-1} } |g_{\ell-m}| \log( 2+|mt| )  \\
&\ll |t|\sum_{|m| \leq |t|^{-1}} (1+|m-\ell|)^{-2} + \sum_{|m| > |t|^{-1}} \frac{ \log(2+|m|) }{ (1 + |m-\ell| )^3 } 
\ll |t|.
\end{align*}
Since also $|t| \geq \tfrac 1{\log X},$ it follows that (handling the range $|k| > 2N$ as in (a))
\begin{align*}
\prod_{ \ss{ |k| \leq 2N \\ k \neq \ell } } \zeta( 1- i(k-\ell)t )^{g_k} 
&= \bigg( 1+ O\bigg( \frac{1}{ (\log X)^{2} } \bigg) \bigg) \prod_{ m \neq 0 } (imt)^{ -g_{ \ell-m } } \prod_{ n \neq 0 } [ int \zeta(1+int) ]^{ g_{ \ell-n }} \\
&= \bigg( 1 + O( |t| ) \bigg) C (it)^{g_\ell-1}
\end{align*}
since $\sum_{m \in \mb{Z}} g_m = g(0) = 1$, and the first claim is proved. \\
Suppose more generally that $|n| \leq N$, and that $|s| \leq \tfrac 12 \min\{ |t|, \sg(3T) \} = 2r_0$. 
Whenever $k \neq n$ we have $|s-i(k-n)t| \geq |t|/2$.
Thus, arguing as in (c),
\[
\prod_{ \ss{ |k| \leq 2N \\ k \neq n } } |\zeta(s +1 - i(k-n)t)^{g_k}| 
\ll \prod_{ \ss{ |k| \leq 2N \\ k \neq n \\ |s-i(k-n)t| \leq 1 } } |t|^{-|g_k|} \prod_{ \ss{ |k| \leq 2N \\ k \neq n \\ |s-i(k-n)t| > 1 } } (\log T)^{|g_k|} 
\ll (|t|^{-1} \log T)^G,
\]
and since $|t| \gg (\log X)^{-\eps}$ for any $\eps > 0$ the claim follows.
\end{proof}

\bibliographystyle{plain}

\end{document}